\newtheorem{theorem}{Theorem}[section]
\newtheorem{definition}[theorem]{Definition}
\newtheorem{remark}[theorem]{Remark}
\newtheorem{conjecture}[theorem]{Conjecture}
\newtheorem{proposition}[theorem]{Proposition}
\newtheorem{corollary}[theorem]{Corollary}
\newtheorem{lemma}[theorem]{Lemma}
\newtheorem{example}[theorem]{Example}
\def\Q{\mathbb{Q}}
\def\F{\mathbb{F}}
\def\Z{\mathbb{Z}}
\def\C{\mathbb{C}}
\def\G{\mathbb{G}}
\def\E{\mathcal{E}}
\def\Fc{\mathcal{F}}
\def\I{\mathcal{I}}
\def\M{\mathcal{M}}
\def\Ol{\mathcal{O}}
\def\V{\mathcal{V}}
\def\Bun{\mathrm{Bun}}
\def\Fil{\mathrm{Fil}}
\def\Hom{\mathrm{Hom}}
\def\Lie{\mathrm{Lie}}
\def\Mod{\mathrm{Mod}}
\def\GL{\mathrm{GL}}
\def\Gal{\mathrm{Gal}}
\def\Proj{\mathrm{Proj}}
\def\Rep{\mathrm{Rep}}
\def\Spa{\mathrm{Spa}}
\def\Spec{\mathrm{Spec}}
\def\Vect{\mathrm{Vect}}
\def\deg{\mathrm{deg}}
\def\lan{\langle}
\def\ran{\rangle}
\def\lra{\longrightarrow}
\def\ra{\rightarrow}
\def\ov{\overline}
\def\wt{\widetilde}
\def\st{\stackrel}
\def\tr{\textrm}
\def\phmod{\varphi\mathrm{-mod}}
\def\Fb{\breve{F}}
\def\Fbar{\overline{F}}
\def\a{\alpha}
\def\gc{\gamma^\vee}
\def\Phic{\Phi^\vee}
\def\iso{\xrightarrow{\ \sim\ }}
\def\DD{\mathbb{D}}
\def\ph{\varphi}
\def\drt{\rightarrow}
\def\Fun{F^{\mathrm{un}}}
\def\s{\sigma}
\def\Gr{\mathrm{Gr}}
\def\BdR{\mathrm{B_{dR}}}
\def\BpdR{\mathrm{B_{dR}^+}}
\def\TT{\boldsymbol{T}}
\def\AA{\boldsymbol{A}}
\def\e{\epsilon}
\def\et{\text{\'et}}
\begin{document}

\title{On the structure of some p-adic period domains}
\author{Miaofen Chen, Laurent Fargues and Xu Shen}
\date{}
\address{Department of Mathematics\\
	Shanghai Key Laboratory of PMMP\\
	East China Normal University\\
	No. 500, Dong Chuan Road\\
	Shanghai 200241, China}\email{mfchen@math.ecnu.edu.cn}
\address{CNRS\\
	Institut de Math\'ematiques de Jussieu\\
	4 place Jussieu\\
	 Paris 75252, France}\email{laurent.fargues@imj-prg.fr}

\address{Morningside Center of Mathematics, Academy of Mathematics and Systems Science, Chinese Academy of Sciences\\
	No. 55, Zhongguancun East Road\\
	Beijing 100190, China}\email{shen@math.ac.cn}
	
\thanks{Miaofen Chen was partially supported by NSFC grant No.11671136 and STCSM grant  No.13dz2260400. 
Laurent Fargues was partially supported by ANR grant  ANR-14-CE25-0002 "PerCoLaTor"  and ERC Advanced grant  742608 "GeoLocLang". 
Xu Shen was partially supported by the Chinese Academy of Sciences grants 50Y64198900 and 29Y64200900. }

\renewcommand\thefootnote{}
\footnotetext{2010 Mathematics Subject Classification. Primary: 11G18; Secondary: 14G20.}

\renewcommand{\thefootnote}{\arabic{footnote}}

\begin{abstract}
	We prove the Fargues-Rapoport conjecture for $p$-adic period domains: for a reductive group $G$ over a $p$-adic field and a minuscule cocharacter $\mu$ of $G$, the weakly admissible locus coincides with the admissible one if and only if the Kottwitz set $B(G,\mu)$ is fully Hodge-Newton decomposable.
\end{abstract}

\maketitle
\setcounter{tocdepth}{1}
\tableofcontents

\section*{Introduction}
Let $F$ be a finite degree extension of $\Q_p$,
$G$  a connected reductive group over $F$, and $\{\mu\}$  the geometric conjugacy class of a minuscule cocharacter  $\mu$ of $G$. Attached to the datum $(G, \{\mu\})$, we have the flag variety $\Fc(G,\mu)$ defined over the reflex field  $E=E(G,\{\mu\})$, the field of definition of $\{\mu\}$, a finite degree extension of $F$. We will consider the associated adic space 
$\breve{\Fc} (G,\mu)$ 
over $\breve{E}$,  the completion of the maximal unramified extension of $E$. After fixing an element $b\in G(\breve{F})$,  Rapoport and Zink constructed in \cite{RZ} (see also \cite{Ra, DOR})  an open subspace \[\breve{\Fc}(G, \mu, b)^{wa}\] inside $\breve{\Fc}(G,\mu)$, which is called  a $p$-adic period domain, as a vast generalization of  Drinfeld upper half spaces (\cite{Dr}). The name ``$p$-adic period domain'' comes as follows. For any finite extension $K|\breve{E}$, the points in $\Fc(G, \mu, b)^{wa}(K)$ correspond to weakly admissible filtered isocrystals equipped with a $G$-structure, which are then admissible (as $K|\breve{E}$ is finite) by a fundamental result in $p$-adic Hodge theory (\cite{CoFo}). We thus get  crystalline representations with additional structures attached to points in $\breve{\Fc}(G, \mu, b)^{wa}(K)$. Here in order to get non empty $\breve{\Fc}(G, \mu, b)^{wa}$, we have to assume that $b$ satisfies a certain condition with respect to $\{\mu\}$, cf. prop. \ref{P:wa non empty} for some background on this.

If $K|\breve{E}$ is an arbitrary complete extension, then it is not clear whether we still get Galois representations attached to points in $\breve{\Fc}(G, \mu, b)^{wa}(K)$. This lead Rapoport and Zink to conjecture that there exists an open subspace \[\breve{\Fc}(G, \mu, b)^{a}\] inside $\breve{\Fc}(G, \mu, b)^{wa}$, such that there exists a  $p$-adic local system with additional structures over $\breve{\Fc}(G, \mu, b)^{a}$ which interpolates the crystalline representations attached to all classical points, cf. \cite{DOR} conj. 11.4.4 and \cite{Har} conj. 2.3. Contrary to $\breve{\Fc}(G, \mu, b)^{wa}$, it is difficult to give a direct construction (and explicit description) for the desired $\breve{\Fc}(G, \mu, b)^{a}$.

 For certain triples $(G, \mu, b)$ (those so called of PEL type), Hartl (\cite{Har, Har1}) and Faltings (\cite{Fal}) constructed the space $\breve{\Fc}(G, \mu, b)^{a}$  by using the Robba ring $\wt{B}^\dagger_{rig}$  and the crystalline period ring $B_{cris}$ respectively. If $K|\breve{E}$ is finite, we have $$\breve{\Fc}(G, \mu, b)^{a}(K)=\breve{\Fc}(G, \mu, b)^{wa}(K)$$ as explained above. But it turns out that in general the inclusion \[\breve{\Fc}(G, \mu, b)^{a}\subset \breve{\Fc}(G, \mu, b)^{wa}\] is strict.
\\

By the recent progress in $p$-adic Hodge theory, thanks to the works of Fargues-Fontaine \cite{FF} and Fargues \cite{F3}, we can now construct  $\breve{\Fc}(G, \mu, b)^{a}$ as in the Rapoport-Zink conjecture for any triple $(G, \{\mu\}, b)$ (compatible with the constructions of Hartl and Faltings above), by using $G$-bundles on the Fargues-Fontaine curve, cf. def. \ref{D:admissible qs}. In \cite{Har} sec. 9 and \cite{Ra2} A.20,  Hartl and Rapoport asked when we do have
\[ \breve{\Fc}(G,\mu, b)^{a}=\breve{\Fc}(G,\mu, b)^{wa} \ \ ?\]
 For $G=\GL_n$ Hartl gave a complete solution to this question (\cite{Har} theo. 9.3).
 \\
 
Let $b\in G(\breve{F})$ be an element such that its associated $\sigma$-conjugacy class \[[b]\in B(G, \mu)\] is the unique \textsl{basic} element in the Kottwitz set $B(G,\mu)$ (\cite{Kot2} sec. 6). In this paper we prove  the following theorem that was conjectured by Fargues and Rapoport (\cite{GoHeNi} conj. 0.1).

\begin{theorem}[theo. \ref{theo:main theorem}]
\label{conjecture} The equality
 $\breve{\Fc}(G,\mu, b)^{wa}=\breve{\Fc}(G,\mu, b)^{a}$ holds if and only if $B (G,\mu)$ is fully Hodge-Newton decomposable.
\end{theorem}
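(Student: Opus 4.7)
The plan is to prove both directions by analysing the Newton stratification of $\breve{\Fc}(G,\mu)$ induced by the map sending a point $x$ to the isomorphism class $[b_x]\in B(G,\mu)$ of the $G$-bundle $\mathcal{E}_x$ obtained by modifying $\mathcal{E}_b$ at $\infty$ of type $\mu$ according to $x$. Since $b$ is basic, the admissible locus is exactly the stratum where $\mathcal{E}_x$ is the trivial $G$-bundle, so the theorem reduces to the statement: for every non-basic $[b']\in B(G,\mu)$, the associated Newton stratum of $\breve{\Fc}(G,\mu)$ is disjoint from the weakly admissible locus if and only if $[b']$ is Hodge--Newton decomposable.

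\textbf{Sufficiency.} Assume $B(G,\mu)$ is fully Hodge--Newton decomposable and let $x\in \breve{\Fc}(G,\mu,b)(C)$, for $C|\breve{E}$ algebraically closed complete, satisfy $[b_x]=[b']$ non-basic. By hypothesis $[b']$ is HN-decomposable through a proper standard Levi $M\subset G$, so $\mathcal{E}_{b'}$ admits a canonical reduction to $M$, and hence to the associated parabolic $P$, whose Newton slope on the Levi quotient strictly dominates the complementary ones. Transporting this reduction backwards through the modification at $\infty$ yields a reduction of $\mathcal{E}_b$ to $P$ over the Fargues--Fontaine curve, and, taking fibers at $\infty$, a lift of $x$ to a point of $\Fc(P,\mu)(C)$. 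A direct degree computation---comparing the degrees of the $M$-parts on the two sides of the modification with the Hodge contribution prescribed by the corresponding component of $\mu$---then exhibits a violation of the Rapoport--Zink slope inequality for this $P$-reduction, so $x$ cannot be weakly admissible. Passing from this pointwise statement to the inclusion of open adic subspaces should be formal, since both loci are characterised by conditions at rank-one points.

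\textbf{Necessity.} Arguing contrapositively, suppose $[b']\in B(G,\mu)$ is non-basic and not HN-decomposable; I would construct $x\in \breve{\Fc}(G,\mu,b)^{wa}(C)$ with $[b_x]=[b']$, which then automatically lies in $\breve{\Fc}(G,\mu,b)^{wa}\setminus \breve{\Fc}(G,\mu,b)^{a}$. Concretely, one chooses a modification $\mathcal{E}_b\dashrightarrow \mathcal{E}_{b'}$ of type $\mu$ at $\infty$ coming from a sufficiently generic splitting of $\mathcal{E}_{b'}$, and verifies weak admissibility by inspecting every reduction of $\mathcal{E}_b$ to a proper standard parabolic $P$. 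The non HN-decomposability of $[b']$ is precisely what should forbid $\mathcal{E}_{b'}$ from carrying a Levi reduction with the slope concentration needed to destabilise $(\mathcal{E}_b,x)$, so no destabilising $P$-reduction exists and $x$ is weakly admissible.

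\textbf{Main obstacle.} The principal difficulty is the necessity direction: producing a genuine weakly admissible point in each non-basic, non HN-decomposable Newton stratum is a non-trivial non-emptiness statement, and the subsequent verification of weak admissibility requires simultaneous control of all standard parabolic reductions of $\mathcal{E}_b$ after a prescribed modification. A natural route is to reduce to the case where $[b']$ is minimal (in the partial order on $B(G,\mu)$) among non HN-decomposable non-basic classes, and then to exploit the explicit classification of fully HN-decomposable pairs $(G,\{\mu\})$ available in the literature, handling the remaining cases by an essentially case-by-case analysis tied to the simple factors of the root datum of $G$.
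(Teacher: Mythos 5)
Your overall strategy --- stratify $\Fc(G,\mu)$ by the isomorphism class of the modified bundle, identify the admissible locus with the open stratum, and compare stratum by stratum with the weakly admissible locus --- is the right skeleton, but the proposal has genuine gaps at the three hardest points. First, the strata are \emph{not} indexed by $B(G,\mu)$: since $\E_{b,x}$ is a modification of $\E_b$ of type $\mu$ with $[b]$ basic, its isomorphism class ranges over the other Kottwitz set $B(G,0,\nu_b\mu^{-1})$ (equivalently $B(J_b,\mu^{-1})$), and there is a priori no direct relation between full HN decomposability for this set and for $B(G,\mu)$. The equivalence of the two conditions is a non-trivial combinatorial statement (a ``minute criterion'' computation carried out separately for each set), and it is an indispensable ingredient that your reduction silently assumes.

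Second, in the sufficiency direction the step ``a destabilising $P$-reduction of $\E_b$ violates the Rapoport--Zink inequality'' is exactly where the difficulty sits: weak admissibility only constrains reductions of $\E_b$ that come from reductions of the isocrystal $b$ itself, and not every parabolic reduction of the bundle $\E_b$ arises this way --- this is precisely why $\Fc(G,\mu,b)^a$ and $\Fc(G,\mu,b)^{wa}$ differ in general. One must prove that the reduction transported from the canonical reduction of $\E_{b,x}\simeq\E_{b'}$ does come from a reduction of $b$; this requires pinning down the relative position of the modification on the Levi quotient by a degree estimate against semi-stability of $\E_b$, and then a lemma asserting that a semi-stable $G$-bundle with a parabolic reduction of slope equal to $\nu_\E$ on the Levi quotient actually reduces to the Levi. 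Your ``direct degree computation'' conceals this entire argument. Third, for necessity, constructing an explicit weakly admissible point in a non-HN-decomposable stratum (possibly case by case via the classification) is not the route that succeeds; the working argument is global: if the stratum $\Fc(G,\mu,b)^{[b']}$ avoided the weakly admissible locus entirely, every of its points would carry a destabilising reduction forced to be the canonical one, and transporting this through the dual (Hodge--Tate) period map one finds that the non-empty open set $\Fc(J_b,\mu^{-1},b'')^a$ would lie in a profinite union of non-open Schubert cells, contradicting openness of the admissible locus. No explicit point needs to be produced, and no case analysis is required.
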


Recall that, roughly, the set $B(G,\mu)$ is fully Hodge-Newton decomposable if for any non basic element of $B(G,\mu)$ its Newton polygon, seen as an element of a positive Weyl chamber, touches the Hodge polygon defined by $\mu$ outside its extremities. 
 We refer to section \ref{sec:HN decomposability} and more precisely \ref{sec:sub sec HN decomposability} for the meaning of this notion. 
 In \cite{GoHeNi} theorem 2.5 there is a purely group theoretical classification of all the fully Hodge-Newton decomposable pairs $(G,\{\mu\})$, and in loc. cit. theo. 2.3 one can find further equivalent conditions for $(G,\{\mu\})$ being fully Hodge-Newton decomposable. In the following, Hodge-Newton will be usually abbreviated to HN for simplicity. 
\\

To prove the theorem, we make intensively use of the theory of $G$-bundles on the Fargues-Fontaine curve (\cite{F3}). More precisely, let $C|\breve{E}$ be a complete algebraically closed extension and $X_{C^\flat}$ be the Fargues-Fontaine curve attached to $C^\flat$ equipped with a closed point $\infty$ with residue field $C$. 
Let us recall that the main theorem of \cite{F3} says that 
\begin{align*}
B(G) & \iso H^1_{\et} (X,G) \\
[b'] & \longmapsto [\E_{b'}].
\end{align*}
 To each point $x\in \breve{\Fc}(G,\mu)(C)$, we can attach a modified $G$-bundle at $\infty$ of $\E_b$ $$\E_{b,x}$$
 on $X_{C^\flat}$.
 We assume that $G$ is quasi-split to simplify. Then we can use the notion of a semi-stable $G$-bundle and the theory of Harder-Narasimhan reduction on $X_{C^\flat}$ to study the modification $\E_{b,x}$ and the geometry of $\breve{\Fc}(G,\mu)$. The subspace $\breve{\Fc}(G,\mu, b)^{a}$ is defined as the locus where $\E_{b,x}$  is a semi-stable $G$-bundle.
The isomorphism class of $\E_{b,x}$ defines a stratification of $\breve{\Fc}(G,\mu)$ indexed by another Kottwitz set $B(J_b,\mu^{-1})$ (cf. sec. \ref{sec:HN stratification of the flag variety}).
We prove that this other Kottwitz set is fully HN decomposable if and only if $B (G,\mu)$ is fully HN decomposable (coro. \ref{coro:fully HN dec ssi idem}). 
To compare  $\breve{\Fc}(G,\mu, b)^{a}$ with the weakly admissible locus $\breve{\Fc}(G,\mu, b)^{wa}$, we also need to describe $\breve{\Fc}(G,\mu, b)^{wa}$ in terms of $G$-bundles on $X_{C^\flat}$. This is given in proposition \ref{P:wa}:  $x$ is weakly admissible if and only if for any parabolic reduction $(\E_{b,x})_P$ coming from a reduction of $b$ to the parabolic subgroup $P$, the usual numerical semi-stability condition is satisfied. 
With these ingredients at hand, we can prove the Fargues-Rapoport  conjecture by studying parabolic reductions of $\E_{b,x}$ and the one coming from a reduction of $b$.
\\

The reader who wants to have a feeling of how this type of proof works in a particular case can read the case of $SO(2,n-2)$ treated in \cite{F4} (see the appendix of \cite{Sh}). This case is instructive and  served as a starting point for the general proof of the implication that fully HN decomposability implies $\breve{\Fc}(G,\mu, b)^a=\breve{\Fc}(G,\mu, b)^{wa}$. It gives in particular the computation of the $p$-adic period space of K3 surfaces with supersingular reduction.
\\

We briefly describe the structure of this article. In section \ref{sec: G bundles on the curve}, we review the basic facts about $G$-bundles on the Fargues-Fontaine curve which we will use. In section \ref{sec:weak ad locus}, we review the definition of the weakly admissible locus $\breve{\Fc}(G, \mu, b)^{wa}$. In the quasi-split  case, we can give an equivalent definition for $\breve{\Fc}(G, \mu, b)^{wa}$ by using the theory of $G$-bundles on the Fargues-Fontaine curve. In section \ref{sec:ad locus}, we give the definition of the admissible locus $\breve{\Fc}(G, \mu, b)^{a}$ by using semi-stable $G$-bundles on the Fargues-Fontaine curve.  In section \ref{sec:HN decomposability}, we describe a generalized Kottwitz set for general groups, and we also discuss the fully HN decomposability condition and related properties that will be used in the sequel. In section \ref{sec:HN stratification of the flag variety}, we first explain the twin towers principle which is an important tool in the proof of the main theorem. Then we construct the Harder-Narasimhan stratification of the flag variety $\breve{\Fc}(G, \mu)$ and describe each stratum. With all these preparations, we finally prove the Fargues-Rapoport conjecture in section \ref{sec:proof of the main theorem}. Finally, in section \ref{sec:asymptotic}, we discuss the asymptotic geometry of period spaces. We introduce in particular a new conjecture (\ref{conj:ad egal was ssi domaine fondamental compact}) saying that $\breve{\Fc}(G,\mu,b)^{a}=\breve{\Fc}(G,\mu,b)^{wa}$ if and only if there exists a quasicompact fundamental domain for the action of $J_b(F)$ on $\breve{\Fc}(G,\mu,b)^a$. \\
 \\
\textbf{Acknowledgments.}  We would like to thank Sian Nie and Ulrich G\"ortz sincerely for valuable help in group theory.

\section*{Notations}

We use the following notations:
\begin{itemize}
\item $F$ is a finite degree extension of $\Q_p$ with residue field $\F_q$.
\item $\Fbar$ is an algebraic closure of $F$ and $\Gamma = \Gal (\Fbar |F)$.
\item $\breve{F} =\widehat{F^{un}}$ is the completion of the maximal unramified extension $\Fun$ of $F$ with Frobenius $\s$.
\item $G$ is a connected reductive group over $F$.
\item $H$ is a quasi-split inner form of $G$ equipped with an inner twisting $G_{\Fbar}\xrightarrow{\sim} H_{\Fbar}$.
\item $\boldsymbol{A}\subset \boldsymbol{T} \subset \boldsymbol{B}$ are a maximal unramified torus, $\boldsymbol{T}=C_H (\boldsymbol{A})$ a minimal Levi and $\boldsymbol{B}$ a Borel subgroup in $H$. We reserve the notation $T$ for a maximal torus in $G$.
\item $(X^* (\TT),\Phi, X_*(\TT), \Phic )$ is the absolute root datum with positive roots $\Phi^+$ and simple roots $\Delta $ with respect to the choice of $\boldsymbol{B}$.
\item $(X^*(\AA), \Phi_0,  X_*(\AA),\Phic_0)$ is the relative root datum with positive roots $\Phi^+_0$ and simple (reduced) roots $\Delta_0$.
\item If $M$ is a standard Levi subgroup in $H$ we note by a subscript $M$, for example $\Phi_M$,
 the corresponding roots or coroots showing up in $\Lie\, M$. For example $M\mapsto \Delta_{0, M}$ induces a bijection between the standard Levi subgroups and subsets of $\Delta_0$.
 \item If $\mathbb{D}$ is the slope pro-torus with characters $X^* (\mathbb{D})=\Q$,
 we set $$\mathcal{N} (G)= \big [ \Hom (\mathbb{D}_{\overline{F}},G_{\overline{F}}) \, /\, G(\overline{F})\text{-conjugacy}\big ]^\Gamma,  $$
 the Newton chamber. Via the inner twisting between $G$ and $H$, there is an identification 
 $$
 \mathcal{N}(G)=\mathcal{N} (H)=X^* (\AA)_\Q^+.
 $$
 This is equipped with the usual order $\nu_1\leq \nu_2$ if and only if $\nu_2-\nu_1 \in \Q_{\geq 0}\Phi_0^+$.
 \item $\pi_1(H)=X_\ast(\boldsymbol{T})/\lan \Phi^\vee \ran$ is the algebraic fundamental group of $H$, and $\pi_1(H)_\Gamma$ is its Galois coinvariant. Via the inner twisting between $G$ and $H$, there are identifications
 \[\pi_1(G)=\pi_1(H),\quad \pi_1(G)_\Gamma=\pi_1(H)_\Gamma.\] 
 \item $G_{ad}$ is the adjoint group associated to $G$, $G_{der}\subset G$ is the derived subgroup, and $G_{sc}\ra G_{der}$ is the simply connected cover of $G_{der}$.
\end{itemize}

\section{$G$-bundles on the Fargues-Fontaine curve}
\label{sec: G bundles on the curve}

In this section, we review some basic facts about the Fargues-Fontaine curve and $G$-bundles on it, cf. \cite{FF, F3}. This theory will be the basic tool for our study of $p$-adic period domains.
We change slightly the notations from \cite{FF} and \cite{F3} to be in accordance with \cite{Ra2}.

\subsection{The Fargues-Fontaine curve}
The Fargues-Fontaine curve $X$ over $F$ is associated to the choice of a characteristic $p$ perfectoid field $K|\F_q$.
 We note $\pi_F$ for a uniformising element of $F$. 
It has several incarnations.
\subsubsection{The adic curve}
The adic curve $X^{ad}$ admits the following adic uniformization \[X^{ad}=Y/\varphi^\Z,\]where $Y=\Spa(W_{\Ol_F}(\Ol_K))\setminus V(\pi_F[\varpi_K])$, with $\varpi_K\in K$ satisfying $0<|\varpi_K|<1$. The action of the Frobenius $\varphi$ on the ramified Witt vectors is given by
\[\varphi \Big (\sum_{n\geq 0} [x_n]\pi_F^n\Big )=\sum_{n\geq 0} [x_n^q]\pi_F^n.\]
It induces a totally discontinuous action on $Y$ without fixed point.

\subsubsection{The algebraic curve}
There is a natural line bundle $\Ol(1)$ on $X^{ad}$, corresponding to the $\varphi$-equivariant line bundle on $Y$ whose underlying line bundle is trivial and for which the Frobenius  is $\pi_F^{-1}\varphi$. Set $\Ol(n)=\Ol(1)^{\otimes n}$, and \[P=\bigoplus_{n\geq 0}H^0(X^{ad},\Ol(n)).\]We have
\[H^0(X^{ad}, \Ol(n))=\Ol(Y)^{\varphi=\pi_F^n}.\]Let \[X=\Proj(P).\] By \cite{FF}, this is a one dimensional noetherian regular scheme over $F$.
There exists a morphism of ringed spaces
\[X^{ad}\lra X,\]and $X^{ad}$ may be viewed as the analytification of $X$ in some generalized sense. Typically there is a natural subset $|X^{ad}|^{cl}\subset |X^{ad}|$ of "classical Tate points" and the preceding induces a bijection
$$
|X^{ad}|^{cl} \xrightarrow{ \ \sim\ } |X|,
$$
where $|X|$ denotes the closed points of $X$.

Let $F$ be fixed. 
Suppose that instead of beginning with the characteristic $p$ datum $K|\mathbb{F}_q$ we start with $K^\sharp|F$ a perfectoid field and set $K=K^{\sharp,\flat}$. 
Then, the curve $X$ is equipped with a closed point $\infty$ with residue field $k(\infty)=K^\sharp$ and $\widehat{\Ol}_{X,\infty}=B^+_{dR} ( K^\sharp)$, cf. \cite{FF}. 
Quickly in the following $K$ will be supposed to be algebraically closed (we will test the semi-stability of some vector bundles by "specializing at a geometric point"). This is equivalent to $K^\sharp$ being algebraically closed.

\subsection{$G$-bundles}
Let $\Bun_{X}$ and $\Bun_{X^{ad}}$ be the categories of vector bundles on $X$ and $X^{ad}$ respectively. The morphism $X^{ad}\ra X$ induces a GAGA functor
\[\Bun_{X}\lra \Bun_{X^{ad}}.\]
\begin{theorem}[\cite{ F2, KL}]
	The GAGA functor induces an equivalence of categories \[\Bun_{X}\st{\sim}{\lra} \Bun_{X^{ad}}.\]
\end{theorem}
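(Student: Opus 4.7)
The plan is to mimic the structure of Serre's original GAGA argument, with the role of projective space played by $X=\Proj(P)$ and the role of the associated analytic space played by $X^{ad}$. Because the analytification functor is exact, the task decomposes into proving full faithfulness and essential surjectivity.

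\emph{Full faithfulness.} It suffices to show that for every vector bundle $\Ec$ on $X$ the natural comparison map
\[
H^0(X,\Ec)\lra H^0(X^{ad},\Ec^{an})
\]
is an isomorphism, applied to $\Ec=\Fc^\vee\otimes\G$ for $\Fc,\G\in\Bun_X$. For the generators $\Ec=\Ol(n)$ this is built into the very definition of $X$, since $P_n:=H^0(X^{ad},\Ol(n))$ was used to construct $X=\Proj P$ and one checks that $H^0(X,\Ol(n))=P_n$ from the graded-algebra description. To pass from the $\Ol(n)$ to an arbitrary vector bundle, I would first reduce to bundles whose slopes are integers by pulling back along a finite \'etale cover $X'\to X$ (changing $F$ to a finite unramified extension), after which the Fargues-Fontaine classification of semi-stable bundles on $X$ gives filtrations with graded pieces of the form $\Ol(n)$. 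A standard five-lemma argument on the long exact sequences of global sections then propagates the isomorphism from $\Ol(n)$ to all of $\Bun_X$, provided the analogous higher cohomology groups vanish on $X^{ad}$; this vanishing is the analytic counterpart of the standard $H^1$-vanishing on $X$.

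\emph{Essential surjectivity.} Given $\Ec\in\Bun_{X^{ad}}$, I would, again following Serre, form the graded $P$-module
\[
M(\Ec)\;=\;\bigoplus_{n\geq 0}H^0(X^{ad},\Ec\otimes \Ol(n)),
\]
and show that the coherent sheaf $\wt{M(\Ec)}$ on $X$ satisfies $\wt{M(\Ec)}^{an}\simeq\Ec$. This requires a Serre-vanishing / generation statement on the analytic side: for $n\gg0$, $\Ec\otimes\Ol(n)$ is generated by its global sections and has trivial higher cohomology on the distinguished affinoid opens covering $X^{ad}$. Once these two ingredients are in place, an adjunction argument produces the isomorphism, and one deduces that every $\Ec$ lies in the essential image.

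\emph{Main obstacle.} The hard point is the analytic Serre-vanishing/generation just mentioned; on the scheme $X$ this is routine because $X$ has an affine open covering by Dedekind-like domains, but $X^{ad}$ is not compact and its structure sheaf has only weak finiteness properties. The remedy, which is really the technical heart of the result, is to exploit the adic uniformization $X^{ad}=Y/\vep^\Z$ and translate vector bundles on $X^{ad}$ into $\vep$-modules over a Robba-type ring, where Kedlaya's slope theory provides both the classification (every such $\vep$-module is a direct sum of standard isoclinic pieces) and the finiteness needed to run the cohomological comparison. Modulo this translation, the rest of the proof is a formal GAGA.
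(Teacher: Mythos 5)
The paper does not prove this statement: it is quoted directly from \cite{F2} and \cite{KL}, so there is no internal proof to compare against, and your plan has to be judged against those references. Measured that way, your outline is viable but takes a genuinely different route. Kedlaya--Liu and Fargues do not run Serre's $\Proj$-theoretic argument; they identify \emph{both} $\Bun_X$ and $\Bun_{X^{ad}}$ with a category of $\varphi$-modules over a Robba-type ring (for $\Bun_{X^{ad}}$ this is descent along $Y\rightarrow Y/\varphi^{\Z}$ together with the non-trivial fact that vector bundles on the relevant annuli in $Y$ are free; for $\Bun_X$ it is the computation of the graded ring $P$ and of the cohomology of the $\Ol(n)$), and Kedlaya's slope filtration then gives a simultaneous d\'evissage of both sides down to the standard objects $\Ol(\lambda)$, plus an $H^1$-comparison to handle extensions. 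Your Serre-style scheme --- full faithfulness from an $H^0$-comparison, essential surjectivity from the graded module $\bigoplus_{n}H^0(X^{ad},\Ec(n))$ together with global generation and cohomology vanishing of $\Ec(n)$ for $n\gg 0$ --- is essentially the ``ampleness of $\Ol(1)$'' proof later written down by Fargues--Scholze \cite{FS} for the relative curve; it generalizes better in families, but the ampleness statement it requires is proved by exactly the same $\varphi$-module input. The two routes consume the same black box; yours repackages it through a more classical GAGA formalism.

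Two caveats on the details as written. First, your d\'evissage for full faithfulness invokes the classification of bundles on $X$ (direct sums of $\Ol(\lambda)$ after an unramified base change), which is available only for $K$ algebraically closed, whereas the theorem is stated in the paper for an arbitrary perfectoid $K$ (the hypothesis that $K$ is algebraically closed is imposed only afterwards); for general $K$ you must either descend from $\widehat{\overline{K}}$ --- not a finite \'etale cover, so this is not formal --- or prove the $H^0$- and $H^1$-comparisons for all coherent sheaves directly, as the references do. Second, your five-lemma step needs an \emph{isomorphism} on $H^1$, not merely vanishing: the internal Homs $\Fc^{\vee}\otimes\Ec$ arising in full faithfulness have negative slopes in general and $H^1(X,\Ol(n))\neq 0$ for $n<0$; using the direct-sum decomposition instead of filtrations avoids this, but as phrased the reduction is incomplete. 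Finally, $X^{ad}$ is in fact quasi-compact (it is the image of a single affinoid annulus in $Y$); the real obstruction to classical Cartan--Serre finiteness is not non-compactness but that $\Ol(Y)$ is not topologically of finite type --- which is precisely why the argument must pass through Kedlaya's theory, as you correctly anticipate.
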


{\it We assume from now on that $K$ is algebraically closed.} For example, $K=C^\flat$ with $C|\Fbar$  a complete algebraically closed field.
Let $$\phmod_{\Fb}$$ be the category of isocrystals relative to $\Fb|F$.
 For any $(D,\varphi)\in \phmod_{\Fb}$, we can construct a vector bundle $\E(D,\varphi)$ on $X$ 
 associated to the graded $P$-module 
$$
\bigoplus_{n\geq 0}\big (D\otimes_{\Fb} \Ol(Y)\big )^{\varphi\otimes \varphi=\pi_F^n}.
$$
Via GAGA this corresponds to the vector bundle $Y\underset{\varphi^\Z}{\times} D$ on $X^{ad}=Y/\varphi^\Z$.

\begin{theorem}[\cite{FF}]
	The functor $\E(-): \phmod_{\Fb}\lra \Bun_{X}$ is essentially surjective.
\end{theorem}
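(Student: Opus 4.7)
The plan is to separate the claim into two parts: first, to exhibit sufficiently many bundles in the essential image of $\E$, and second, to invoke the Fargues-Fontaine classification of vector bundles on $X$ to conclude. Under that classification, every object of $\Bun_{X}$ is isomorphic to a finite direct sum $\bigoplus_i \Ol_X(\lambda_i)$ with $\lambda_i\in\Q$, where $\Ol_X(\lambda)$ denotes the standard stable bundle of slope $\lambda$; so it suffices to show that each $\Ol_X(\lambda)$ lies in the image of $\E$.

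For the first step I would start from the Dieudonn\'e-Manin decomposition of $\phmod_{\Fb}$, which applies because $K$ being algebraically closed forces $\Fb$ to have algebraically closed residue field: every isocrystal splits as $\bigoplus_i D_{d_i,h_i}$, where $D_{d,h}$ is the simple isocrystal of slope $d/h$ with $\gcd(d,h)=1$ and $h\geq 1$. Unwinding the graded module
\[\bigoplus_{n\geq 0}\big(D_{d,h}\otimes_{\Fb}\Ol(Y)\big)^{\varphi\otimes\varphi=\pi_F^n}\]
using $H^0(X,\Ol_X(n))=\Ol(Y)^{\varphi=\pi_F^n}$ and the multiplication in $P$ identifies $\E(D_{d,h})$ with the stable bundle $\Ol_X(-d/h)$; the line bundle case $(h,d)=(1,-1)$ is easy to check by hand and the general case follows from compatibility with pushforward under the degree-$h$ unramified cover $X_{F_h}\to X_F$. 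As $(d,h)$ vary, $-d/h$ covers $\Q$, so every $\Ol_X(\lambda)$, hence every finite direct sum of such, lies in the essential image of $\E$.

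The remaining content is the classification itself, which I would prove by d\'evissage along the Harder-Narasimhan filtration. Three inputs are needed: (i) $\mathrm{Pic}(X)=\Z\cdot[\Ol_X(1)]$, which settles the rank one case; (ii) $H^0(X,\Ol_X(n))\neq 0$ for $n\geq 0$, and more generally any semi-stable bundle of positive slope acquires a non-zero global section after a suitable twist; (iii) every semi-stable bundle of slope $0$ is trivial. From (iii), twisting by $\Ol_X(\lambda)$ shows every semi-stable bundle of slope $\lambda=d/h$ is isomorphic to $\Ol_X(\lambda)^{\oplus k}$ for some $k$, and from (ii) a short diagram chase shows that the HN filtration of any bundle splits as a direct sum. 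Combining these produces the classification and hence the theorem.

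The main obstacle is (iii), the triviality of slope-$0$ semi-stable bundles. It is exactly the step where $K$ being algebraically closed and the "completeness" of $X$ (concretely $\mathrm{Pic}(X)=\Z$) genuinely enter. One produces a non-zero global section of such an $\E$ by analysing its modification at the closed point $\infty\in|X|$, whose local ring is $\BpdR(K^\sharp)$, uses the approximation theorem to lift an element of the lattice back to a global section, then quotients by the resulting $\Ol_X\hookrightarrow\E$, verifies that the quotient remains slope-$0$ and semi-stable, and inducts on rank. Once this last point is granted, the classification and therefore the essential surjectivity of $\E$ follow formally.
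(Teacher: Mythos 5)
The paper offers no proof of this statement: it is quoted as a black box from \cite{FF}, where it is deduced from the classification theorem for vector bundles on $X$. Your overall architecture is exactly that of the cited source: reduce to showing that every $\Ol(\lambda)$, $\lambda\in\Q$, lies in the essential image, via the classification $\E\cong\bigoplus_i\Ol(\lambda_i)$, and identify $\E(D_{d,h})$ with $\Ol(-d/h)$ using Dieudonn\'e--Manin and pushforward along $X_{F_h}\to X_F$ (the sign flip is correct, cf.\ Theorem \ref{T:NK}(1)). Two small corrections to that first half: Dieudonn\'e--Manin applies because the residue field of $\Fb=\widehat{F^{un}}$ is $\overline{\F}_q$ \emph{by construction}, independently of $K$; and the identification $\E(D_{d,h})\cong\Ol(-d/h)$ is essentially the definition of $\Ol(d,h)$ in \cite{FF}, so that part is bookkeeping.

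The genuine gap is in your step (iii). That every semi-stable slope-$0$ bundle of rank $n\geq 2$ is trivial is \emph{the} deep theorem of \cite{FF}, and the sentence you offer for it --- analyse the modification at $\infty$ and use an approximation theorem to lift a lattice element to a global section --- is not an argument: approximation at $\infty$ (the fundamental exact sequence for $\BpdR$) lets you control sections of a bundle \emph{already known} to have a large $H^0$, but an arbitrary semi-stable $\E$ of slope $0$ comes with no supply of meromorphic sections to approximate. What is actually needed is that every such $\E$ arises as a degree-one modification at $\infty$ of $\Ol(1/n)$ (whose $H^0$ is infinite-dimensional, so that imposing the one-dimensional condition at $\infty$ still leaves a nonzero section); this surjectivity of the relevant period morphism is proved in \cite{FF} either via the Lubin--Tate/Drinfeld isomorphism at infinite level or via the dimension theory of Banach--Colmez spaces, and there is no soft substitute for it. The surrounding d\'evissage --- $\mathrm{Pic}(X)=\Z$, vanishing of $H^1$ in nonnegative slopes to split the Harder--Narasimhan filtration, saturating $\Ol_X\hookrightarrow\E$ and inducting on the rank --- is standard and fine, but as written your proposal assumes precisely the hard content that the present paper is citing.
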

We will need the following fact:
\begin{theorem}[\cite{FF}]
The degree map of a line bundle induces an isomorphism
\[\deg: \tr{Pic}(X)\st{\sim}{\lra} \Z.\]
\end{theorem}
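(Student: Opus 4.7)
The strategy is to split the statement into surjectivity and injectivity of $\deg\colon\tr{Pic}(X)\to\Z$. Surjectivity is immediate from the existence of $\Ol(1)$: under the standard normalization in which each closed point of $X$ has degree one, $\deg\Ol(1)=1$, and $\Ol(n)$ then realizes every integer.

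For injectivity I would invoke the essential surjectivity of the functor $\E(-)\colon\phmod_{\Fb}\to\Bun_X$ just recalled. Every line bundle $L$ is isomorphic to $\E(D,\varphi)$ for some isocrystal, and from the GAGA description $\E(D,\varphi)=Y\times_{\varphi^{\Z}}D$ one sees that $\mathrm{rank}\,\E(D,\varphi)=\dim_{\Fb}D$, so rank one forces $\dim_{\Fb}D=1$. A one-dimensional isocrystal has the form $\Fb\cdot e$ with $\varphi(e)=ce$, and the change of basis $e\mapsto ae$ replaces $c$ by $\sigma(a)c/a$; Lang's theorem applied to the $\sigma$-module $\Ol_{\Fb}^{\times}$ shows that $(\sigma-1)$ is surjective on units, so the isomorphism class of $(D,\varphi)$ depends only on $n:=-v(c)\in\Z$, where $v$ is the $\pi_F$-adic valuation. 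Unwinding the construction then identifies $\E(D,\varphi)$ with $\Ol(n)$, so every line bundle is one of the $\Ol(n)$.

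To conclude, I would distinguish the $\Ol(n)$ by computing global sections: $H^{0}(X,\Ol(n))=\Ol(Y)^{\varphi=\pi_F^{n}}$ vanishes for $n<0$, equals the scalars $F$ for $n=0$, and is infinite-dimensional over $F$ for $n\geq 1$, so $n$ is determined by $L$; combined with $\deg\Ol(n)=n$ this closes injectivity. The main obstacle is the non-vanishing of $H^{0}(X,\Ol(n))$ for $n\geq 1$, which is the analytic heart of Fargues--Fontaine theory and is proved by explicit construction of $\varphi$-equivariant "theta functions" on $Y$ of the prescribed weight (or equivalently via the fundamental exact sequence for $B_e$); I would take this as given from \cite{FF}.
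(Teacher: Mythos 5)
The paper offers no proof here—it cites \cite{FF} and adds only the remark that the non-evident point is the \emph{well-definedness} of the degree. Your proposal correctly computes the abstract group $\mathrm{Pic}(X)$ (every line bundle comes from a rank-one isocrystal, and rank-one isocrystals over $\Fb$ are classified by $v(c)\in\Z$ via surjectivity of $u\mapsto\sigma(u)/u$ on $\Ol_{\Fb}^\times$), but two gaps remain. The smaller one: the trichotomy "$H^0(X,\Ol(n))$ is $0$, $F$, or infinite-dimensional" does \emph{not} determine $n$—it cannot distinguish $\Ol(1)$ from $\Ol(2)$. The standard fix is to tensor: if $\Ol(n)\cong\Ol(m)$ then $\Ol(n-m)\cong\Ol_X$, and $H^0(X,\Ol(k))\neq 0\neq H^0(X,\Ol(-k))$ forces $k=0$ by the vanishing of $\Ol(Y)^{\varphi=\pi_F^{-k}}$ for $k>0$. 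You cannot instead appeal to "$\deg\Ol(n)=n$" at this point, for the reason below.

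The main gap is precisely the one the paper flags. You produce a bijection $\Z\to\mathrm{Pic}(X)$, $n\mapsto[\Ol(n)]$, but the theorem concerns the \emph{degree map}, which is defined via divisors of meromorphic sections; for it to descend to $\mathrm{Pic}(X)$ one must know $\deg\,\mathrm{div}(f)=0$ for all rational functions $f$ (completeness of the curve), and to see that it sends $[\Ol(n)]$ to $n$ one must know that a nonzero $t\in\Ol(Y)^{\varphi=\pi_F^{n}}$ has zero divisor of degree exactly $n$. Your opening sentence "under the standard normalization\dots $\deg\Ol(1)=1$" silently assumes all of this, and it is not subsumed by the non-vanishing of $H^0$ that you do acknowledge as an input: it is the Newton-polygon/Weierstrass-factorization heart of \cite{FF}. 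A related caveat: in \cite{FF} the classification theorem (essential surjectivity of $\E(-)$) is proved \emph{after}, and using, $\mathrm{Pic}(X)\xrightarrow{\sim}\Z$ and Harder--Narasimhan theory; your route is admissible given the order in which the present paper lists these facts, but it inverts the logical order of the source and so would be circular as a self-contained proof.
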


Note that the fact the degree of a line bundle is well defined is not evident. This is a consequence of the fact that the curve is "complete": the degree of a principal divisor is zero. This is essential to develop a theory of Harder-Narasimhan reduction.
\\

We have the following two equivalent definitions of a $G$-bundle on $X$:
\begin{enumerate}
	\item an exact tensor functor $\Rep \, G\ra \Bun_X$, where $\Rep \, G$ is the category of rational algebraic representations of $G$,
	\item a $G$-torsor on $X$ locally trivial for the \'etale topology.
\end{enumerate}
Recall that an isocrystal with $G$-structure is an exact tensor functor \[\Rep\, G\lra \phmod_{\Fb}.\]
 Let $B(G)$ be the set of $\sigma$-conjugacy classes in $G(\breve{F})$ (\cite{Kot1, Kot2, RR}).
If $b\in G(\breve{F})$, it  defines an isocystal with $G$-structure \[\begin{split} \mathcal{F}_b: \Rep\, G&\lra \phmod_{\breve{F}} \\ V&\longmapsto (V_{\Fb}, b\sigma).\end{split}\] 
Its isomorphism class  depends only on the $\sigma$-conjugacy class $[b]\in B(G)$ of $b$. Conversely, any isocrystal with $G$-structure arises in this way. Thus $B(G)$ is the set of isomorphism classes of isocrystals with $G$-structure (\cite{RR} rem. 3.4 (i)). For $b\in G(\breve{F})$, let $\E_b$ be the composition of the above functor $\mathcal{F}_b$ and \[\E(-):  \phmod_{\breve{F}}\lra  \Bun_X.\] 
In this way, the set $B(G)$ also classifies $G$-bundles on $X$. In fact, we have
\begin{theorem}[\cite{F3},\cite{Ans}]\label{T: G-bundles}
	There is a bijection of pointed sets
	\[\begin{split} B(G)&\st{\sim}{\lra} H^1_{\textrm{\'et}}(X, G) \\
	[b]&\longmapsto [\E_b]. \end{split}	\]
\end{theorem}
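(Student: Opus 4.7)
The plan is to pass from $G=\GL_n$ to general reductive $G$ by Tannakian formalism together with Harder-Narasimhan reduction for $G$-bundles on $X$. For $G=\GL_n$, the map sends an isocrystal $(D,\varphi)$ over $\Fb$ to the vector bundle $\E(D,\varphi)$. Combining Dieudonné-Manin, which identifies $B(\GL_n)$ with multisets of rationals summing to an integer and realises each simple isocrystal as some $\Ol(\lambda)$ under $\E(-)$, with the Fargues-Fontaine classification asserting that every vector bundle on $X$ is a direct sum $\bigoplus_i \Ol(\lambda_i)$ uniquely determined by its multiset of slopes, yields the bijection in this case.

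For general $G$, well-definedness is immediate from the Tannakian construction: $\sigma$-conjugate $b,b'$ yield isomorphic isocrystals with $G$-structure, hence isomorphic $G$-bundles $\E_b,\E_{b'}$. To prove bijectivity, I would first reduce to the quasi-split case via the inner twist $G_{\Fbar}\iso H_{\Fbar}$: this induces $B(G)\iso B(H)$ and, after twisting by a suitable pure inner form torsor, identifies $G$-bundles with $H$-bundles on $X$. Assuming now $G$ quasi-split, I would argue by induction on semisimple rank. The geometric input is Harder-Narasimhan reduction for $G$-bundles: any $G$-bundle $\E$ admits a canonical reduction $\E_P$ to a standard parabolic $P$ with Levi $M$ for which the induced $M$-bundle is semistable with HN-dominant slope. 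On the group side, every $[b]\in B(G)$ admits a canonical Newton-Hodge reduction to a standard Levi in which it becomes basic. Matching these two canonical reductions under $[b]\mapsto\E_b$ allows the induction hypothesis, applied to $M$, to handle everything outside the semistable / basic base case.

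For the base case, semistable $G$-bundles of fixed Newton class $\nu$ must be shown to correspond to the basic $\sigma$-conjugacy classes with Newton point $\nu$. Fix the basic class $[b_\nu]$ and let $J_\nu$ be its inner form. Semistable $G$-bundles of slope $\nu$ form a torsor under $H^1_{\et}(X,J_\nu)$; a direct computation identifies this set with $\pi_1(J_\nu)_\Gamma=\pi_1(G)_\Gamma$, matching Kottwitz's parametrization of $B(G)_{\mathrm{basic},\nu}$ via the map $\kappa$. This gives the bijection on each semistable stratum.

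The main obstacle is establishing the Harder-Narasimhan reduction for $G$-bundles on the curve $X$ and verifying its compatibility with the Newton decomposition of $[b]$: one has to prove that the parabolic and Levi emerging geometrically from the HN filtration of $\E_b$ coincide with the canonical parabolic and Levi attached to the Newton point $\bar{\nu}_b$, and that the induced semistable $M$-bundle corresponds to the image of $b$ in $B(M)$. Once this compatibility is established, together with the identification $H^1_{\et}(X,J_\nu)\cong\pi_1(G)_\Gamma$ in the semistable stratum, induction on semisimple rank assembles the full bijection.
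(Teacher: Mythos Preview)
The paper does not give its own proof of this theorem; it simply cites \cite{F3} and \cite{Ans}. Your sketch is in the spirit of \cite{F3}: Harder--Narasimhan reduction does reduce the classification to the semistable case, and the compatibility you describe between the HN reduction of $\E_b$ and the Newton reduction of $[b]$ is indeed what one proves. However, there are two genuine gaps.

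First, your ``base case'' is circular. You write that semistable $G$-bundles of slope $\nu$ form a torsor under $H^1_{\et}(X,J_\nu)$ and that ``a direct computation'' identifies this with $\pi_1(G)_\Gamma$. But the twin towers principle identifies \emph{all} of $H^1_{\et}(X,G)$ with $H^1_{\et}(X,J_\nu)$, not just the semistable part; and in any case computing $H^1_{\et}(X,J_\nu)$ is exactly the problem you are trying to solve, for another reductive group. The actual content of the semistable case in \cite{F3} is quite different: one shows directly that for a torus $T$ one has $H^1_{\et}(X,T)\simeq X_*(T)_\Gamma$, and that for $G$ semisimple, simply connected, and quasi-split every $G$-bundle on $X$ is trivial. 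The latter is the hard step; it uses that the function field of $X$ has the cohomological properties needed to apply Steinberg--type vanishing, and is not a ``direct computation''. General $G$ is then handled via $z$-extensions or the crossed module $[G_{sc}\to G]$.

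Second, your reduction to the quasi-split case is too quick. Twisting by a pure inner form torsor identifies $G$-bundles with $H$-bundles only when $G$ is a \emph{pure} inner form of $H$, which is not automatic. The paper's citation of \cite{Ans} is precisely for this point: Ansch\"utz proves a structural result about reductive group schemes on $X$ that closes the gap between the quasi-split case treated in \cite{F3} and the general case. Your induction on semisimple rank does not circumvent this, since the inner forms $J_\nu$ appearing in your base case need not be quasi-split either.
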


\subsection{The Harder-Narasimhan reduction in the quasi-split case}
We assume that $G=H$ is quasi-split in this subsection.
The theory of Harder-Narasimhan reduction (\cite{BH} for example) applies for $G$-bundles over the Fargues-Fontaine curve $X$ (\cite{F3} sec. 5.1).
 If $G'\subset G$, then a reduction of a $G$-bundle $\E$ to $G'$ is a $G'$-bundle $\E_{G'}$ together with an isomorphism \[\E_{G'}\times_{G'} G\iso \E.\] 
Recall the following definition of        a semi-stable $G$-bundle.
\begin{definition}\label{D:semistable}
Let $\E$ be a $G$-bundle on $X$. It is called semi-stable if for any standard parabolic subgroup $P$ of $G$, any reduction $\E_P$ of $\E$ to $P$, and any $\chi\in X^\ast(P/Z_G)^+$, we have \[\deg\, \chi_\ast\E_P\leq 0.\]
\end{definition}
Let $M\subset P$ be the associated standard Levi subgroup of $G$, which we will identify with $P/R_uP$.
Then as $X^\ast(P)=X^\ast(M)$, we have \[\chi_\ast\E_P=\chi_\ast ( \E_P \times_P M).\] 
One proves, as usual, that  $\E$ is semistable if and only if the associated adjoint bundle \[\mathrm{Ad}(\E):=\E\times_{G, \mathrm{Ad}}\Lie\, G\] is semi-stable. We will later use  the following well-known criterion.

\begin{lemma}\label{lemma:reduction to the case of maximal parabolic}
The following are equivalent:
\begin{enumerate}
\item 
The $G$-bundle $\E$ on $X$ is semi-stable.
\item For any standard parabolic subgroup $P$ and any reduction $\E_P$ to $P$, one has 
$$
\deg ( \E_P\times_{P, \mathrm{Ad}} \Lie \, G / \Lie \, P ) \geq 0.
$$
\item The same holds for any maximal standard parabolic subgroup.
\end{enumerate}
\end{lemma}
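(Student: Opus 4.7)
My plan is to establish the cyclic chain of implications (1) $\Rightarrow$ (2) $\Rightarrow$ (3) $\Rightarrow$ (1), the middle step being tautological. The crucial bridge is the following identification: for any standard parabolic $P$ with Levi $M$, the $P$-representation $\Lie\, G/\Lie\, P$ (via the adjoint action) has weights $\{-\a : \a \in \Phi^+ \setminus \Phi_M^+\}$, so $\det(\Lie\, G/\Lie\, P)$ is the one-dimensional $P$-representation of character $-2\rho_P$, where $2\rho_P := \sum_{\a \in \Phi^+ \setminus \Phi_M^+}\a$. Since the degree of a vector bundle on the Fargues-Fontaine curve equals the degree of its determinant line bundle, this yields for any $P$-reduction $\E_P$ the key identity
$$
\deg\big(\E_P \times_{P, \mathrm{Ad}} \Lie\, G/\Lie\, P\big) \;=\; -\deg(2\rho_P)_*\E_P. \qquad (\ast)
$$

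For (1) $\Rightarrow$ (2), I would verify that $2\rho_P \in X^*(P/Z_G)^+$: it is $W_M$-invariant (since $W_M$ permutes $\Phi^+ \setminus \Phi_M^+$) and hence factors through $M$, it is trivial on $Z_G$ (which acts trivially on $\Lie\, G$), and $\langle 2\rho_P, \a^\vee\rangle > 0$ for every $\a \in \Delta \setminus \Delta_M$ (while vanishing for $\a \in \Delta_M$). Applying Definition \ref{D:semistable} to $\chi = 2\rho_P$ combined with $(\ast)$ then yields (2). The implication (2) $\Rightarrow$ (3) is immediate by restriction.

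The essential content lies in (3) $\Rightarrow$ (1). Given a standard parabolic $P$ with reduction $\E_P$, and $\chi \in X^*(P/Z_G)^+$, I would first establish that the positive cone $X^*(M/Z_G)^+_\Q$ equals the non-negative rational span of the classes of $2\rho_{P_\a}$ for $\a \in \Delta \setminus \Delta_M$, where $P_\a \supset P$ denotes the standard maximal parabolic whose Levi has simple roots $\Delta \setminus \{\a\}$. Indeed $X^*(G/Z_G)_\Q = 0$ since $G/Z_G$ is semi-simple, hence $X^*(M)_\Q = X^*(M/Z_G)_\Q \oplus X^*(G)_\Q$; in this decomposition each $2\rho_{P_\a}$ already lies in the first summand and projects to a positive rational multiple of the image of the fundamental weight $\omega_\a$, so the family $\{2\rho_{P_\a}\}_{\a \notin \Delta_M}$ forms a basis of $X^*(M/Z_G)_\Q$ generating the positive cone. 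Consequently $\chi = \sum_{\a \notin \Delta_M} c_\a \cdot 2\rho_{P_\a}$ in $X^*(M)_\Q$ with $c_\a \in \Q_{\geq 0}$ and no residual $X^*(G)$-correction. For each such $\a$, the inclusion $P \subset P_\a$ induces a $P_\a$-reduction $\E_{P_\a} := \E_P \times_P P_\a$ satisfying $(2\rho_{P_\a})_*\E_P = (2\rho_{P_\a})_*\E_{P_\a}$, and condition (3) combined with $(\ast)$ gives $\deg(2\rho_{P_\a})_*\E_{P_\a} \leq 0$. Summing (after clearing denominators) yields $\deg\chi_*\E_P \leq 0$, proving (1). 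The main technical point is the spanning of $X^*(M/Z_G)^+_\Q$ by the classes $[2\rho_{P_\a}]$ modulo characters of the center; once this root-system computation is in hand, the rest is a formal manipulation of degrees of line bundles.
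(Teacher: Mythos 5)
Your proof is correct and takes essentially the same route as the paper's: the key identity $\det(\Lie\, G/\Lie\, P)^{-1}=2\rho-2\rho_M=2\rho_P$, its dominance, and the fact that for maximal parabolics these characters are positive multiples of the corresponding fundamental weights, so that every $\chi\in X^*(P/Z_G)^+$ is a non-negative combination of the $2\rho_{P_\a}$ pushed down from the maximal parabolics containing $P$. The only point to watch (which the paper also glosses over) is that for a quasi-split non-split $G$ the maximal standard parabolics defined over $F$ correspond to deleting a whole $\Gamma$-orbit $\mathcal{O}\subset \Delta$, so the cone decomposition should be phrased with the orbit-sums $\sum_{\beta\in\mathcal{O}}\omega_\beta$ rather than individual $\omega_\a$ — a cosmetic adjustment.
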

\begin{proof}
This is a consequence of the fact that for any standard parabolic subgroup $P$, via the adjoint representation, $\det ( \Lie \, G / \Lie \, P)^{-1}\in X^*(\TT)^+$, and moreover when $P$ goes through the set of maximal parabolic subgroups those are $>0$ multiples of the fundamental weights. More precisely, if $M$ is the standard Levi attached to $P$ and  $\rho_M =\frac{1}{2} \sum_{\alpha\in \Phi (\TT)_M^+}\alpha$ then for $\beta \in \Delta_M$, $\langle \beta^\vee, \rho_M\rangle =1$ and for $\beta\in \Delta \setminus \Delta_M$, $\langle \beta^\vee,\rho_M\rangle \leq 0$. One concludes using that $\det ( \Lie \, G / \Lie \, P)^{-1}=2 \rho -2 \rho_M$.
\end{proof}

One can rephrase  this lemma in a more geometric way. In fact, if the reduction $\E_P$ corresponds to the section $s$ of $P\backslash \E\rightarrow X$, then 
$$
\E_P\times_{P, \mathrm{Ad}} \Lie \, G / \Lie \, P = s^* T (P\backslash \E) 
$$
is the pullback of the tangent bundle.
\\

For a general $G$-bundle, as usual, we have the following theorem.
\begin{theorem}\label{T:HN reduction}
Let $\E$ be a $G$-bundle on $X$. Then there exists a unique standard parabolic subgroup $P$ of $G$ and a unique reduction $\E_P$ to $P$, such that \begin{enumerate}
	\item the associated $M$-bundle $\E_P\times_{P}M$ is semi-stable,
	\item  for any $\chi\in X^\ast(P/Z_G)\setminus \{0\}\cap \mathbb{N}.\Delta $, we have $\deg\, \chi_\ast\E_P> 0$.
\end{enumerate}
\end{theorem}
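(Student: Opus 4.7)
The plan is to deduce the $G$-bundle case from the Harder-Narasimhan theory of vector bundles on $X$ via the Tannakian formalism, following the now-standard strategy of Ramanathan and Behrend-Schieder. The key inputs from \cite{FF, F3} are: (a) for vector bundles on $X$ the HN filtration exists and is unique, since $\deg$ is well-defined on $\mathrm{Pic}(X)$ and the slope has the usual boundedness properties; and (b) on $X$ the tensor product of two semi-stable vector bundles is semi-stable with additive slopes. Property (b) is the feature of the Fargues-Fontaine curve that makes a $G$-version of HN reduction exist, and is the main global input.

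For existence, view $\E$ as an exact tensor functor $V\mapsto \E(V)$ from $\Rep\, G$ to $\Bun_X$, and take $\Fil^\bullet \E(V)$ to be the HN $\Q$-filtration of $\E(V)$ (indexed so that the jumps are the slopes). By (a) this is exact and functorial in $V$; by (b) it is compatible with tensor products, in the sense that the convolution of the HN filtrations of $\E(V_1)$ and $\E(V_2)$ coincides with the HN filtration of $\E(V_1\otimes V_2)$. Thus one has an exact $\otimes$-compatible $\Q$-filtration on the fiber functor $\E$. By the Tannakian theory of filtered fiber functors (Saavedra, Deligne, Ziegler), such a datum corresponds uniquely to a reduction $\E_P$ of $\E$ to a parabolic subgroup $P\subset G$, together with a slope cocharacter $\nu\in X_*(Z(M))_\Q$, where $M=P/R_uP$; after $G(\breve{F})$-conjugation we may assume $P$ is standard.

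Condition (1) is then immediate: each $M$-representation pulls back via $\E_P\times_P M$ to a vector bundle whose HN filtration is pure of the slope dictated by $\nu$ on the corresponding weight space, which is precisely the semi-stability of $\E_P\times_P M$. Condition (2) translates, via the natural pairing between $X^*(P/Z_G)$ and the slope vector, into the statement that $\nu$ lies in the strict interior of the cone determined by $P$, i.e. the $\Q$-filtration does not descend to any strictly larger standard parabolic; this is guaranteed by taking $P$ to be the smallest parabolic through which the tensor filtration factors, equivalently, the Levi $M$ canonically produced by the Tannakian construction.

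For uniqueness, any other pair $(P', \E_{P'})$ satisfying (1) and (2) induces a parabolic $\Q$-filtration on each $\E(V)$. Semi-stability of $\E_{P'}\times_{P'} M'$ forces each graded piece to be semi-stable, while condition (2) forces the slopes of those pieces to be strictly decreasing; this is exactly the characterization of the HN filtration of $\E(V)$, so it agrees with the one coming from $\E_P$. The Tannakian dictionary then yields $P=P'$ and $\E_P=\E_{P'}$. The main obstacle in the whole argument is the validity of (b) on the Fargues-Fontaine curve; this is the only step using the specific geometry of $X$ rather than formal Tannakian arguments, and is established in \cite{F3}.
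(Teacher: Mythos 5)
Your argument is correct in substance but follows a genuinely different route from the paper's. The paper disposes of this theorem by deferring to \cite{F3} 5.1 together with the arguments of \cite{BH}, i.e.\ the extremal/numerical construction of the canonical reduction: one considers the degree vectors attached to all reductions of $\E$ to all parabolic subgroups, shows they are bounded above in the poset $X_*(\AA)_\Q$ (the essential geometric input being that $\deg$ is well defined on $\mathrm{Pic}(X)$, i.e.\ completeness of the curve), and takes a reduction realizing the maximum; the group-theoretic arguments of Biswas--Holla then yield properties (1), (2) and uniqueness. You instead run the Tannakian filtered-fiber-functor argument, whose essential geometric input is the tensor-product theorem for semi-stable bundles on $X$ --- itself a consequence of the full classification theorem of \cite{FF} --- plus Ziegler-type splittability of $\otimes$-filtrations. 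Your route gives for free the compatibility of the canonical reduction with the HN filtrations of all the $\E(V)$ (a fact the paper needs anyway, e.g.\ around Theorem \ref{theoSchi}), at the cost of invoking the classification theorem; the route via \cite{BH} is more economical in its inputs and closer to the classical curve case.

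Two steps in your write-up deserve more care. First, exactness of $V\mapsto \Fil^\bullet\E(V)$ is \emph{not} a consequence of your input (a): the HN filtration is not exact on short exact sequences of vector bundles in general (the nonnegative part of the HN filtration of an extension can be strictly larger than the extension of the nonnegative parts). It holds here only because $\Rep\, G$ is semisimple ($F$ has characteristic $0$), so every short exact sequence of representations splits and exactness reduces to additivity under direct sums. Second, in the uniqueness step, the claim that semi-stability of $\E_{P'}\times_{P'}M'$ in the sense of Definition \ref{D:semistable} forces the graded pieces of the induced filtration on $\E(V)$ to be semi-stable \emph{vector bundles} is again not formal: it uses that a semi-stable $M'$-bundle pushed forward along a representation on which the connected center of $M'$ acts through a single character is a semi-stable vector bundle, which on $X$ is once more a consequence of the classification theorem. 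With these two points supplied (and a word on why the parabolic produced by the Tannakian formalism is constant along $X$, defined over $F$, hence conjugate to a standard one), the proof is complete.
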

\begin{proof}
See \cite{F3} 5.1, where one can apply the arguments of \cite{BH}.
\end{proof}
The reduction $\E_P$ in the above theorem is called the Harder-Narasimhan reduction of $\E$.
Let $\E$ be a $G$-bundle on $X$  with Harder-Narasimhan reduction $\E_P$. 
Then we get an element \[\nu_{\E}\in X_\ast(\AA)_\Q=X_\ast(\TT)_\Q^{\Gamma} \]
 by the Galois invariant morphism \[\begin{split}
X^\ast(P)&\lra\Z\\ \chi &\longmapsto \deg\,\chi_\ast\E_P
\end{split}\]
and the inclusion 
$$
\Hom_\Z ( X^* (P),\Z) \subset X_*(\TT)_\Q.
$$
In fact, we have $\nu_{\E}\in  X_\ast(\AA)_\Q^+$,  and moreover $M$ is the centralizer of $\nu_\E$. 
As in the classical theory of $G$-bundles on curves, the vector  $\nu_{\E}$ is called the Harder-Narasimhan polygon of $\E$ as an element
$$
\nu_\E \in \mathcal{N}(G)=X_\ast(\AA)_\Q^+.
$$

Later we will need the following. Recall that if $\E$ is a vector bundle on $X$ with Harder-Narasimhan filtration $(0)=\E_0\subsetneq \E_1\subsetneq \dots \subsetneq \E_r=\E$ then:
\begin{enumerate}
\item 
 for any subvector bundle $\Fc\subset \E$ the point $(\mathrm{rk}\, \E,\deg\, \E)$ lies under the Harder-Narasimhan polygon of $\E$,
 \item  this point lies on this polygon then there exists an index $i$ such that $\E_i\subset \Fc\subset \E_{i+1}$.
\end{enumerate}
Here is the generalization we will need.

\begin{theorem}[\cite{Schi} theo. 4.5.1]
\label{theoSchi}
For $\E$ a $G$-bundle on $X$ equipped with a reduction $\E_Q$ to the standard parabolic subgroup $Q$ consider the vector 
\begin{align*}
v: X^*(Q) & \longrightarrow \Z \\
\chi & \longmapsto \deg \, \chi_* \E_Q
\end{align*}
seen as an element of $X_*(\AA)_\Q$.  Then
\begin{enumerate}
\item One has $v\leq \nu_\E$.
\item If this inequality is moreover an equality and $\E_P$ is the canonical reduction of $\E$ then $Q\subset P$ and $\E_P \simeq \E_Q\times_Q P$. 
\end{enumerate}
\end{theorem}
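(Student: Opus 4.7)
The plan is to compare the given reduction $\E_Q$ with the canonical reduction $\E_P$ of Theorem~\ref{T:HN reduction} through their effect on associated vector bundles. The key observation is that the canonical reduction explicitly describes the Harder--Narasimhan filtration of every associated vector bundle $V(\E):=\E\times_G V$ for $V$ a rational representation of $G$. Indeed, the slope cocharacter $\nu_\E$ lies in the center of $M_P$, so the weight decomposition $V=\bigoplus_a V^a$ under $\nu_\E$ yields a $P$-stable decreasing filtration $F^\bullet V$; pushing out via $\E_P$ gives subbundles $F^aV(\E)=\E_P\times_P F^aV$ of $V(\E)$. Each associated graded $\mathrm{gr}^aV(\E)=(\E_P\times_P M_P)\times_{M_P}V^a$ is semistable of slope $a$, because $\E_P\times_P M_P$ is a semistable $M_P$-bundle by the defining property of the canonical reduction and $Z(M_P)$ acts on $V^a$ through characters $\mu$ satisfying $\langle\mu,\nu_\E\rangle=a$. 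Hence $F^\bullet V(\E)$ is the HN filtration of $V(\E)$.

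For part~(1), I apply the classical vector bundle HN inequality to subbundles of $V(\E)$ produced from $\E_Q$: for any $Q$-stable subspace $W\subset V$, the subbundle $W(\E_Q):=\E_Q\times_Q W$ of $V(\E)$ has its point $(\mathrm{rk},\deg)$ lying on or below the HN polygon of $V(\E)$. I would choose a faithful representation $V$ together with $Q$-stable subspaces of the form $W=\bigoplus_{\mu\in S}V^\mu$ for $S$ a $Q$-stable subset of the weights; then $\deg W(\E_Q)$ is a linear expression in $v$ through the characters of $M_Q$ appearing on $\det W$, while the bound $\deg F^{a_W}V(\E)$ is the corresponding expression in $\nu_\E$. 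Varying $V$ and $W$ one extracts $\langle\chi,v\rangle\leq\langle\chi,\nu_\E\rangle$ for every $\chi$ in the cone dual to $\Q_{\geq 0}\Phi_0^+$, which by duality of cones is precisely $v\leq\nu_\E$.

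For part~(2), equality $v=\nu_\E$ forces simultaneous equality in all the inequalities above. The rigidity statement for vector bundle HN filtrations (point~(2) of the fact recalled just before the statement) then implies that each $W(\E_Q)\subset V(\E)$ coincides with a piece $F^{a_W}V(\E)$ of the HN filtration coming from $\E_P$. Taking $V$ a faithful representation and varying $W$, this matching of induced filtrations propagates to a $G$-equivariant identification of $\E_Q$ as a $Q$-reduction refining the $P$-reduction $\E_P$, giving $Q\subset P$ together with $\E_P\simeq \E_Q\times_Q P$. The main obstacle will be in part~(1): converting the scalar degree inequalities for individual subbundles $W(\E_Q)\subset V(\E)$ into the vectorial inequality $v\leq\nu_\E$ in $X_*(\AA)_\Q$ requires a careful representation-theoretic dictionary between $Q$-subreps of $G$-representations and characters in $X^*(Q)$, together with the choice of enough representations to cover all rational directions in the coroot cone. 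Once this translation is set up, both parts follow mechanically from the vector bundle HN theorem applied to $V(\E)$ combined with the description of its HN filtration via the canonical reduction.
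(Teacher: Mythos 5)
The paper does not actually prove this statement: it is imported wholesale from Schieder (\cite{Schi}, theo.\ 4.5.1), where it is established by quite different, global methods (Drinfeld's compactifications of $\Bun_G$). So your Tannakian route via associated vector bundles is necessarily a different proof; it is in the spirit of \cite{BH}, which the paper cites for the existence of canonical reductions, and the overall strategy --- identify the HN filtration of every $V(\E)$, $V\in \Rep\, G$, with the filtration induced by $\E_P$ and the weight decomposition under $\nu_\E$, then test $Q$-stable subspaces against it --- is sound. One pleasant simplification you could exploit in part (2): since $v$ lies in $X_*(Z_{M_Q})_\Q$ by construction and $\nu_\E$ has centralizer exactly $M_P$, the equality $v=\nu_\E$ already forces $M_Q\subset M_P$ and hence $Q\subset P$ with no bundle-theoretic input at all; only the isomorphism $\E_P\simeq \E_Q\times_Q P$ requires geometry.

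That said, both of the steps you flag as delicate are genuinely incomplete as written. In part (1), the inequality $v\leq \nu_\E$ means $\langle \tilde{\omega}_\alpha, \nu_\E - v\rangle\geq 0$ for \emph{every} $\alpha\in\Delta_0$ (plus agreement of central parts). For $\alpha\in\Delta_0\setminus\Delta_{0,M_Q}$ a multiple of $\tilde{\omega}_\alpha$ lies in $X^*(Q)$ and your scheme works: choose $V$ with a suitable highest weight and $W$ a step of the filtration defined by a cocharacter dominant for $Q$, so that $\dim W$ is a break point of the HN polygon of $V(\E)$ and $\deg W(\E_Q)$ computes $\langle N\tilde{\omega}_\alpha, v\rangle$. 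But for $\alpha\in\Delta_{0,M_Q}$ the quantity $\langle\tilde{\omega}_\alpha, v\rangle$ is \emph{not} the degree of any pushforward of $\E_Q$, and $\dim W$ for the available $Q$-stable $W$ will generally not be a vertex of the HN polygon; you must interpolate using concavity of that polygon (exactly as in the $\GL_n$ case, where the polygon of an arbitrary, unsorted filtration lies below the HN polygon at non-break abscissae as well). Without this interpolation the cone-duality conclusion is unjustified. In part (2), the claim that $v=\nu_\E$ ``forces simultaneous equality in all the inequalities above'' is false for a general $Q$-stable $W$: equality gives $\deg W(\E_Q)=\langle \det W,\nu_\E\rangle$, which for generic $W$ lies \emph{strictly below} the HN polygon, so the rigidity statement does not apply. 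You must restrict to the subspaces $W=F^aV$ cut out by $\nu_\E$ itself (these are $Q$-stable precisely because $v=\nu_\E$ is central in $M_Q$), for which $\dim W$ is a vertex; there the squeezing statement $\E_i\subset\Fc\subset\E_{i+1}$ combined with equality of ranks and degrees identifies $W(\E_Q)$ with the HN step, and only then does the Tannakian reconstruction of $\E_P$ from these filtrations go through.
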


As a corollary the vector $\nu_\E$ can be defined as being the supremum of all such vectors $v$ associated to all possible reductions $\E_Q$ in the poset $X_* (\AA)_\Q$.

\subsection{The Harder-Narasimhan polygon in general and the first Chern class}
\label{sec:HN polygon in general}

Suppose now that $G$ is not necessarily quasi-split. Recall that the inner twisting $G_{\Fbar}\xrightarrow{\sim} H_{\Fbar}$ induces an identification 
\begin{align*}
\mathcal{N} (G) \iso \mathcal{N}(H)  
= X_*(\AA)_\Q^+.
\end{align*}

There is a canonical $\DD$-torsor $\mathscr{T}$ on $X\otimes_F \Fun$. More precisely, if $F_h|F$ is the degree $h$ unramified extension, we have
$$
X_F\otimes_F F_h =X_{F_h},
$$
the curve attached to $F_h$.
If $\pi_h:X_{F_h}\rightarrow X_F$ is the natural projection,
one has a canonical identification 
$$
\pi_h^* \Ol_{X_F} (1) = \Ol_{X_{F_h}} (1)^{\otimes h}.
$$
The compatible system of $\G_m$-torsors attached to $\big ( \Ol (X_{F_h})(1) \big )_{h\geq 1}$  defines $\mathscr{T}$.
As a consequence of the classification theorem of \cite{F3}, any $G$-torsor on $X_{\Fun}$ has a reduction to a torus and we have

\begin{theorem}
The pushforward of the $\mathbb{D}$-torsor $\mathscr{T}$ induces a bijection 
$$
\Hom ( \mathbb{D}_{\Fun}, G_{\Fun} ) / G(\Fun) \iso H^1_{\text{\'et}} (X_{\Fun}, G).
$$
\end{theorem}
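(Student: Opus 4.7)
The strategy is to classify $T$-torsors on $X_{\Fun}$ for $T$ a maximal torus of $G_{\Fun}$, using the already-noted consequence of Theorem~\ref{T: G-bundles} that every $G$-torsor on $X_{\Fun}$ admits a reduction to such a $T$.

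For well-definedness of the map, given $\varphi\colon \mathbb{D}_{\Fun}\to G_{\Fun}$ and $g\in G(\Fun)$, the formula $(t,h)\mapsto (t,gh)$ yields a canonical isomorphism $\mathscr{T}\times^{\mathbb{D},\varphi} G \simeq \mathscr{T}\times^{\mathbb{D},g\varphi g^{-1}} G$, so the assignment $[\varphi]\mapsto [\mathscr{T}\times^{\mathbb{D},\varphi} G]$ descends to $G(\Fun)$-conjugacy classes.

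For surjectivity, let $\E$ be a $G$-torsor on $X_{\Fun}$ with reduction $\E_T$ to a maximal torus $T\subset G_{\Fun}$. To classify $T$-torsors I would first compute $\tr{Pic}(X_{\Fun}) = \varinjlim_h \tr{Pic}(X_{F_h}) = \varinjlim_h \Z = \Q$, where the transition maps are multiplication by $h'/h$ (coming from $\pi_{h,h'}^\ast \Ol_{X_{F_h}}(1) = \Ol_{X_{F_{h'}}}(1)^{\otimes h'/h}$). For a split $T$ this gives $H^1(X_{\Fun}, T) = \tr{Pic}(X_{\Fun})\otimes X_\ast(T) = X_\ast(T)_{\Q} = \Hom(\mathbb{D}_{\Fun}, T_{\Fun})$, and by the very construction of $\mathscr{T}$ the torsor attached to $\psi\colon \mathbb{D}\to T$ is exactly $\mathscr{T}\times^{\mathbb{D},\psi} T$. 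For non-split maximal tori, the same statement holds by Galois descent from a finite unramified subextension of $\Fun$ splitting $T$. Composing $\psi$ with $T\hookrightarrow G$ then gives $\varphi$ realizing $\E\simeq \mathscr{T}\times^{\mathbb{D},\varphi} G$.

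For injectivity, suppose $\mathscr{T}\times^{\mathbb{D},\varphi} G \simeq \mathscr{T}\times^{\mathbb{D},\varphi'} G$. I would use the canonical slope decomposition: every vector bundle on $X_{\Fun}$ splits uniquely as $\bigoplus_\lambda \Ol(\lambda)^{n_\lambda}$, so the Tannakian fiber functor of each $G$-torsor carries a canonical $\mathbb{D}$-grading, the centralizer of whose image recovers the Levi containing $\varphi$. The isomorphism transports this grading, producing $g\in G(\Fun)$ with $\varphi' = g\varphi g^{-1}$. I expect the main obstacle to be the non-split torus case of surjectivity: since $G(\Fun)$-conjugacy is generally coarser than $G(\overline{\Fun})$-conjugacy, the descent of $H^1(X_{\Fun},T)$ from a splitting field of $T$ back to $\Fun$ requires a careful Galois-equivariance argument to produce $\varphi$ canonically defined over $\Fun$ and well-defined modulo $G(\Fun)$-conjugacy.
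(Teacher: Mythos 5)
Your overall architecture --- reduce to a maximal torus, compute $\mathrm{Pic}(X_{\Fun})=\Q$ with the degree map, and recover $\varphi$ from the canonical slope grading --- is the right one, and it is essentially what one finds on unwinding the reference: the paper itself gives no argument beyond citing the classification theorem of \cite{F3} together with the remark that every $G$-torsor on $X_{\Fun}$ reduces to a torus. The well-definedness step and the split-torus computation ($\mathrm{Pic}(X_{\Fun})=\varinjlim_h \Z=\Q$ via $\pi_{h,h'}^*\Ol_{X_{F_h}}(1)=\Ol_{X_{F_{h'}}}(1)^{\otimes h'/h}$) are correct.

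The non-split torus case, which you yourself single out as the main obstacle, is however a genuine gap, and the mechanism you propose cannot work as stated. A maximal torus $T\subset G_{\Fun}$ that is not split cannot be split by an unramified extension: $\Fun$ already contains every unramified extension of $F$, so any splitting field $L$ of $T$ is a nontrivial \emph{ramified} Galois extension of $\Fun$, say with group $\Delta$. Descent along $L/\Fun$ then does not simply return the $\Delta$-invariants of $H^1_{\et}(X_L,T_L)$: the Hochschild--Serre sequence contributes terms $H^i(\Delta,T(L))$ for $i=1,2$ (using $H^0(X_L,\Ol)=L$), and $H^1(\Delta,T(L))\cong H^1(\Fun,T)$ is in general nonzero for a non-split torus. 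One must argue separately that these classes die in $H^1_{\et}(X_{\Fun},T)$, e.g.\ because the target $\Hom(\DD_{\Fun},T)$ is a $\Q$-vector space while $H^1(\Fun,T)$ is torsion. The clean route, implicit in the paper, avoids ramified descent altogether: $T$ is defined over some finite unramified $F_h|F$, the torus case of the classification gives $H^1_{\et}(X_{F_{h'}},T)\cong X_*(T)_{\Gal(\Fbar|F_{h'})}\cong B(T_{F_{h'}})$ (\cite{F3} theo.\ 2.8, invoked just below for the construction of $c_1^G$), and passing to the colimit over $h'$ --- where the transition maps multiply degrees by $h''/h'$ and hence kill all torsion --- yields $H^1_{\et}(X_{\Fun},T)\cong \big(X_*(T)_{\Gal(\Fbar|\Fun)}\big)_\Q=\Hom(\DD_{\Fun},T)$. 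A smaller issue of the same nature sits in your injectivity step: the slope grading only shows that $\varphi$ and $\varphi'$ are conjugate geometrically (their composites with every representation have the same weights), and to produce $g\in G(\Fun)$ you still need the vanishing of $H^1(\Fun,Z_G(\varphi))$ for the transporter torsor, which holds by Steinberg's theorem because the centralizer of $\varphi$ is a connected (Levi) subgroup.
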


Together with the pullback from $X$ to $X_{\Fun}$, this defines a map
\begin{align*}
H^1_{\text{\'et}} (X, G) & \longrightarrow  \mathcal{N}(G) \\
 [\E ]  & \longmapsto \nu_{\E}.
\end{align*}
One can moreover define the $G$-equivariant first Chern class of $\E$ as a map (\cite{F3})
$$
c_1^G : H^1_{\text{\'et}} (X,G)\longrightarrow \pi_1 (G)_\Gamma.
$$
This generalizes the degree of a vector bundle for $G=\GL_n$. 
The quickest way to define it is through abelianized cohomology in the topos $X_{\et}$ (\cite{Breen}) via the map
$$
H^1_{\et} ( X, G) \longrightarrow H^1_{\et} ( X , [G_{sc}\rightarrow G]),
$$
the homotopy equivalence $[T_{sc}\rightarrow T] \rightarrow [G_{sc}\rightarrow G]$ for a maximal torus $T$ in $G$, and the canonical isomorphism $X_* (S)_\Gamma \xrightarrow{\sim} H^1_{\et} ( X,S)$ for a torus $S$ (\cite{F3} theo. 2.8) (see the next subsection for more explanations in the case of $B(G)$ for this type of abelianization construction).

\subsection{Newton map and Kottwitz map}\label{Newton and Kottwitz map}

We keep the same notations. 
 The set $B(G)$ of $\sigma$-conjugacy classes in $G(\Fb)$ can be described by two invariants. One invariant is the Newton map (\cite{Kot1} sec. 4).
For each $b\in G(\Fb)$ on can attach a slope morphism 
$$
\nu_b:\DD_{\Fb}\longrightarrow G_{\Fb}.
$$
 Up to $\s$-conjugating $b$ one can suppose that it is defined over $\Fun$ (the Tannakian category of isocrystals has a fiber functor over $\Fun$). Moreover, since $\nu_b^\s = b^{-1} \nu_b b$, its conjugacy class is defined over $F$. We thus obtain an application
 \begin{align*}
 \nu: B(G) & \longrightarrow \mathcal{N}(G) \\
 [b] & \longmapsto [\nu_b].
 \end{align*} 
The other invariant is the Kottwitz map (\cite{RR}  1.15, \cite{Kot2} 4.9 and 7.5) 
\begin{align*} \kappa: B(G)&\lra \pi_1(G)_\Gamma\\
[b]&\longmapsto \kappa([b]).\end{align*} 
In the following we will simply write $\kappa([b])$ as $\kappa(b)$.
The quickest way to define the Kottwitz map is via the abelianization of Kottwitz set \`a la Borovo\"i. More precisely, if we set 
$$
B_{ab} (G) := H^1( \s^\Z, [G_{sc} (\Fb) \drt G (\Fb)] )
$$
(cohomology with coefficient in a crossed module where $G(\Fb)$ is in degree $0$) there is an abelianization map
$$
B(G)\longrightarrow B_{ab} (G)
$$
induced by $[ 1\rightarrow G(\Fb)] \rightarrow [G_{sc} ( \Fb)\rightarrow G(\Fb) ]$. Moreover, if $T$ is a maximal torus in $G$ then $[T_{sc}\rightarrow T]\rightarrow [G_{sc}\rightarrow G]$ is an homotopy equivalence and this induces an isomorphism 
$$
H^1 ( \s^\Z , [T_{sc}(\Fb) \rightarrow T(\Fb)] ) \iso B_{ab}(G).
$$
Now the left member is identified with the cokernel of 
$$
B(T_{sc})\longrightarrow B (T)
$$
which, according to Kottwitz, is the same as the cokernel of 
$$
X_*(T_{sc})_\Gamma\longrightarrow X_* (T)_\Gamma
$$
that is to say $\pi_1(G)_\Gamma$. This is canonically defined independently of the choice of $T$ since the Weyl group of $T$ acts trivially on the cokernel of $X_* (T_{sc})\rightarrow X_* (T)$.
\\

The induced map
\[(\nu, \kappa): B(G)\lra \mathcal{N}(G) \times \pi_1(G)_\Gamma\] is injective (\cite{Kot2} 4.13). Let $B(G)_{basic}\subset B(G)$ be the subset of basic elements. Then the restriction of $\kappa$ to $B(G)_{basic}$ induces a bijection
\[\kappa: B(G)_{basic}\st{\sim}{\lra} \pi_1(G)_\Gamma \]
(\cite{Kot1} prop. 5.6, \cite{RR} theo. 1.15).
In view of Theorem \ref{T: G-bundles}, the two maps $\nu$ and $\kappa$ have the following geometric interpretations in terms of $G$-bundles.
\begin{theorem}[\cite{F3} prop. 6.6 and prop. 8.1]\label{T:NK}
We have
\begin{enumerate}
	\item $\nu_{\E_b}=w_0(-[\nu_b])$, where $w_0$ is the element of longest length in the Weyl group acting on $X_* (\AA)_\Q$.
	\item $ c_1^G ( \E_b)=- \kappa (b)$.
\end{enumerate}
\end{theorem}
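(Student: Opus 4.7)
Both statements reduce at once to the case $G=H$ quasi-split via the inner twisting: the invariants $\nu_{\E_b}$, $c_1^G(\E_b)$, $\nu_b$ and $\kappa(b)$ are all defined through the canonical identifications $\mathcal{N}(G)=\mathcal{N}(H)$ and $\pi_1(G)_\Gamma=\pi_1(H)_\Gamma$, and the bijection of Theorem~\ref{T: G-bundles} is compatible with inner twists (one can see this via the description over $\Fun$ using the $\mathbb{D}$-torsor $\mathscr{T}$). So assume $G=H$ in what follows.

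For (1), the plan is to produce the canonical Harder-Narasimhan reduction of $\E_b$ from a standard form of $b$. By the Kottwitz slope decomposition, up to $\sigma$-conjugation $b$ lies in a standard Levi $M\subset G$ equal to the centralizer of the dominant slope morphism $\nu_b$, with $\nu_b$ central in $M$. This gives a tautological reduction $\E_{b,M}$ of $\E_b$ to $M$. First, $\E_{b,M}$ is semi-stable as an $M$-bundle: for any reduction of $\E_{b,M}$ to a standard parabolic $Q\subset M$ and any $\chi\in X^*(Q/Z_M)$, the degree computation below yields $\deg\chi_*\E_{b,M,Q}=-\langle\chi,\nu_b\rangle=0$ since $\chi$ is trivial on $Z(M)$. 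Second, for $\chi\in X^*(M)^\Gamma$, $\chi_*\E_{b,M}$ is the line bundle associated to the rank-one isocrystal $(\breve F,\chi(b)\sigma)$; since the $\G_m$-bundle attached to $\pi_F^n\cdot\sigma$ is $\Ol(-n)$, one obtains $\deg\chi_*\E_{b,M}=-v_F(\chi(b))=-\langle\chi,\nu_b\rangle$, so the vector in $X_*(\AA)_\Q$ attached to this reduction is $-\nu_b$. Since $\nu_b$ is dominant, $-\nu_b$ is antidominant; passing to a Weyl-translated $\sigma$-conjugate whose Levi is standard and whose associated vector lies in the dominant chamber yields the standard-parabolic reduction with vector $w_0(-\nu_b)\in X_*(\AA)_\Q^+$, and Theorem~\ref{theoSchi}(2) together with the supremum characterization of $\nu_{\E}$ identifies this as the HN polygon $\nu_{\E_b}$.

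For (2), both $\kappa$ and $c_1^G$ are defined via the same Borovo\"i-style abelianization of the crossed module $[G_{sc}\to G]$. The Kottwitz map factors as $B(G)\to B_{ab}(G)\iso \pi_1(G)_\Gamma$, while the first Chern class factors as $H^1_{\et}(X,G)\to H^1_{\et}(X,[G_{sc}\to G])$, both identified for a maximal torus $T\subset G$ via the homotopy equivalence $[T_{sc}\to T]\to[G_{sc}\to G]$. Using the functoriality of the bijection $B(-)\iso H^1_{\et}(X,-)$ (Theorem~\ref{T: G-bundles}) under the morphism $G_{sc}\to G$, it suffices to compare the two composite maps on $T$, where the sets $B(T)$ and $H^1_{\et}(X,T)$ are canonically identified with $X_*(T)_\Gamma$. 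Compatibility then reduces to $T=\G_m$: there $b=\pi_F^n$ gives $\kappa(b)=n$ while $\E_b=\Ol(-n)$ has first Chern class $-n$, producing the expected sign.

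The main obstacle is the Weyl-translation and semi-stability bookkeeping in (1): the degree computation and appeal to Theorem~\ref{theoSchi} are routine, but selecting the correct standard Levi whose reduction yields the dominant representative $w_0(-\nu_b)$ requires care, and verifying that $\E_{b,M}$ is semi-stable (via the central-slope argument) is the essential structural input. In (2), the remaining subtlety is the compatibility of the bijection of Theorem~\ref{T: G-bundles} with the crossed-module abelianizations built from $G_{sc}\to G$; this is technical but follows by unraveling the constructions over $\Fun$ and the classification of $T$-bundles.
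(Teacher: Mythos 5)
The paper gives no proof of this statement: it is quoted verbatim from \cite{F3} (prop.\ 6.6 and 8.1), so there is nothing internal to compare against. Judged on its own merits, your outline for (2) is essentially the standard argument (reduce both abelianizations to a maximal torus, then to $\G_m$, where $\E(\Fb,\pi_F^n\sigma)=\Ol(-n)$ produces the sign), and the overall architecture of (1) — exhibit the canonical reduction of $\E_b$ coming from the Levi $M=C_G(\nu_b)$ in which $b$ is basic, compute its degree vector as $-\nu_b$, and identify the dominant representative $w_0(-\nu_b)$ via the characterization of the HN reduction — is also the right one.

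However, there is a genuine gap in (1), at precisely the step you label as ``the essential structural input'': the semi-stability of $\E_{b_M}$ for $b_M$ basic in $M$. Your argument computes $\deg\,\chi_*\E_{b,M,Q}=-\langle\chi,\nu_b\rangle$ for a reduction of $\E_{b_M}$ to a parabolic $Q\subset M$, but that formula is only valid for reductions \emph{induced by reductions of the isocrystal} $b_M$ to $Q$. Semi-stability is a condition on \emph{all} reductions of the bundle, and most of these do not arise from sub-isocrystals — this is exactly the asymmetry the paper emphasizes in section \ref{sec:weak ad locus} (reductions of $b$ to $P$ give reductions of $\E_b$ to $P$, but not conversely), and it is the whole reason weak admissibility and admissibility differ. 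As stated, your argument is circular: the inequality $\deg\,\chi_*\E_{b,M,Q}\leq 0$ for arbitrary reductions is what semi-stability asserts, not something the slope of $b$ gives you for free. The statement ``$b'$ basic $\Rightarrow$ $\E_{b'}$ semi-stable'' is a deep fact: for $\GL_n$ it amounts to the semi-stability of the bundles $\Ol(\lambda)$ (equivalently, the vanishing of $H^0(X,\Ol(\lambda))$ for $\lambda<0$), which is part of the Fargues--Fontaine classification theorem and ultimately rests on ``weakly admissible implies admissible'' / the comparison of the Lubin--Tate and Drinfeld towers; for general $G$ one then transports it through the adjoint or a faithful representation. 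Your proof must invoke this input explicitly; without it the argument does not close. A secondary, more minor point: the reduction of (1) to the quasi-split case deserves a word of justification, since for non-quasi-split $G$ the invariant $\nu_\E$ is \emph{defined} by pullback to $X_{\Fun}$ (where $G$ becomes quasi-split by Steinberg's theorem), so the cleanest route is to prove the formula over $\Fun$ and descend, rather than to appeal to a compatibility of Theorem \ref{T: G-bundles} with inner twisting.
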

A particular case of the above (1) says that {\it $\E$ is semi-stable if and only if $[b_\E]\in B(G)$ is basic.}

\begin{example}
If  $G_{der}$is  simply connected one has
$$
\begin{tikzcd}
\{G\text{-bundles} / X \}/\sim  \ar[rr,"\text{push forward}"] \ar[rrr, bend right=15,"c_1^G"'] 
 && \{ G/G_{der}\text{-bundles} / X\}/\sim \ar[r, "\sim"]  &\pi_1 (G)_\Gamma 
\end{tikzcd}
$$
where, when $G$ is quasi-split, the first map is a bijection when restricted to semi-stable bundles. 
\end{example}

%

\section{The weakly admissible locus}
\label{sec:weak ad locus}

\subsection{Background}

Let  $\{\mu\}$ be a geometric conjugacy class of a cocharacter $\mu: \G_m\ra G_{\Fbar}$. 
Unless clearly stated otherwise  $\mu$ is supposed to be minuscule.
The pair $(G, \{\mu\})$ will be fixed in the rest of this paper.  We will sometimes see $\{\mu\}$ as an element of $X_*(\TT)^+$ that we again denote $\mu$ by abuse of notation.
  We get the following associated objects:
\begin{itemize}
	\item the local reflex field $E=E(G, \{\mu\})$, a finite extension of $F$ inside $\Fbar$, which is the field of definition of $\{\mu\}$,
	\item the flag variety $\Fc(G,\mu)$ over $\breve{E}$, where $\breve{E}$ is the completion of the maximal unramified extension of $E$ (in fact $\Fc(G,\mu)$ is defined over $E$, but we will not need this fact),
	\item and the Kottwitz set \[B(G,\mu)=\{[b]\in B(G)\;| \; [\nu_{b}]\leq \mu^\diamond, \kappa(b)=\mu^\sharp\},\] which is a finite subset of $B(G)$; where as usual \[\mu^\diamond=[\Gamma:\Gamma_\mu]^{-1}\sum_{\tau\in \Gamma/\Gamma_\mu}\mu^\tau \in X_* (\AA)_\Q^+=\big (X_*(\TT)^{+}_\Q)^{\Gamma},\] and $\mu^\sharp\in \pi_1(G)_\Gamma$ is the image of $\mu$ via $\pi_1 (G)=X_*(\TT)/X_*(\TT_{sc})$.
\end{itemize}

In the preceding definition of $B(G,\mu)$, the condition $[\nu_b]\leq \mu^\diamond$ implies that $\kappa (b)-\mu^{\sharp} \in \pi_1 (G)_{\Gamma,tor}= H^1(F,G)$. The condition $\kappa (b)=\mu^\sharp$ requires that this cohomology class is trivial.
\\

In the following we will also denote by $\Fc(G,\mu)$ the associated adic space over $\Spa(\breve{E})$.
Fix an element $$[b]\in B(G).$$  
We get
the reductive group $J_b$ over $F$, the $\s$-centralizer of $b$,
 which only depends on $[b]$ up to isomorphism. For any $F$-algebra $R$, we have \[J_b(R)=\{g\in G(\breve{F}\otimes_F R)|\; gb\sigma(g)^{-1}=b\}.\]
The  group $J_b(F)$ acts naturally on the flag variety $\Fc(G,\mu)$ over $\breve{E}$ via $J_b(F)\subset G(\Fb)$.
For the triple $(G,\{\mu\}, [b])$ as above,
let us recall the definition of the weakly admissible locus (\cite{RZ} chap. 1)
 \[\Fc(G,\mu, b)^{wa}\subset \Fc(G,\mu).\]

First we recall the definition of a weakly admissible filtered isocrystal. Let $K|\Fb$ be a complete field extension. A filtered isocrystal $\V=(V, \varphi, \Fil^\bullet V_K)$ is called weakly admissible if for any subobject $\V'=(V', \varphi, \Fil^\bullet V_K')$ of $\V$, with $V'$ a $\ph$-stable $\Fb$-subspace of $V$ and $\Fil^\bullet V_K'=V_K'\cap \Fil^\bullet V_K$, we have
\[t_H(\V)=t_N(\V) \quad \tr{and}\quad t_H(\V')\leq t_N(\V')\] where $t_N(\V)=v_{\pi_F} (\det \varphi)$ is the $\pi_F$-adic valuation of $\det \ph$, and \[t_H(\V)=\sum_{i\in\Z}i\cdot \dim_K gr^i_{\Fil^\bullet}(V_K).\] If we define the slope of $\V$  as \[\mu(\V)=\frac{t_H(\V)-t_N(\V)}{\dim V},\]then $\V$ is weakly admissible if and only if it is semi-stable (for the slope function $\mu$) and $\mu(\V)=0$.

 If $K|\Fb$ is a finite extension, then  the category of weakly admissible filtered isocrystals is equivalent to the category of crystalline representations of $\Gal(\ov{K}|K)$ with coefficients in $F$ whose Sen operator is $F$-linear (\cite{CoFo}), that is to say weakly admissible is equivalent to admissible. We refer to chapter 
10 of \cite{FF} for a proof of this result using the curve (this proof was a huge inspiration to study modifications of vector bundles on the curve).
\\

Let us come back to the general setting. Let $\mu'\in \{\mu\}$ be defined over $K|\Fb$.
There is a functor \[\begin{split} \I_{b,\mu'}: \Rep\, G&\lra \varphi\mathrm{-}\Fil\Mod_{K/\Fb} \\ (V,\rho) &\longmapsto (V_{\Fb}, \rho(b)\sigma, \Fil_{\rho\circ\mu'}^\bullet V_K).\end{split}\]
Let us recall that the category of weakly admissible filtered isocrystals is Tannakian.

\begin{definition}[\cite{RZ} def. 1.18]\label{D:wa}
We call the pair $(b,\mu')$ weakly admissible if one of the following equivalent conditions is satisfied:
\begin{enumerate}
	\item for any $(V,\rho)\in \Rep \, G$, the filtered isocrystal $\I_{b,\mu'}(V,\rho)$ is weakly admissible;
	\item there is a faithful representation $(V,\rho)$ of $G$ such that $\I_{b,\mu'}(V,\rho)$ is weakly admissible.
\end{enumerate}
\end{definition}
It is also equivalent to the condition that its image in $G/G_{der}$ is weakly admissible (i.e. $
[b]\in A(G,\mu)$, cf. prop. 2.2) and 
$\I_{b,\mu'}(\Lie\, G)$ is weakly admissible for the adjoint representation $\Lie\, G$ of $G$, cf. \cite{DOR} def. 9.2.14 and coro. 9.2.26. 
We will give an equivalent geometric definition of weakly admissible in terms of $G$-bundles on the Fargues-Fontaine curve later. 
\\

Now for any point $x\in \Fc (G,\mu)(K)$ we have the associated filtration $\Fil_x$ of $\Rep\, G$ coming from a cocharacter $\mu_x\in \{\mu\}$ defined over $K$. Then $x$ is called weakly admissible if the pair $(b, \mu_x)$ is weakly admissible in the above sense. By proposition 1.36 of \cite{RZ}, this defines a partially proper open subspace \[\Fc(G,\mu, b)^{wa}\subset \Fc(G,\mu),\] such that $\Fc(G,\mu, b)^{wa}(K)$ is the set of weakly admissible points in $ \Fc(G,\mu)(K)$. It is of the form 
$$
\Fc(G,\mu) \setminus \bigcup_{i\in I} J_b (F). Z_i
$$
where $(Z_i)_{i\in I}$ is a finite collection of Zariski closed Schubert varieties. The action of $J_b(F)$ on $\Fc(G,\mu)$ stabilizes the subspace $\Fc(G,\mu,b)^{wa}$. 
\\

Recall the following basic fact.
\begin{proposition}[\cite{RV} prop. 3.1]\label{P:wa non empty}
The open subset 
$\Fc(G,\mu,b)^{wa}$ is non empty if and only if $[b]\in A(G, \mu):=\{[b]\in B(G)\;| \;\ [\nu_{b}]\leq \mu^\diamond \}$.
\end{proposition}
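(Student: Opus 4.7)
The plan is to treat the two implications separately. The forward direction follows almost immediately from the definition of weak admissibility combined with the Tannakian characterization of the partial order $\leq$ on $\mathcal{N}(G)$, whereas the backward direction is deeper and rests on the classical existence theorem of Fontaine--Rapoport for weakly admissible filtrations in the $\GL_n$ setting.

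\smallskip

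\emph{Forward direction.} Suppose $x \in \Fc(G,\mu,b)^{wa}(K)$ for some complete extension $K|\breve{E}$. For every representation $(V,\rho)\in \Rep\,G$, the filtered isocrystal $\I_{b,\mu_x}(V,\rho)$ is weakly admissible; in particular its Newton polygon lies below its Hodge polygon with matching endpoints in $\mathcal{N}(\GL(V))$. I would then invoke the standard fact (essentially Rapoport--Richartz) that the partial order on $\mathcal{N}(G) = X_\ast(\boldsymbol{A})^+_\Q$ is detected by passing through all representations $\rho$ of $G$ and comparing the induced Newton and Hodge polygons in $\mathcal{N}(\GL(V))$ (equivalently, by pairing with all dominant characters of $\boldsymbol{T}$). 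The collection of these representation-wise inequalities is exactly $[\nu_b] \leq \mu^\diamond$ in $\mathcal{N}(G)$, giving $[b]\in A(G,\mu)$.

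\smallskip

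\emph{Backward direction.} Suppose $[b]\in A(G,\mu)$, so $[\nu_b]\leq \mu^\diamond$. Fix a faithful representation $\rho: G\hookrightarrow \GL(V)$; then the push-forward polygons satisfy the analogous inequality in $\mathcal{N}(\GL(V))$. The Fontaine--Rapoport existence theorem for $\GL_n$ produces, after extending scalars to some complete extension $K|\breve{E}$, a weakly admissible filtration on $(V_{\breve{F}}, \rho(b)\sigma)\otimes_{\breve{F}} K$ of Hodge type $\rho\circ\mu$. The key remaining task is to arrange that this filtration comes from an honest $G$-structure, i.e., from a point $x\in \Fc(G,\mu)(K)$ rather than only a point of $\Fc(\GL(V),\rho\circ\mu)(K)$. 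I would achieve this by combining two ingredients: first, that weak admissibility is a Zariski open and $J_{\rho(b)}(F)$-stable condition on $\Fc(\GL(V),\rho\circ\mu)$; and second, a genericity argument along the closed embedding $\Fc(G,\mu)\hookrightarrow \Fc(\GL(V),\rho\circ\mu)$ showing that the image of the geometrically irreducible $\Fc(G,\mu)$ cannot be contained in the Zariski closed, weakly-non-admissible complement cut out by the finitely many Schubert varieties of the target.

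\smallskip

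\emph{Main obstacle.} The hard step is clearly this last transfer: Fontaine--Rapoport produces a weakly admissible filtration on the underlying vector space $V$, but a priori nothing forces this filtration to be compatible with the $G$-structure carried by the isocrystal. Making the transfer work---i.e.\ exhibiting a weakly admissible filtration that actually lives on the closed subscheme $\Fc(G,\mu)\subset \Fc(\GL(V),\rho\circ\mu)$---is essentially the content of \cite{RV} prop. 3.1, and is where all the genuine work of the proposition is concentrated.
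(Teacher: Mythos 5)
The paper itself gives no proof of this proposition --- it is quoted verbatim from \cite{RV} prop.~3.1 (see also \cite{DOR} thm.~9.5.10) --- so the comparison is with the external references. Your forward direction is correct and standard: weak admissibility of $\I_{b,\mu_x}(V,\rho)$ gives Mazur's inequality between the Newton and Hodge polygons of $(\rho(b),\rho\circ\mu)$ with equal endpoints for every $\rho$, and the Rapoport--Richartz lemma that the dominance order on $\mathcal{N}(G)$ is detected by passing through all representations converts this family of inequalities into $[\nu_b]\leq\mu^\diamond$.

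The backward direction, however, has a genuine gap, located exactly at the ``genericity'' step. There are two problems. First, the non-weakly-admissible locus of $\Fc(\GL(V),\rho\circ\mu)$ is $\bigcup_{i\in I}J_{\rho(b)}(F).Z_i$ with $I$ finite but $J_{\rho(b)}(F)$ infinite: it is closed for the adic topology, not Zariski closed, so ``geometrically irreducible, hence not contained in a proper Zariski closed subset'' does not apply. Second, and more decisively, no genericity argument can close this gap, because the statement it would establish --- that $\Fc(G,\mu)$ meets the weakly admissible locus of $\Fc(\GL(V),\rho\circ\mu,\rho(b))$ whenever the latter is non-empty --- is false; it would prove the proposition under the strictly weaker hypothesis $\rho_*[\nu_b]\leq(\rho\circ\mu)^\diamond$ for a \emph{single} faithful $\rho$, contradicting your own forward direction. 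Concretely, let $G$ be the split torus $\G_m^2$ with $\rho$ the standard two-dimensional representation, $b=\mathrm{diag}(1,\pi_F)$ and $\mu=(1,0)$. In $\mathcal{N}(\GL_2)$ both $\rho_*[\nu_b]$ and $(\rho\circ\mu)^\diamond$ equal the dominant vector $(1,0)$, so $\Fc(\GL_2,\rho\circ\mu,\rho(b))^{wa}$ is a non-empty open subset of $\PP^1$ (the complement of the point $[e_1]$); yet $\Fc(G,\mu)$ is the single point $\Fil^1V_K=Ke_1$, which is not weakly admissible since the $\varphi$-stable line $\breve{F}e_1$ has $t_H=1>0=t_N$ --- consistent with $[\nu_b]=(0,1)\neq(1,0)=\mu^\diamond$ in $\mathcal{N}(G)$, where the order is equality. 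The hypothesis $[\nu_b]\leq\mu^\diamond$ taken \emph{inside} $\mathcal{N}(G)$ must therefore enter through a $G$-internal construction: the proofs of \cite{DOR} and \cite{RV}, following Fontaine--Rapoport, produce a weakly admissible point of $\Fc(G,\mu)$ directly (reduction to a maximal torus through which $b$ factors and a root-theoretic choice of a representative of $\{\mu\}$), rather than by descent from $\GL(V)$.
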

As $B(G, \mu)\subset A(G,\mu)$ and we will be interested in the case $[b]\in B(G, \mu)$, our $\Fc(G,\mu,b)^{wa}$ will be non empty.

\subsection{Weak admissiblity and the curve}

The preceding definition of a weakly admissible point is Tannakian. 
We now give a geometric weak admissibility criterion in terms of the curve when $G$ is quasi-split. 
Let $C|\Fbar$ be algebraically closed complete and consider the curve attached to $C^\flat$ equipped with its point $\infty \in |X|$ with residue field $C$ and $$\BpdR:=\BpdR (C) = \widehat{\Ol}_{X,\infty}.$$ Consider the $C$-points of the $\BdR$-affine Grassmanian attached to $G$ (\cite{Sch}, \cite{FS},
 we only need its $C$-points, not the geometric diamond structure on it for what we do)
$$
\Gr^{\mathrm{B_{dR}}}_G (C):= G ( \BdR )/ G ( \BpdR).
$$
Since we only consider the $C$-points of $\Gr^{\mathrm{B_{dR}}}_G$, the reader who believes in Zorn's lemma can fix an isomorphism $C\llbracket t\rrbracket\xrightarrow{\sim} \BpdR$, that is to say a section of $\theta:\BpdR\rightarrow C$. After such a choice this is reduced to the $C$-points of the "classical" affine Grassmian. Recall nevertheless that there is a canonical section over $\Fbar$ and thus $\Fbar \subset \BpdR$ canonically, in particular we can define $\mu(t)\in G(\BpdR)$.
\\

Choose $b\in G(\Fb)$ that gives us the $G$-bundle $\E_b$. Its pullback via $\Spec (\BpdR)\rightarrow X$ is canonically trivialized. 
For each $x\in \Gr^{\mathrm{B_{dR}}}_G (C)$ one can construct a modification 
$$
\E_{b,x}
$$
of $\E_b$ \`a la Beauville-Laszlo (\cite{CS}  3.4.5, \cite{F2} 4.2, \cite{F} 3.20). This is given by gluing $\E_{b | X\setminus \{\infty\}}$ and the trivial $G$-bundle on $\Spec ( \BpdR)$ via the gluing datum given by $x$. 
\\

For $\mu\in X_*(\TT)^+$  the corresponding affine Schubert cell is
$$
\Gr^{\mathrm{B_{dR}}}_{G,\mu} (C) = G ( \BpdR)\mu (t)^{-1}  G ( \BpdR) / G(\BpdR) \subset \Gr^{\mathrm{B_{dR}}}_G (C).
$$
For $x\in \Gr^{\mathrm{B_{dR}}}_{G,\mu} (C)$ one has (see \cite{CS} lemma 3.5.5)
\begin{eqnarray*}\label{eq:formule permutation Hecke composantes}
c_1^G ( \E_{b,x} )& =& \mu^{\sharp} + c_1^G ( \E_b ) \\
&=&  \mu^{\sharp} - \kappa (b).
\end{eqnarray*}

\begin{remark}
In terms of the stack $\Bun_G$ (\cite{F}, \cite{FS})
the preceding formula gives the way the Hecke correspondence
\begin{tikzcd}[column sep=0.6cm]
\mathrm{Hecke}_\mu \ar[shift left=1.1mm, r] \ar[shift right=1.1mm, r] & \Bun_G
\end{tikzcd}
 permute the components of $\Bun_G= \coprod_{\alpha\in \pi_1 (G)_\Gamma} \Bun_G^{\alpha}$ where $\Bun_G^\alpha$ is the open/closed substack where $c_1^G=\a$. It says that 
$$
\mathrm{Hecke}_\mu ( \Bun_G^\a )
=\Bun_G^{\a + \mu^\sharp}.
$$
\end{remark}

Recall that for any $\mu$
 we have the {\it Bialynicki-Birula map } (\cite{CS} prop. 3.4.3 in general,  we don't need its diamond version but just its evaluation on $C$-points)
$$
\pi_{G,\mu}:\Gr^{\mathrm{B_{dR}}}_{G,\mu} (C) \longrightarrow \Fc  (G,\mu) (C).
$$
{\it When $\mu$ is minuscule this is an isomorphism} induced by applying  $\theta$ via
$$
G(\BpdR) \cap \mu(t)^{-1} G(\BpdR) \mu(t) = \{ g\in G(\BpdR)\ |\ \theta (g) \in P_\mu (C)\}, 
$$
a parahoric subgroup in $G(\BpdR)$.
Let us recall the following well-known lemma that is deduced from the properness of $G/P$ together with the fact that $X$ is a Dedekind scheme.

\begin{lemma}\label{P:redction modfication}
Let $\E$ and $\E'$ be two $G$-bundles on $X$ with a modification $\E|_{X\setminus\{\infty\}}\st{\sim}{\ra}\E'|_{X\setminus\{\infty\}}$. Then for any parabolic subgroup $P$ of $G$, we have a bijection
\[ \{\tr{Reductions of}\ \E \tr{ to }P\}\lra \{ \tr{Reductions of}\ \E'\tr{ to }P\}.\]
\end{lemma}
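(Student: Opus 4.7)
The plan is to reinterpret a reduction of a $G$-bundle $\E$ to $P$ as a section of the proper morphism $\E/P \to X$, and then invoke the valuative criterion of properness at the single point $\infty$.

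More precisely, I would first recall (or verify) that giving a reduction $\E_P$ of $\E$ to $P$ is equivalent to giving a section of the projection $\E \times^G (G/P) \longrightarrow X$: the section picks out the $P$-torsor $\E_P\subset \E$. Since $G/P$ is proper over $F$ (as $P$ is a parabolic), the bundle $\E\times^G (G/P)\to X$ is a locally trivial $G/P$-bundle, hence proper over $X$; the same holds for $\E'$.

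Next, the modification datum $\E|_{X\setminus\{\infty\}} \iso \E'|_{X\setminus\{\infty\}}$ induces an isomorphism of proper $X$-schemes
\[
\big(\E\times^G (G/P)\big)\big|_{X\setminus\{\infty\}} \iso \big(\E'\times^G (G/P)\big)\big|_{X\setminus\{\infty\}},
\]
which yields a bijection between sections of $\E/P$ and sections of $\E'/P$ over $X\setminus\{\infty\}$. The content of the lemma is thus that sections over $X$ are in bijection with sections over $X\setminus\{\infty\}$ for either of the two bundles.

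For this last point, since $X$ is a Dedekind scheme, $\Ol_{X,\infty}$ is a DVR, and by the valuative criterion of properness applied to the proper morphism $\E/P \to X$, any section over $\Spec \big(\mathrm{Frac}\,\Ol_{X,\infty}\big)$ extends uniquely to a section over $\Spec\,\Ol_{X,\infty}$. Gluing this local extension with the given section on $X\setminus\{\infty\}$ produces a unique section on all of $X$; this gives the inverse map. There is no real obstacle here: the only thing to be careful about is that the extension is truly unique (so the map is well defined and bijective, not merely surjective), which is ensured by separatedness of $\E/P\to X$.
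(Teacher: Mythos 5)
Your argument is exactly the one the paper has in mind: the lemma is stated there as being "deduced from the properness of $G/P$ together with the fact that $X$ is a Dedekind scheme," and your proposal simply fills in the details of that hint via sections of $\E\times^G(G/P)\to X$ and the valuative criterion at the DVR $\Ol_{X,\infty}$. The proof is correct and takes the same route as the paper.
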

We will need the following key definition. 
\begin{definition}
Let $b\in G(\Fb)$ be an element. For a Levi subgroup $M$  of $G$, a reduction of $b$ to $M$ is an element $b_M\in M(\Fb)$ together with an element $g\in G(\Fb)$, such that $b=gb_M\sigma(g)^{-1}$. Such a reduction $(b_M,g)$ is considered to be equivalent to $(h b_M h^{-\s}, gh^{-1})$ for any $h\in M(\Fb)$.  
We use the same notation and terminology for parabolic subgroups.
\end{definition}

The reductions of $b$ to $M$ and the reductions of $\E_b$ to $M$ are essentially the same.  However, for the reductions to parabolic subgroups, the situation is different. Any reduction $b_P$ of $b$ to a parabolic subgroup $P$ induces a reduction $\E_{b_P}$ of $\E_b$ to $P$. {\it But the converse is false and this is the main reason why in general  the weakly admissibility and the admissibility conditions differ.} Let us notice that if $M$ is a Levi subgroup of the parabolic subgroup $P$
(semi-simplicity of the category of isocrystals)
$$
B(M)\iso B(P).
$$
Thus, reductions of $b$ to $M$ or $P$ are essentially the same.
\\

 Suppose now that $\mu$ is minuscule, $b_M$ is a reduction of $b$ to $M$ with associated reduction $b_P$ to $P$. For $x\in \Fc  (G,\mu)(C)$ we deduce a reduction $(\E_{b,x})_{P}$ of $\E_{b,x}$ via lemma \ref{P:redction modfication}. This is  a modification of $\E_{b_P}$. Now, we have the decomposition in Schubert cells
of $\Fc  (G,\mu)(C)=G(C)/P_\mu (C)$ according to the $P$-orbits
$$
\Fc  (G,\mu) (C) = \coprod_{w\in W_P\backslash W / W_{P_\mu}} 
\Fc  (G,\mu) (C)^w
$$
where 
\begin{align*}
\Fc  (G,\mu) (C)^w
&=
P(C) \overset{.}{w} P_{\mu} (C) /P_{\mu}(C) \\
&= P(C)/ P(C)\cap P_{\mu^w} (C).
\end{align*}
Projection to the Levi quotient induces an affine fibration
$$
\text{pr}_w:
\Fc  (G,\mu) (C)^w  \longrightarrow \Fc  (M, \mu^w) (C).
$$

\begin{lemma}\label{lemma:reduction to P modification}
Suppose $\mu$ is minuscule. 
For $x\in \Fc  (G,\mu) (C)^w$ there is an isomorphism
$$
(\E_{b,x})_P \times_P M \simeq \E_{b_M,\mathrm{pr}_w (x)}.
$$
\end{lemma}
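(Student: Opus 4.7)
The plan is to identify both sides with modifications of $\E_{b_M}$ by a common element of $\Gr^{\BdR}_{M,\mu^w}(C)$, using the explicit form of $x$ in the Schubert cell and the compatibility of Beauville--Laszlo gluing with extension of structure group. Since both sides of the claimed isomorphism depend only on isomorphism classes of bundles, I may $\sigma$-conjugate and assume $b=b_M\in M(\breve F)\subset P(\breve F)$, so that the $P$-reduction $\E_{b_P}$ of $\E_b$ is literally $\E_{b_M}\times_M P$. Fix a trivialization of $\E_{b_M}|_{\Spec \BpdR}$ (possible since every torsor on $\Spec \BpdR$ is trivial), inducing trivializations of $\E_{b_P}|_{\Spec \BpdR}$ and $\E_b|_{\Spec \BpdR}$ via $M\hookrightarrow P\hookrightarrow G$.

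Next I would compute the $\BdR$-Grassmannian representative of $x$. Writing $x=p\dot w P_\mu\in P(C)\dot w P_\mu(C)/P_\mu(C)$ with $p\in P(C)$ and $\dot w\in N_G(T)(\overline F)\subset G(\BpdR)$ a representative, and choosing a lift $\tilde p\in P(\BpdR)$ of $p$ (using surjectivity of $P(\BpdR)\twoheadrightarrow P(C)$), the Bialynicki--Birula isomorphism $\Gr^{\BdR}_{G,\mu}(C)=G(\BpdR)/\theta^{-1}(P_\mu)\iso \Fc(G,\mu)(C)$ shows that $x$ lifts to $\tilde p\dot w\cdot\theta^{-1}(P_\mu)$; the associated element of $G(\BdR)/G(\BpdR)$ is
$$\tilde x = \tilde p\dot w\mu(t)^{-1}G(\BpdR)=\tilde p\,\mu^w(t)^{-1}G(\BpdR),$$
where we used $\dot w\mu(t)^{-1}\dot w^{-1}=\mu^w(t)^{-1}$ together with $\dot w\in G(\BpdR)$. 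The decisive point is that since $\mu^w\in X_*(T)\subset X_*(M)$ and $\tilde p\in P(\BpdR)$, the representative $\tilde p\,\mu^w(t)^{-1}$ already lies in $P(\BdR)$.

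Consequently, Beauville--Laszlo gluing of $\E_{b_P}|_{X\setminus\{\infty\}}$ with the trivial $P$-bundle on $\Spec\BpdR$ along $\tilde p\,\mu^w(t)^{-1}\in P(\BdR)$ yields a $P$-bundle whose extension of structure group to $G$ is $\E_{b,x}$; by the uniqueness in Lemma~\ref{P:redction modfication}, this $P$-bundle must equal $(\E_{b,x})_P$. Pushing out along $P\twoheadrightarrow M$ turns it into the Beauville--Laszlo gluing of $\E_{b_M}|_{X\setminus\{\infty\}}$ with the trivial $M$-bundle along $\tilde m\,\mu^w(t)^{-1}\in M(\BdR)$, where $\tilde m\in M(\BpdR)$ is the image of $\tilde p$ and lifts $m:=\pi_M(p)\in M(C)$. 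Applying Bialynicki--Birula for the (minuscule, as a cocharacter of $M$) $\mu^w$ identifies $\tilde m\,\mu^w(t)^{-1}\in\Gr^{\BdR}_{M,\mu^w}(C)$ with $m\cdot P_{M,\mu^w}\in M(C)/P_{M,\mu^w}(C)=\Fc(M,\mu^w)(C)$, which is precisely $\mathrm{pr}_w(x)$; hence $(\E_{b,x})_P\times_P M\simeq\E_{b_M,\mathrm{pr}_w(x)}$. The main technical delicacy is verifying that Beauville--Laszlo gluing commutes with extension and reduction of structure group along $M\subset P\subset G$, and that the $\BdR$-Grassmannian representative of $x$ lands in $P(\BdR)$ after the manipulation $\dot w\mu(t)^{-1}=\mu^w(t)^{-1}\dot w$; once these compatibilities are in hand, the rest is routine bookkeeping.
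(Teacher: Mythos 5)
Your proposal is correct and follows essentially the same route as the paper: both identify the $\BdR$-Grassmannian representative of $x$ with an element $a\,\mu^w(t)^{-1}\in P(\BdR)$ via the commutation $\dot w\mu(t)^{-1}=\mu^w(t)^{-1}\dot w$ and the Schubert-cell decomposition of $G(\BpdR)$, then glue the $P$-reduction and push to $M$. The paper packages the first step through the Iwasawa decomposition $G(\BdR)=P(\BdR)G(\BpdR)$ rather than lifting the flag-variety coordinates directly, but the computation is the same.
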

\begin{proof}
We use the Iwasawa decomposition 
$$
G(\BdR) = P (\BdR) G ( \BpdR).
$$
This induces an identification
$$
P(\BdR)/P(\BpdR) \iso G(\BdR)/G(\BpdR).
$$
Now if $y\mapsto x$ via this bijection then 
$$
(\E_{b,x})_P = \E_{b_P,y}.
$$
We use now that 
$$
G(\BpdR) = \coprod_{w\in W_P\backslash W/W_{P_\mu}} P(\BpdR) \overset{.}{w} P_\mu ( \BpdR)
$$
(this is deduced from the \'etaleness of the scheme $P_{F_\mu}\backslash G_{F_\mu}/ P_{\mu}$ where $F_\mu$ is the field of definition of $\mu$). Now consider 
$$
g \mu (t)^{-1} G (\BpdR) \in \Gr_{G,\mu}^{\BdR} (C)
$$
with $g\in G (\BpdR)$. Write $g= a \overset{.}{w} b$ with $a\in P(\BpdR)$ and $b\in P_\mu (\BpdR)$. We have 
$$
 g \mu(t)^{-1} = \underbrace{a  \mu^w (t)^{-1}}_{\in P(\BdR)}  \underbrace{\overset{.}{w}\mu(t) b \mu(t)^{-1}}_{\in G(\BpdR)}.
$$
The result is easily deduced.
\end{proof}

We can now state the main result of this section.

\begin{proposition}\label{P:wa}
Assume that $G$ is quasi-split and $[b]\in A(G,\mu)$ with $\mu$ minuscule. Then $x\in \Fc (G,\mu)(C)$  is weakly admissible if and only if for any standard parabolic $P$ with associated standard Levi $M$, any reduction $b_M$ of $b$ to $M$, and any $\chi\in X^\ast(P/Z_G)^+$, we have
 \[\deg\,        \chi_\ast (\E_{b, x})_P\leq 0,\] 
 where $(\E_{b,x})_P$  is the reduction to $P$ of $\E_{b, x}$ induced by the reduction $\E_{b_M}\times_{M}P$ of $\E_b$ and lemma \ref{P:redction modfication}.
\end{proposition}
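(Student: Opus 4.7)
The plan is to translate both sides of the equivalence into Tannakian numerical language and match them via a direct degree computation on the curve. The central identity I would aim at is
\[
\deg \chi_*(\E_{b,x})_P \;=\; t_H(V') - t_N(V')
\]
where $V' \subset V$ is a $P$-stable subspace of a representation of $G$ with $\det V' = \chi$. To prove it, apply Lemma \ref{lemma:reduction to P modification}: if $x \in \Fc(G,\mu)(C)^w$, then $(\E_{b,x})_P \times_P M \simeq \E_{b_M,\, \mathrm{pr}_w(x)}$, and since $\chi \in X^*(P/Z_G)$ factors through $M$, the modification formula for the first Chern class (applied inside $M$) gives $\deg \chi_*(\E_{b,x})_P = \langle \chi, \mu^w\rangle - v_F(\chi(b_M))$. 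On the other hand, since $\mu$ is minuscule the Bialynicki–Birula filtration on $V$ restricts on $V'$ to a filtration with cocharacter in the $W_M$-orbit of $\mu^w$, so $t_H(V') = \langle \det V', \mu^w\rangle$ and $t_N(V') = v_F(\det V'(b_M))$, matching the preceding formula.

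For the forward direction, assume $x$ is weakly admissible. Given $(P, b_M, \chi)$, I would pick a faithful representation $V$ of $G$ and, by standard highest-weight theory, produce a $P$-stable subspace $V' \subset V^{\otimes a}$ with $\det V'$ a positive multiple of $\chi$. The reduction of $b$ to $P$ turns $V'$ into a $\varphi$-stable sub-isocrystal, so Definition \ref{D:wa} gives $t_H(V') \leq t_N(V')$, and the key identity translates this to $\deg \chi_*(\E_{b,x})_P \leq 0$.

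For the backward direction, I would invoke the Tannakian reformulation of weak admissibility (in the spirit of \cite{DOR} Ch.~9): $(b,\mu_x)$ is weakly admissible if and only if for every parabolic $P$, every reduction $b_P$ of $b$ to $P$ (equivalently to the Levi $M$, since $B(P)=B(M)$), and every dominant $\chi \in X^*(P/Z_G)^+$, the inequality $t_H(\chi) \leq t_N(\chi)$ holds, together with the normalization $t_H=t_N$ on the ambient representation. The normalization is automatic from $[b] \in A(G,\mu)$: indeed, $\kappa(b) - \mu^\sharp$ then lies in the torsion subgroup of $\pi_1(G)_\Gamma$, which is killed by any character of $G$. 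The remaining inequalities are exactly our hypothesis, again by the key identity.

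The main obstacle is the Tannakian reformulation itself, namely matching the Rapoport–Zink definition of weak admissibility (via $\varphi$-stable subspaces of a faithful representation) with the parabolic-reduction / dominant-character picture. Concretely one must show (i) every $\varphi$-stable subspace of a representation yields a reduction of $b$ to a parabolic subgroup (via the stabilizer and the identification $B(P)=B(M)$ coming from semi-simplicity of isocrystals), and (ii) every dominant $\chi \in X^*(P/Z_G)^+$ is realized, up to a positive scalar, as $\det V'$ for some $V'$. Both steps are standard but require care for quasi-split (non-split) groups to ensure Galois compatibility, and this is where the bulk of the technical work lies.
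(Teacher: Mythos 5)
Your proposal is correct and follows essentially the same route as the paper: the same key degree identity $\deg \chi_*(\E_{b,x})_P = \deg(\chi(b_M),\chi\circ\mu^w)$ obtained from Lemma \ref{lemma:reduction to P modification} and the first Chern class formula, followed by the equivalence between the subobject definition of weak admissibility and the parabolic-reduction/dominant-character criterion, which the paper simply cites as \cite{DOR} coro.~9.2.30 (together with Lemma \ref{lemma:reduction to the case of maximal parabolic}) and which you correctly identify as the technical heart.
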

\begin{proof}
We use lemma \ref{lemma:reduction to P modification}. The character $\chi$ factorizes through the Levi quotient $M$ and thus 
$$
\chi_* (\E_{b,x})_P = \chi_* \E_{b_M,\mathrm{pr}_w (x)}
$$ 
if $x$ is in the Schubert cell associated to $w$. Now, 
$$
c_1^G (\E_{b_M, \mathrm{pr}_w (x)}) = (\mu^{w})^\sharp - \kappa_M (b_M) \in \pi_1 (M)_\Gamma.
$$
In particular, for any $\chi\in X^*(M)$, 
$$
\deg \, \chi_*\E_{b_M, \mathrm{pr}_w (x)}  = \deg (\chi (b_M), \chi\circ \mu^w)
$$
where the degree of the right member has to be taken in the sense of filtered $\ph$-modules of rank $1$.
The result is then a consequence of the corrolary 9.2.30 of \cite{DOR} (together with lemma \ref{lemma:reduction to the case of maximal parabolic}).
\end{proof}


\section{The admissible locus}
\label{sec:ad locus}

As before, we fix the triple $(G, \{\mu\}, [b])$. We will assume that $\{\mu\}$ is \textsl{minuscule} until clearly stated otherwise and quickly assume $[b]\in B(G,\mu)$ (then the triple $(G, \{\mu\}, [b])$ is called a local Shimura datum, cf. \cite{RV} sec. 5).
\\

Let $K|\breve{E}$ be a finite extension and $x\in \Fc(G,\mu, b)^{wa}(K)$. We get the following  diagram of functors
\[\xymatrix@C=12mm{ \Rep\, G \ar[r]^-{\I_{b,x}}\ar[rd]_{\omega_G} & \varphi\mathrm{-}\Fil\Mod_{K/\Fb}^{wa}\ar[r]^-{V_{cris}}& \Rep_{cris}\Gal(\ov{K}/K)\ar[ld]^{\omega_{cris}}\\
	& \Vect_{F}&
	}\]
	where $\varphi\mathrm{-}\Fil\Mod_{K/\Fb}^{wa}$ is the category of weakly admissible filtered isocrystals, \\ $\Rep_{cris}(\Gal(\ov{K}/K))$ is the category of crystalline representations of $\Gal(\ov{K}/K)$ with coefficients in $F$ whose Sen operator is $F$-linear,   $\omega_G$ and $\omega_{cris}$ are the natural fiber functors. This diagram is commutative
since the class of the torsor $\underline{\mathrm{Isom}}^\otimes (\omega_G,\omega_{cris})$  is
given by $\kappa (b)-\mu^{\sharp}$ and is thus trivial (\cite{RV} prop. 2.7, see also 1.20 of \cite{RZ} in case $G_{der}$ is simply connected, and \cite{W} in the general case). Thus, the choice of an isomorphism between $\omega_G$ and $\omega_{cris}$ induces the homomorphism
\[\rho_x: \Gal(\ov{K}/K)\lra G(F).\]

Rapoport and Zink conjectured the existence of an open subspace \[\Fc (G,\mu, b)^{a}\subset  \Fc (G,\mu, b)^{wa}\] together with an \'etale $G$-local system $\mathscr{E}$ on it
such that for all finite extension $K|\breve{E}$ we have $\Fc(G,\mu, b)^{a}(K)=\Fc(G,\mu, b)^{wa}(K)$, and  $\mathscr{E}$ interpolates the preceding Galois representation $\rho_x: \Gal(\ov{K}/K)\ra G(F)$, cf. \cite{DOR} conj. 11.4.4 and \cite{Har} conj. 2.3. The associated spaces of lattices will give the desired tower of local Shimura varieties, (\cite{Ra} hope 4.2, \cite{RV} conj. 5.1, \cite{Sh} sec. 3.1).
\\

Now we will use the theory of $G$-bundles on the Fargues-Fontaine curve to define the admissible locus \[\Fc(G,\mu, b)^{a}\subset\Fc(G,\mu, b)^{wa}.\] 
 Let $C|\breve{E}$ be any complete algebraically closed extension containing $\Fbar$. We consider the curve $X=X_{C^\flat}$ over $F$ with the canonical point $\infty\in X$. 
\begin{definition}[See also \cite{Ra2} def. A.6]\label{D:admissible qs} 
\

\begin{enumerate}
\item
We set 
\[\Fc(G,\mu, b)^{a}(C)=\{x\in \Fc(G,\mu)(C)\;|\; \nu_{\E_{b,x}}\text{ is trivial} \}.\]
In other words, if $G$ becomes quasi-split over $F'|F$ we ask that $\E_{b,x}\otimes_F F'$ is semi-stable on the curve $X_{F'}$ and for all $\chi:G\rightarrow \G_m$, $\deg\, \chi_* \E_{b,x}=0$.
\item We define $\Fc (G,\mu,b)^{a}$ as the subset of $\Fc (G,\mu)$ stable under generalization whose $C$-points are given by the preceding for any $C$ as before.
\end{enumerate}
\end{definition}
Since $$c_1^G (\E_{b,x})= \mu^{\sharp}- \kappa (b)$$ the isomorphism class of $\E_{b,x}$ does not depend on $x\in \Fc (G,\mu,b)^a (C)$. In particular, if $[b]\in B(G,\mu)$ one has 
$$
\Fc (G,\mu,b)^a (C) = \{ x\in \Fc (G,\mu) (C)\ |\ \E_{b,x}\text{ is trivial} \}.
$$
%
Let us recall some basic properties of $\Fc(G,\mu, b)^{a}$.
\begin{proposition}\label{prop: lieu admissible}
The admissible locus satisfies the following properties:
\begin{enumerate}
	\item $\Fc(G,\mu, b)^{a}$ is a partially proper open subset of $\Fc(G,\mu)$.
	\item We have the inclusion \[\Fc(G,\mu, b)^{a}\subset\Fc(G,\mu, b)^{wa},\] such that for any finite extension $K|\breve{E}$
	  \[\Fc(G,\mu, b)^{a}(K)=\Fc(G,\mu, b)^{wa}(K).\] In particular, $\Fc(G,\mu, b)^{a}\neq \emptyset$ if and only if $[b]\in A(G, \mu)$, cf. prop. \ref{P:wa non empty}.
	\item When $(G, \{\mu\}, [b])$ is a Hodge type local Shimura datum (see \cite{Sh} 3.2), then the subspace  $\Fc(G,\mu, b)^{a}$  coincides with those introduced by Hartl \cite{Har} (via the Robba ring $\wt{B}^\dagger_{rig}(C)$) and Faltings \cite{Fal} (via the crystalline period ring $B_{cris}(C)$).
\end{enumerate}
\end{proposition}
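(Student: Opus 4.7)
The plan is to treat the three properties in turn, using the dictionary between $G$-bundles on $X$ and $\sigma$-conjugacy classes in $B(G)$, together with the characterization of the weakly admissible locus given in Proposition \ref{P:wa}.

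For (1), I would invoke upper semi-continuity of the Newton polygon of a $G$-bundle in families, i.e.\ an adaptation of the Kedlaya--Liu/Fargues semi-continuity theorem to the family of $G$-bundles $\E_{b,x}$ on the relative Fargues-Fontaine curve over $\Fc(G,\mu)$. Since the possible values of $\nu_{\E_{b,x}}$ are constrained by $\mu^\diamond$ and thus live in a finite poset inside $\mathcal{N}(G)$, the complement of the admissible locus is a finite union of closed Newton strata, so $\Fc(G,\mu,b)^a$ is open. For partial properness, which here amounts to stability under generalization of this open subset, the same semi-continuity shows $\nu_{\E_{b,x'}} \leq \nu_{\E_{b,x}}$ when $x$ is a specialization of $x'$, so if $\nu_{\E_{b,x}}$ vanishes then so does $\nu_{\E_{b,x'}}$.

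For (2), first reduce to the quasi-split case by passing to an inner twist, which preserves both the Kottwitz set and the Newton chamber. In the quasi-split case, Proposition \ref{P:wa} characterizes weak admissibility by the inequality $\deg\, \chi_\ast (\E_{b,x})_P \leq 0$ for $\chi\in X^\ast(P/Z_G)^+$, but only for reductions $(\E_{b,x})_P$ induced by reductions $b_M$ of $b$ to a standard Levi $M$. Admissibility asks semistability of $\E_{b,x}$, which by Definition \ref{D:semistable} imposes the same inequality for \emph{all} reductions to all standard parabolics; this is strictly stronger, hence $\Fc(G,\mu,b)^a \subset \Fc(G,\mu,b)^{wa}$. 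For equality when $K|\breve{E}$ is finite, the key input is Colmez-Fontaine (reproved via the curve in \cite{FF}, chap.~10): a weakly admissible filtered isocrystal is admissible. Applied via a faithful representation of $G$, this produces a trivialization of $\E_{b,x}$ over the curve attached to some finite extension of $K$, so $\E_{b,x}$ is semistable with trivial first Chern class (recall $c_1^G(\E_{b,x})=\mu^\sharp-\kappa(b)=0$ when $[b]\in B(G,\mu)$), whence $x\in \Fc(G,\mu,b)^a(K)$. Non-emptiness then follows from Proposition \ref{P:wa non empty} together with the inclusion $B(G,\mu)\subset A(G,\mu)$.

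For (3), I would argue by Tannakian reconstruction. A Hodge-type local Shimura datum comes equipped with a faithful representation $(V,\rho)$ of $G$; the definitions of Hartl (via $\wt{B}^\dagger_{rig}(C)$) and Faltings (via $B_{cris}(C)$) characterize admissibility of $x$ by the triviality of the $\varphi$-module obtained by modifying $\rho_\ast \E_b$ at $\infty$ using $\rho\circ \mu_x$, while under the equivalence between $\varphi$-modules/period rings and vector bundles on $X$, this triviality is precisely semistability with vanishing slope of $\rho_\ast \E_{b,x}$. By Tannakian faithfulness this is equivalent to semistability of $\E_{b,x}$ itself, matching Definition \ref{D:admissible qs}.

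The principal obstacle is the equality on finite extensions in (2): it rests on the deep Colmez-Fontaine theorem, and the remaining work is the translation between filtered isocrystal language and the $G$-bundle language on the curve. Part (1) is essentially formal from Newton polygon semi-continuity in families, and part (3) is an unwinding of the constructions of Hartl and Faltings via the equivalence between $\varphi$-modules and bundles on $X$.
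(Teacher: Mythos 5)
Your proposal follows essentially the same route as the paper, which proves (1) via Kedlaya--Liu's openness of the fibrewise semi-stable locus for the family $\E_{b,x}$ over the relative curve (pulled back through the $\mathrm{B_{dR}}$-affine Grassmannian) and simply refers to Rapoport's appendix (\cite{Ra2}, Remarks A.5) for (2) and (3); your fleshing-out of (2) via Proposition \ref{P:wa} together with Colmez--Fontaine, and of (3) via the $\varphi$-module/vector-bundle dictionary, is exactly the content of those references. One small slip: in (1) your semi-continuity argument ($\nu_{\E_{b,x'}}\leq \nu_{\E_{b,x}}$ for $x$ a specialization of $x'$) only re-establishes stability under generalization, which is automatic for an open subset; partial properness instead follows because the defining condition depends only on the maximal rank-one generalization of a point (it is tested on $C$-points), so $\Fc(G,\mu,b)^a$ is overconvergent once it is known to be open.
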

\begin{proof}
(1) follows from the work of Kedlaya-Liu \cite{KL}. For (2) and (3) see \cite{Ra2} Remarks A.5.
\end{proof}

For the more advanced reader here is how to prove point (1) in the preceding. 
One can  consider Scholze's $\BdR$ affine grassmanian $\mathrm{Gr}^{\BdR}_G$ over $\Spa (\breve{E})^\diamond$. This is the \'etale sheaf associated to the presheaf $(R,R^+)\mapsto G ( \BdR (R))/ G(\BpdR (R))$ on affinoid  perfectoid $\breve{E}$-algebras. There is a  Bialynicki-Birula morphism that is an isomorphism thanks to our minuscule hypothesis (\cite{CS}). Its inverse gives an embedding 
$$
\Fc (G,\mu)^{\diamond} \hookrightarrow \mathrm{Gr}^{\BdR}_G.
$$
Now for any $\Fb$-perfectoid space $S$ together with an element $x\in \mathrm{Gr}^{\BdR}_{G} (S)$ one can define $$\E_{b,x},$$ a $G$-bundle on the relative adic Fargues-Fontaine curve $X_{S^\flat}$. This is defined using the "degree one Cartier divisor" 
$$
S\hookrightarrow X_{S^\flat}
$$
and a gluing "\`a la Beauville-Laszlo" by modifying $\E_b$ on $X_S$. In fact, when $S=\Spa (R,R^+)$, ``the formal completion along this Cartier divisor" is $\mathrm{Spf} ( \BpdR (R))$. According to Kedlaya and Liu, 
$$
S^{ss}:=\{s\in S\ |\ \E_{b,x |X_{k(s)^\flat}} \text{ is semi-stable}\}
$$
is open in $S$. Now if $x$ is given by a morphism $S\rightarrow \Fc (G,\mu)$ that is quasicompact quasiseparated surjective, then $|S|\rightarrow |\Fc (G,\mu) |$ is a quotient map such that $S^{ss}$ is the pullback of $\Fc(G,\mu,b)^a$ and one concludes.
\\

Let us now state the following result since this is not written anywhere explicitly.
It says that one can construct the local Shimura varieties as covers (in the sense of de Jong \cite{DeJong}) of $\Fc (G,\mu,b)^a$ as conjectured by Rapoport and Zink.
 We suppose $\mu$ minuscule as before.

\begin{theorem}[Scholze]
Suppose $[b]\in B(G,\mu)$. Then one can construct a pro-\'etale $\underline{G(F)}$-local system on $\Fc (G,\mu,b)^{a}$ such that for any compact open subgroup $K\subset G(F)$, the moduli of its $K$-trivializations $\M_K (G,\mu,b)\drt \Fc (G,\mu,b)^a$ is represented by a rigid analytic space. 
\end{theorem}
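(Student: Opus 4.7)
The plan is to carry out the construction inside Scholze's framework of diamonds and the relative Fargues--Fontaine curve. First I would produce the universal geometric object: the inverse of the Bialynicki--Birula isomorphism $\mathrm{Gr}^{\BdR}_{G,\mu} \iso \Fc(G,\mu)^\diamond$ (which is an isomorphism precisely because $\mu$ is minuscule, as recalled before proposition \ref{prop: lieu admissible}) supplies a universal point of the $\BdR$-affine Grassmannian, and from this a universal $G$-bundle $\mathscr{E} = \E_{b,x^{\mathrm{univ}}}$ on the relative adic Fargues--Fontaine curve $X_{S^\flat}$ over $S = \Fc(G,\mu)^\diamond$, obtained by Beauville--Laszlo gluing of $\E_b$ along the universal relative Cartier divisor. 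Restricting to $\Fc(G,\mu,b)^{a}$, and using the hypothesis $[b] \in B(G,\mu)$ so that $c_1^G(\E_{b,x}) = \mu^{\sharp} - \kappa(b) = 0$, the bundle $\mathscr{E}$ is pointwise trivial: at every geometric point it is isomorphic to the trivial $G$-bundle.

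Next I would define $\M_\infty(G,\mu,b)$ as the $v$-sheaf $\underline{\Isom}(\mathscr{E}_{\mathrm{triv}},\mathscr{E})$ of global trivializations of $\mathscr{E}$ on the relative curve. Since for any complete algebraically closed $C|\breve{E}$ one has $H^0(X_{C^\flat},\Ol_{X}) = F$, the automorphism group of the trivial $G$-bundle on the absolute curve is the constant group $G(F)$; hence $\M_\infty$ is a pseudo-$\underline{G(F)}$-torsor over $\Fc(G,\mu,b)^{a}$, trivializable $v$-locally by the pointwise triviality just established. For a compact open subgroup $K \subset G(F)$, I would then set
\[
\M_K(G,\mu,b) := \M_\infty(G,\mu,b) / \underline{K},
\]
the $v$-sheaf of $K$-level trivializations of $\mathscr{E}$, equipped with its natural map to $\Fc(G,\mu,b)^{a}$.

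The hard part is representability of $\M_K$ by a rigid analytic space. The plan is to show first that $\M_K \to \Fc(G,\mu,b)^{a}$ is étale as a morphism of diamonds: any automorphism of $\mathscr{E}$ over an infinitesimal neighborhood that is trivial modulo $K$ is trivial, because $G(F)$ is totally disconnected and $K$ is open, so the fibers are étale and the map is unramified; smoothness (in fact étaleness) then follows from the universal deformation theory of $G$-bundles on the relative curve and the discreteness of $K \backslash G(F)$. Having produced an étale morphism of diamonds with target the diamond of a rigid analytic space, one appeals to Scholze's equivalence between the étale site of a quasi-separated rigid analytic space over $\breve{E}$ and the étale site of its associated diamond to descend $\M_K$ to an honest rigid analytic étale cover; quasi-separatedness of $\M_K$ reduces to openness of $K$-orbits in $G(F)$. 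The principal obstacle in this programme is the étaleness of $\M_K$: it requires a careful verification that trivializations of the universal $G$-bundle spread uniquely in infinitesimal families on $X_{S^\flat}$, a point that rests on the geometry of the relative curve and the profinite nature of the group acting.
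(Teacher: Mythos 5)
Your architecture matches the paper's: invert the Bialynicki--Birula isomorphism to get a universal modification $\mathscr{E}=\E_{b,x}$ on the relative curve, set $\M_\infty=\underline{\Isom}(\E_1,\mathscr{E})$, put $\M_K=\M_\infty/\underline{K}$, and descend. But the load-bearing step is exactly the one you pass over in a clause. You write that $\M_\infty$ is a pseudo-$\underline{G(F)}$-torsor ``trivializable $v$-locally by the pointwise triviality just established.'' Pointwise (geometric-fiberwise) triviality of a family of $G$-bundles on $X_{S^\flat}$ does \emph{not} formally yield trivializations over a neighborhood of a point of $S$ in any topology; that implication is the whole content of the theorem the paper invokes, namely Kedlaya--Liu's result that rank $n$ vector bundles on $X_{S^\flat}$ which are geometrically fiberwise semi-stable of slope zero are equivalent to rank $n$ pro-\'etale $\underline{F}$-local systems on $S^\flat$. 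It is this equivalence (promoted to $G$-bundles, as in Fargues--Scholze) that upgrades $\underline{\Isom}(\E_1,\mathscr{E})$ from a pseudo-torsor to an honest pro-\'etale $\underline{G(F)}$-torsor. Without it, nothing downstream of this point is justified.

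Conversely, the step you single out as ``the hard part'' --- \'etaleness of $\M_K\to\Fc(G,\mu,b)^a$ --- is formal once the torsor property is established in the pro-\'etale topology, and your proposed route through ``universal deformation theory of $G$-bundles'' and automorphisms ``over an infinitesimal neighborhood'' is not available in this setting (there are no infinitesimal thickenings of perfectoid spaces to deform along). The correct argument is: pro-\'etale-locally on the base, $\M_\infty\simeq \underline{G(F)}\times(\text{base})$, hence $\M_K=\M_\infty/\underline{K}\simeq \underline{G(F)/K}\times(\text{base})$ is a disjoint union of copies of the base, so $\M_K\to\Fc(G,\mu,b)^a$ is separated and \'etale pro-\'etale-locally; one then concludes by Scholze's theorem on pro-\'etale descent of separated \'etale morphisms, which also handles representability by a rigid analytic space since the target is one. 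So the labor in your write-up is misallocated: the substance sits in the local trivializability you asserted, not in the descent you worried about.
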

\begin{proof}
Let $S\rightarrow \Fc (G,\mu,b)^a$ be as before. Kedlaya and Liu prove in \cite{KL} that rank $n$ vector bundles on $X_{S^\flat}$ that are geometrically fiberwise on $S^\flat$ semi-stable of slope zero are the same as rank $n$ pro-\'etale $\underline{F}$-local systems on $S^\flat$.
Using this one can prove that the preceding $G$-bundle $\E_{b,x}$ on $X_{S^\flat}$ gives rise to a pro-\'etale $\underline{G(F)}$-torsor on $S^\flat$
$$
\underline{\mathrm{Isom}} (\E_1, \E_{b,x})
$$
where $\underline{G(F)}= \underline{\mathrm{Aut}} (\E_1)$ (we refer to \cite{FS}).
By varying the test morphisms $S\drt \Fc (G,\mu,b)^a$ this defines a pro-\'etale $\underline{G(F)}$-torsor $\M_\infty\rightarrow \Fc (G,\mu,b)^{a,\diamond}$. 
 Now one uses 
\cite{Sch1} (pro-\'etale descent of  separated \'etale morphisms) that says that $\M_\infty /K$ is representated by a rigid analytic space separated and \'etale over $\Fc (G,\mu,b)^a$.
\end{proof}

\section{Hodge-Newton decomposability}
\label{sec:HN decomposability}

In \cite{HN} (Theorem 1.1 \& Lemma 2.5) He and Nie give a description of Kottwitz set $B(G,\mu)$ as a subset of a positive Weyl chamber via the Newton map. This is applied in \cite{GoHeNi} to give a criterion for HN decomposability in terms of the Hodge polygon $\mu^\diamond$.  
He's description relies on his description of $B(G)$ via an affine root system derived from Bruhat-Tits theory applied to $G_{\Fb}$.  In this section we give a description of $B(G,\mu)$ that only relies 
on the usual relative and absolute spherical  root systems associated to $G$. He and Nie  description of $B(G)$ 
is well adapted to problems concerning integral models of Rapoport-Zink spaces and their mod $p$ special fiber. The problem we are interested in concerns only  the generic fiber and there is no reason to use Bruhat-Tits theory and integral structure on Dieudonn\'e modules for this. We will use an approach developed by Chai in \cite{Ch} and make the link with He's work.
 The two really original results of this section are proposition \ref{prop:minute pour B G 0 nub mu moins 1} and corollary \ref{coro:fully HN dec ssi idem} that will be a key point in the proof of our main theorem.

\subsection{A description of a generalized Kottwitz set  in the quasi-split case}
\label{sec:description a la He}

In this  subsection $G=H$ is a quasi-split group over $F$. 
Since $C_G (\AA)=\TT$ the restriction of any root of $\TT$ to $\AA$ is a root and 
this induces a bijection (\cite{Springer}  15.5.3)
$$
\Phi /\Gamma \xrightarrow{\ \sim\ } \Phi_0.
$$
One verifies moreover that 
$$
\Delta /\Gamma \iso \Delta_0.
$$
Let us fix $\mu \in X_*(\TT)^+$ {\it not necessarily minuscule}. As before, we note $\mu^\diamond\in X_*(\AA)_\Q^+$ the Galois average of $\mu$ and $[\nu_b]\in X_*( \AA)_\Q^+=\mathcal{N}(G)$ for $[b]\in B(G)$.
\\

 For $\alpha\in \Delta_0$ and $\beta\in \Phi $ such that $\beta_{|\AA}=\alpha$, one has $\beta\in \Delta$.
We note $$\omega_\beta \in \langle \Phi \rangle _\Q$$ the corresponding fundamental weight, that is to say for $\gamma \in \Delta$, 
$$
\langle  \gc, \omega_{\beta} \rangle  = \delta_{\gamma,\beta}.
$$
Now, for $\a\in \Delta_0$, we set 
$$
\tilde{\omega}_\alpha = \sum_{\substack{ \beta\in \Phi \\ \beta_{|A}=\alpha }} \omega_\beta \in X^* (\TT)_{\Q}^{\Gamma} = X^* (\AA)_\Q.
$$
This satisfies the following  property: for $\gamma \in \Delta $ we have 
$$
\langle  (\gc)^\diamond, \tilde{\omega}_\alpha \rangle  = \begin{cases}
 0, \quad\text{ if } \gamma_{|A}\neq \alpha \\
 1, \quad \text{ otherwise}
\end{cases}
$$
where $(\gamma^{\vee})^{\diamond} \in \langle\Phic_0\rangle_\Q$ is the Galois average of $\gamma^{\vee}$.
\\

We will need the following generalized Kottwitz set later.

\begin{definition}
For $\e\in \pi_1 (G)_\Gamma$ and $\delta\in X_* (\AA)_\Q^+$ we set 
$$
B(G,\e,\delta   ) = \{ [b]\in B(G)\ |\ \kappa (b)=\e \text{ and } [\nu_b]\leq \delta\}.
$$
\end{definition}

Of course if this set is non empty then $\e \equiv \delta$ in $\pi_1 (G)_\Gamma\otimes \Q$. One has 
$$
B(G,\mu) = B ( G, \mu^\sharp, \mu^\diamond).
$$

\begin{proposition}\label{prop:calcul general B ( G , epsilon, delta)}
If $\e=\mu^\sharp$ then, as a subset of $\mathcal{N}(G)$, $B(G,\e,\delta)$ is the set of $v\in X_*(\AA)_\Q^+$ satisfying
\begin{itemize}
\item $\delta- v\in \langle \Phi_0^\vee \rangle_\Q$,
\item $\forall \a\in \Delta_0 \text{ s.t. } \langle v,\alpha\rangle \neq 0, \langle \delta - v   ,\tilde{\omega}_\a\rangle  \geq 0  \text{ and } \langle \mu^\diamond -v, \tilde{\omega}_\a \rangle \in \Z.$
\end{itemize}
\end{proposition}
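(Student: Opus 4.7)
The plan is to use Kottwitz's injection $(\nu,\kappa)\colon B(G) \hookrightarrow \mathcal{N}(G)\times \pi_1(G)_\Gamma$ together with the explicit description of its image due to Chai \cite{Ch}. After fixing $\kappa(b)=\varepsilon=\mu^\sharp$, the set $B(G,\varepsilon,\delta)$ embeds into $X_*(\AA)_\Q^+$ via $[b]\mapsto v=[\nu_b]$, and the task is to translate the three defining constraints---$v$ dominant, $v\leq \delta$, and the existence of some $[b]\in B(G)$ realizing the prescribed invariants---into the two conditions listed in the proposition. By Chai, the last constraint amounts to: (a) $v$ and $\mu^\sharp$ have the same image in $\pi_1(G)_\Gamma\otimes \Q$, and (b) an integrality condition---letting $M_v= C_H(v)$, the element $v\in X_*(Z(M_v))_\Q$ must lift to $\pi_1(M_v)_\Gamma$ with image $\mu^\sharp$ in $\pi_1(G)_\Gamma$.

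Under the identification $\pi_1(G)_\Gamma \otimes \Q \simeq X_*(\AA)_\Q / \langle \Phi_0^\vee\rangle_\Q$, condition (a) combined with the analogous property for $\delta$ yields $\delta - v \in \langle \Phi_0^\vee\rangle_\Q$, which is the first condition of the proposition. Writing $\delta - v = \sum_{\alpha\in\Delta_0} c_\alpha \alpha^\vee$ with $c_\alpha \in \Q$, the formula $\langle(\gamma^\vee)^\diamond, \tilde{\omega}_\alpha\rangle = \delta_{\gamma_{|\AA},\alpha}$ given in the text, combined with the standard expression of the relative coroot $\alpha^\vee$ as a positively scaled Galois average of absolute coroots, shows that $\langle \beta^\vee, \tilde{\omega}_\alpha\rangle$ is a positive multiple of $\delta_{\alpha,\beta}$. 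Hence $\langle\delta-v,\tilde{\omega}_\alpha\rangle\geq 0$ is equivalent to $c_\alpha\geq 0$, matching the Newton inequality $v\leq \delta$ for $\alpha\notin \Delta_{0,M_v}$. For $\alpha\in \Delta_{0,M_v}$, dominance of $\delta$ together with $\langle v,\alpha\rangle=0$ yields $\langle\delta-v,\alpha\rangle \geq 0$, and by positive definiteness of the Cartan matrix of the sub-root system $\Delta_{0,M_v}$, one propagates positivity from the $c_\beta$ with $\beta\notin\Delta_{0,M_v}$ to obtain $c_\alpha\geq 0$ for all $\alpha\in\Delta_{0,M_v}$, giving the full Newton inequality $v \leq \delta$.

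The integrality assertion $\langle\mu^\diamond - v, \tilde{\omega}_\alpha\rangle \in \Z$ for $\alpha\notin \Delta_{0,M_v}$ encodes Chai's condition (b): the $\tilde{\omega}_\alpha$ for such $\alpha$ provide, after controlled positive rescalings, the dual basis to a natural set of generators of $\pi_1(M_v)_\Gamma$ modulo torsion (and modulo the part already coming from $\pi_1(G)_\Gamma$), and integrality of the pairings with $\mu^\diamond - v$ is precisely the condition that the difference $\mu^\sharp - v^{M_v,\sharp}$ vanishes in the appropriate quotient, i.e.\ that $v$ lifts to $\pi_1(M_v)_\Gamma$ with image $\mu^\sharp$ in $\pi_1(G)_\Gamma$. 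Conversely, given the two conditions of the proposition, the same analysis yields $[b] \in B(G, \varepsilon, \delta)$ with $[\nu_b] = v$ by taking a basic element of $M_v$ with the prescribed Kottwitz invariant. The main obstacle is this bookkeeping for the integrality part: identifying the lattice inside $X_*(Z(M_v))_\Q$ that comes from $\pi_1(M_v)_\Gamma$ with specified image in $\pi_1(G)_\Gamma$, and matching it with the integrality of pairings against the $\tilde{\omega}_\alpha$. The propagation-of-positivity step for $\alpha\in\Delta_{0,M_v}$ is also delicate but is handled by a standard positive-definiteness argument on the Cartan matrix of the sub-root system.
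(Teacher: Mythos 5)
Your proposal is correct and follows essentially the same route as the paper: both use Chai's decomposition of $B(G)$ into the images of $B(M)_{\mathrm{basic}}\simeq\pi_1(M)_\Gamma$ over standard Levi subgroups $M$, identify the slope of a basic class of $M$ with its image in $\pi_1(M)_\Gamma\otimes\Q=X_*(Z_M)_\Q^\Gamma$, and translate the two Kottwitz/Newton constraints into pairings against the $\tilde\omega_\a$. The integrality bookkeeping you flag as the main obstacle is exactly what the paper executes by writing $\mu-\mu'=\sum_{\gamma\in\Delta}\lambda_\gamma\gamma^\vee+z$ with $z\in I_\Gamma X_*(\TT)$ and computing $\langle\mu^\diamond-v,\tilde\omega_\a\rangle=\sum_{\gamma|_{\AA}=\a}\lambda_\gamma$ (your dual-basis observation, which is correct), while your positivity-propagation step for $\a\in\Delta_{0,M_v}$ supplies the detail behind the paper's ``one checks'' in the converse direction.
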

\begin{proof}
One has 
$$
B(G)= \bigcup_{M} \mathrm{Im} \big ( B(M)_{basic} \rightarrow B(G) \big )
$$
where $M$ goes through the standard Levi subgroups ($G$ included). Moreover 
$$
\kappa_M: 
B(M)_{basic} \xrightarrow{ \ \sim\ } \pi_1 (M)_\Gamma .
$$
Via this isomorphism 
the slope morphism associated to an element of $B(M)_{basic}$ is given by $$\pi_1 (M)_\Gamma \otimes \Q = X_*(Z_M)_\Q ^\Gamma.$$
For $v \in B(G,\e,\delta)\subset X_*(\AA)_\Q^+$, by definition  $\delta -v\in \Q_{\geq 0} (\Phi^{\vee}_0)^+$. Now for $\alpha \in \Delta_0$ and $z\in \Q_{\geq 0} (\Phi^{\vee})^{+}$ 
$$
\langle z , \tilde{\omega}_\alpha \rangle \, \geq 0
$$
since
 $$\Q_{\geq 0} (\Phi^\vee_0)^+ = \Q_{\geq 0} \{ (\gc)^\diamond \ |\ \gamma \in \Phi^+ \}.
$$
One deduces that $\langle \delta-v, \tilde{\omega}_\alpha \rangle \,\geq 0$. Now consider an element of $B(M)_{basic}$ given by the class of some $\mu'\in X_*(\TT)$ in $\big [ X_*(\TT)/\langle \Phic_M\rangle \big ]_{\Gamma}$. One has a decomposition 
$$
X_*(\TT)_\Q = \langle \Phic_M\rangle _\Q \oplus \langle \Phi_M\rangle _\Q^\perp 
$$
that gives rise to a projection morphism
$$
\mathrm{pr}_M: X_*(\TT)_\Q \longrightarrow \langle \Phi_M\rangle _\Q^\perp .
$$
Then the slope morphism in $X_*(\AA)_\Q$ associated to our element of $B(M)_{basic}$ is given by
$$
v= \mathrm{pr}_M (\mu')^\diamond
$$
the Galois average of $\mathrm{pr}_M(\mu')$. 
We can suppose that $M$ is the centralizer of $v$. Then $v$ defines a parabolic subgroup $P_v$ with Levi subgroup $M$. One can find $w\in W^\Gamma$ such that $P_v  ^w$ is a standard parabolic subgroup and $M^w$ a standard Levi. Up to replacing $M$ by $M^w$ and $\mu'$ by $\mu'^w$ we can thus suppose that $v\in X_*(\AA)_\Q^+$. Suppose now that the image of our element in $B(M)_{basic}$ in $B(G)$ lies in $B(G,\e, \delta)$. 
 Then $\mu$ and $\mu'$ have the same image in $\pi_1(G)_\Gamma$, 
$$
\mu - \mu' \in  \langle \Phic  \rangle  + I_\Gamma.X_*(\TT)
$$
where $I_\Gamma\subset \Z [\Gamma]$ is the augmentation ideal. Let $\alpha\in \Delta_0$ be such that $\langle v,\alpha\rangle \neq 0$ and thus $\alpha\in \Delta_0\setminus \Delta_{0,M}$. If 
$$
\mu - \mu' = \sum_{\gamma\in \Delta } \lambda_\gamma \gamma^\vee + z,  \ \lambda_\gamma\in \Z,\ \ z \in I_\Gamma.X_*(\TT)
$$
we have 
\begin{eqnarray*}
\langle  \mu^\diamond - v , \tilde{\omega}_\alpha \rangle  &=& \sum_{\substack{\gamma \in \Delta  \\ \gamma_{|A}=\alpha}} \lambda_\gamma \in \Z. 
\end{eqnarray*}
This proves that $B(G,\e, \delta)$ is contained in the announced subset of $X_*(\AA)_\Q^+$.
\\
Reciprocally, let $v\in X_*(A)_\Q^+$ satisfying the conditions of the statement. Let $M$ be the centralizer of $v$. Define 
$$
\mu'= \mu - \sum_{\alpha \in \Delta_0\setminus \Delta_{0,M}} \langle \mu^\diamond -v , \tilde{\omega}_{\alpha} \rangle \gc_\alpha \in X_*(\TT).
$$
where $\gamma _\alpha$ is any element in $\Phi $ such that $\gamma_{\alpha|A}=\alpha$.
One checks that the image of $\mu'$ in $\pi_1(M)_\Gamma$ defines an element of $B(M)_{basic}$ whose image in $B(G)$ lies in $B(G,\e,\delta)$ and whose associated slope is $v$.
\end{proof}


One deduces the following description for the usual Kottwitz set.

\begin{corollary}\label{prop:description de B G mu cas quasideploye}
As a subset of $X_* (\AA)_\Q^+$, $B(G,\mu)$ equals
$$
\big \{ v\in X_* (\AA)_\Q^+\ \big |\ \mu^\diamond - v \in\,  \langle \Phic_0\rangle _\Q\ \mathrm{and}\ \forall \alpha\in \Delta_0 \text{ s.t. } \langle v,\alpha\rangle \neq 0,  \  \langle  \mu^\diamond-v , \tilde{\omega}_\alpha\rangle \in \mathbb{N} \big \}.
$$
\end{corollary}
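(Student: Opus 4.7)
The plan is to deduce this corollary as a direct specialization of Proposition \ref{prop:calcul general B ( G , epsilon, delta)}.

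First, I would record the tautological identification
$$
B(G,\mu) = B(G,\mu^\sharp,\mu^\diamond),
$$
which is immediate from the definitions of the two sets (both impose $\kappa(b)=\mu^\sharp$ and $[\nu_b]\leq\mu^\diamond$). With this identification, the proposition describes $B(G,\mu)\subset \mathcal{N}(G) = X_*(\AA)_\Q^+$ as the set of $v\in X_*(\AA)_\Q^+$ satisfying
$$
\mu^\diamond - v \in \langle \Phic_0\rangle_\Q \qquad \text{and} \qquad \forall\,\alpha\in\Delta_0 \text{ with } \langle v,\alpha\rangle\neq 0,\ \ \langle \mu^\diamond-v,\tilde{\omega}_\alpha\rangle \geq 0 \text{ and } \langle \mu^\diamond-v,\tilde{\omega}_\alpha\rangle \in \Z.
$$

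The second step is a purely notational observation: for any real number $x$, the conditions ``$x\geq 0$'' and ``$x\in\Z$'' together amount to ``$x\in\mathbb{N}$''. Applying this to $x=\langle\mu^\diamond-v,\tilde{\omega}_\alpha\rangle$ collapses the two sub-conditions of the proposition into the single condition $\langle\mu^\diamond-v,\tilde{\omega}_\alpha\rangle \in \mathbb{N}$ that appears in the corollary. This yields exactly the stated description.

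Since this is a direct corollary of the preceding proposition (which does all the real work of parametrizing $B(G)$ as a union over standard Levis of basic classes, and then computing the slope via the projection $\mathrm{pr}_M$ and the Galois average), there is no substantive obstacle; the only point of care is to verify that the specialization $\e=\mu^\sharp$, $\delta=\mu^\diamond$ correctly retrieves the definition of $B(G,\mu)$ and that the two separate inequality and integrality conditions merge into membership in $\mathbb{N}$.
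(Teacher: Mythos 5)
Your proposal is correct and matches the paper's (implicit) deduction exactly: the corollary is obtained by taking $\e=\mu^\sharp$, $\delta=\mu^\diamond$ in Proposition \ref{prop:calcul general B ( G , epsilon, delta)}, using the identity $B(G,\mu)=B(G,\mu^\sharp,\mu^\diamond)$ recorded just before it, and merging the conditions $\langle\mu^\diamond-v,\tilde{\omega}_\alpha\rangle\geq 0$ and $\langle\mu^\diamond-v,\tilde{\omega}_\alpha\rangle\in\Z$ into $\langle\mu^\diamond-v,\tilde{\omega}_\alpha\rangle\in\mathbb{N}$. Nothing further is needed.
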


Later we will need the following.

\begin{corollary} \label{coro:description B G nubmumoins1}
Suppose $[b]\in B(G,\mu)$ is the basic element. Let 
\[B (G , 0, \nu_b\mu^{-1}) := B (G , 0, \nu_b (w_0\mu^{-1})^{\diamond}).\]
Then we have
\begin{align*}
B (G , 0, \nu_b\mu^{-1}) = \big \{ v\in X_* (\AA)_\Q^+\ \big |\  & v \in\,  \langle \Phic_0\rangle _\Q\ \mathrm{and}\ \forall \alpha\in \Delta_0 \text{ s.t. } \langle v,\alpha\rangle \neq 0,  \\  
& \langle \nu_b-w_0\mu^\diamond -v  , \tilde{\omega}_\alpha\rangle \geq 0 
 \text{ and } \langle v ,\tilde{\omega}_{\a}\rangle \in \Z \big \}.
\end{align*}
\end{corollary}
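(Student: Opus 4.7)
The plan is to derive the corollary directly from Proposition~\ref{prop:calcul general B ( G , epsilon, delta)} applied to the set $B(G,0,\delta)$ where
\[
\delta := \nu_b \cdot (w_0\mu^{-1})^\diamond = \nu_b - w_0\mu^\diamond \in X_*(\AA)_\Q^+.
\]
First I would observe that the description in Proposition~\ref{prop:calcul general B ( G , epsilon, delta)} depends only on $\e\in\pi_1(G)_\Gamma$ and not on the chosen lift $\mu\in X_*(\TT)^+$: indeed, two lifts of $\e$ differ by an element of $\langle\Phi^\vee\rangle+I_\Gamma X_*(\TT)$, which pairs to $\Z$ with $\tilde\omega_\alpha$ (absolute coroots pair to integers by the defining property of $\tilde\omega_\alpha$, while $\tilde\omega_\alpha\in X^*(\AA)$ is $\Gamma$-invariant and hence kills $I_\Gamma X_*(\TT)$). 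Applying the proposition with $\e=0$ and the trivial lift $\mu'=0$ therefore yields the description of $B(G,0,\delta)$ as the set of $v\in X_*(\AA)_\Q^+$ such that $\delta-v\in\langle\Phic_0\rangle_\Q$ and, for each $\alpha\in\Delta_0$ with $\langle v,\alpha\rangle\neq 0$, both $\langle\delta-v,\tilde\omega_\alpha\rangle\geq 0$ and $\langle v,\tilde\omega_\alpha\rangle\in\Z$.

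The second step is to show $\delta\in\langle\Phic_0\rangle_\Q$, so that the condition $\delta-v\in\langle\Phic_0\rangle_\Q$ simplifies to $v\in\langle\Phic_0\rangle_\Q$. I would split
\[
\delta = (\nu_b-\mu^\diamond) + (\mu^\diamond-w_0\mu^\diamond).
\]
The first summand lies in $\langle\Phic_0\rangle_\Q$ since $[b]$ basic in $B(G,\mu)$ forces $\nu_b$ and $\mu^\diamond$ to have the same image $\mu^\sharp$ in $\pi_1(G)_\Gamma\otimes\Q$, and the kernel of $X_*(\AA)_\Q\twoheadrightarrow \pi_1(G)_\Gamma\otimes\Q$ equals $\langle\Phi^\vee\rangle_\Q^\Gamma=\langle\Phic_0\rangle_\Q$ (relative coroots span the $\Gamma$-invariants of the rational absolute coroot space, as any Galois-invariant element is a sum of Galois-orbit sums of absolute coroots, which are proportional to relative coroots). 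The second summand is of the form $x-w_0 x$ with $x=\mu^\diamond\in X_*(\AA)_\Q$, and lies in $\langle\Phic_0\rangle_\Q$ because $w_0$ acts on $X_*(\AA)_\Q$ through the relative Weyl group, which is generated by reflections in relative coroots.

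With these two ingredients the description of $B(G,0,\delta)$ above becomes exactly the one stated in the corollary: the coroot condition reads $v\in\langle\Phic_0\rangle_\Q$, the sign inequality transcribes as $\langle\nu_b-w_0\mu^\diamond-v,\tilde\omega_\alpha\rangle\geq 0$, and the integrality condition is $\langle v,\tilde\omega_\alpha\rangle\in\Z$. I expect the main --- though rather mild --- obstacle to be the verification that $\delta\in\langle\Phic_0\rangle_\Q$, which rests on the basicness of $[b]$ together with the observation that the opposition $-w_0$ preserves the class in $\pi_1(G)_\Gamma\otimes\Q$; everything else is a direct transcription of Proposition~\ref{prop:calcul general B ( G , epsilon, delta)}.
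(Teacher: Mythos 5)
Your proof is correct and is exactly the intended derivation: the paper states this corollary without proof as an immediate specialization of Proposition \ref{prop:calcul general B ( G , epsilon, delta)} to $\e=0$ (with lift $\mu'=0$) and $\delta=\nu_b-w_0\mu^\diamond$. Your two supplementary checks --- that the integrality condition depends only on $\e$ and not on the lift, and that $\delta\in\langle\Phic_0\rangle_\Q$ (using $[\nu_b]\leq\mu^\diamond$ and $x-w_0x\in\langle\Phic_0\rangle_\Q$) so that $\delta-v\in\langle\Phic_0\rangle_\Q$ becomes $v\in\langle\Phic_0\rangle_\Q$ --- are precisely the details the paper leaves implicit.
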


\begin{remark}
The preceding set $B(G,0,\nu_b \mu^{-1})$ is the one denoted $B(G,\nu_b\mu^{-1})$ in \cite{Ra2}. This notation may be confusing since if $\nu_b=1$, for example if $G$ is adjoint, then $B(G,0,\nu_b\mu^{-1})$ is not equal to $B(G,\mu^{-1})$ is general. This is why we introduced this more precise notation. 
\end{remark}


\subsection{The non-quasi-split case}
\label{sec:description B G mu non quasi split case}

Suppose now $G$ is not necessarily quasi-split. 
We have
$$
H^1 (F,H_{ad})=  \pi_1 (H_{ad})_\Gamma = \big [ \langle \Phi  \rangle^\vee / \langle\Phi^\vee \rangle \big ]_\Gamma
$$
We see $\langle \Phi \rangle^\vee$ as a lattice in  $\langle \Phic \rangle_\Q$.
The isomorphism class of the  inner form $G$ is then given by the class of some element 
$$
\xi  \in \langle \Phi \rangle^\vee \subset \langle \Phic \rangle_\Q.
$$
Projection to the adjoint group induces a bijection (see \cite{Kot2} 4.11)
$$
B(G,\e,\delta)  \xrightarrow{\ \sim\ } B (G_{ad},\e_{ad},\delta_{ad}).
$$
Moreover 
$$
H^1 (F,H_{ad}) = B(H_{ad})_{basic}
$$
and the isomorphism class of $G$ as an inner form of $H$ is given by some $[b_G]\in B(H_{ad})_{basic}$ for which $G_{ad}=J_{b_G}$. There is then a bijection 
$$
B(G_{ad}) \xrightarrow{\ \sim\ } B (H_{ad})
$$
that sends $[1]$ to $[b_G]$. 
We  can thus see $B(G,\e,\delta)$ as a subset of $B(H_{ad})$.
We have moreover an identification $\pi_1 (G_{ad})=\pi_1 (H_{ad})$.
Via this bijection the following diagram commutes
$$
\begin{tikzcd}
B(G_{ad})  \ar[d, "\kappa_{G_{ad}}"] \arrow[r, "\sim"] & B (H_{ad}) \ar[d, "\kappa_{H_{ad}}"] \\
\pi_1 (H_{ad})_\Gamma \ar[r, "\bullet + \xi"] & \pi_1 (H_{ad})_{\Gamma}
\end{tikzcd}
$$
We thus have 
$$
B ( G,\e,\delta ) = B ( H_{ad}, \e_{ad} + \xi,\delta_{ad} ).
$$
From proposition \ref{prop:calcul general B ( G , epsilon, delta)} we deduce the following.

\begin{proposition}
If $\e =\mu^\sharp$,
as a subset of $X_*(\AA)_\Q^+$, the set $B(G,\e,\delta)$ is given by the vectors $v$ such that 
\begin{enumerate}
\item $\delta - v \in \langle \Phic_0\rangle_\Q$,
\item for all $\a\in \Delta_0$ such that 
$\langle v,\a\rangle\neq 0$ one has $\langle \delta -v, \tilde{\omega}_\a \rangle \, \geq 0$,
\item for all $\a\in \Delta_0$ such that $\langle v,\a\rangle \neq 0$, $ \langle \mu^\diamond + \xi^\diamond-v,\tilde{\omega}_\a\rangle \, \in \Z$.
\end{enumerate}
\end{proposition}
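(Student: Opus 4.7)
The plan is to descend the statement to the quasi-split Proposition \ref{prop:calcul general B ( G , epsilon, delta)} applied to $H_{ad}$, using the identification
$$B(G,\e,\delta) = B(H_{ad}, \e_{ad}+\xi, \delta_{ad})$$
established just above. To invoke the quasi-split proposition with target $H_{ad}$, I will first write $\e_{ad}+\xi$ as $\mu_1^\sharp$ for some cocharacter $\mu_1 \in X_*(\TT_{ad})$: take $\mu_1 := \mu_{ad} + \tilde\xi$, where $\mu_{ad}$ is the image of $\mu$ in $X_*(\TT_{ad}) = \langle\Phi\rangle^\vee$ and $\tilde\xi \in X_*(\TT_{ad})$ is any lift of $\xi$. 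Then $\mu_1^\sharp = \e_{ad}+\xi$, and $\mu_1^\diamond = \mu^\diamond_{ad}+\tilde\xi^\diamond$ in $X_*(\AA_{ad})_\Q$ pairs with each $\tilde\omega_\a$ in the same way as $\mu^\diamond+\xi^\diamond$ does on $X_*(\AA)_\Q$.

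Next, I will transport the three conditions characterizing $B(H_{ad},\mu_1^\sharp,\delta_{ad})$ back to conditions on $v \in X_*(\AA)_\Q^+$ via the split projection
$$X_*(\AA)_\Q = X_*(Z_H)_\Q^\Gamma \oplus \langle\Phi_0^\vee\rangle_\Q \twoheadrightarrow X_*(\AA_{ad})_\Q, \qquad v \mapsto v_{ad}.$$
The clean translation rests on the fact that every root $\alpha\in\Delta_0$ and every weight $\tilde\omega_\a \in X^*(\AA)_\Q$ (a Galois-average of absolute fundamental weights) vanishes on the connected center of $H$. Hence $\langle v,\alpha\rangle = \langle v_{ad},\alpha\rangle$ and $\langle\cdot,\tilde\omega_\a\rangle$ factors through $X_*(\AA_{ad})_\Q$, so conditions (2) and (3) of the statement transcribe verbatim the last two conditions of the quasi-split proposition applied to $(H_{ad},\mu_1^\sharp,\delta_{ad})$. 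Condition (1), $\delta-v\in\langle\Phi_0^\vee\rangle_\Q$, decomposes along the direct sum above into equality of the central parts of $\delta$ and $v$ together with $\delta_{ad}-v_{ad}\in\langle\Phi_0^\vee\rangle_\Q$, which is the first condition of the quasi-split proposition; and dominance $v\in X_*(\AA)_\Q^+$ is equivalent to $v_{ad}\in X_*(\AA_{ad})_\Q^+$, since the positive chamber is cut out by pairings with positive coroots, which kill the center.

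I do not anticipate any significant obstacle: the whole argument is a formal descent to the adjoint quotient, the only care needed being the bookkeeping of central versus semisimple contributions, which is handled uniformly by the orthogonality of the $\tilde\omega_\a$ and of $\Phi_0$ to the center. The only slightly delicate point is tracking that $\xi$, though a priori only a class in $\pi_1(H_{ad})_\Gamma$, can be promoted to a cocharacter $\tilde\xi \in X_*(\TT_{ad})$ well-defined enough for the integrality condition $\langle\mu^\diamond+\xi^\diamond-v,\tilde\omega_\a\rangle\in\Z$ to make invariant sense; but any two lifts differ by an element of $\langle\Phi^\vee\rangle$, so their pairings with $\tilde\omega_\a$ differ by an integer, ensuring the condition depends only on $\xi$ itself.
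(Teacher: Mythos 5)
Your proposal is correct and follows the same route as the paper, which establishes the identification $B(G,\e,\delta)=B(H_{ad},\e_{ad}+\xi,\delta_{ad})$ in the two paragraphs preceding the statement and then simply deduces the proposition from the quasi-split case; you supply the bookkeeping (matching of central parts, orthogonality of $\Phi_0$ and the $\tilde{\omega}_\a$ to the center, replacing $\mu^\sharp$ by $(\mu_{ad}+\tilde{\xi})^\sharp$) that the paper leaves implicit. The only nitpick is that two lifts of a class in $\pi_1(H_{ad})_\Gamma$ differ by an element of $\langle\Phi^\vee\rangle+I_\Gamma\cdot X_*(\TT_{ad})$ rather than of $\langle\Phi^\vee\rangle$ alone, but since the Galois average kills the $I_\Gamma$-part your conclusion that $\langle\xi^\diamond,\tilde{\omega}_\a\rangle$ is well defined modulo $\Z$ stands.
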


This specializes to the two following statements.

\begin{corollary}
As a subset of $X_*(\AA)_\Q^+$, the set $B(G,\mu)$ is given by the vectors $v$ such that 
\begin{enumerate}
\item $\mu^\diamond - v \in \langle \Phic_0\rangle_\Q$,
\item for all $\a\in \Delta_0$ such that $\langle v,\a\rangle\neq 0$ one has $\langle \mu^{\diamond}-v, \tilde{\omega}_\a \rangle \, \geq 0$,
\item for all $\a\in \Delta_0$ such that $\langle v,\a\rangle \neq 0$, $ \langle \mu^\diamond + \xi^\diamond-v,\tilde{\omega}_\a\rangle \, \in \Z$.
\end{enumerate}
\end{corollary}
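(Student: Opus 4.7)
The plan is to observe that this corollary is simply the specialization of the immediately preceding proposition to the case where the generalized Kottwitz data $(\e,\delta)$ comes from a cocharacter, and essentially no further work is needed.

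First I would recall the definition $B(G,\mu) = B(G,\mu^\sharp,\mu^\diamond)$ given at the beginning of section~4 (just before proposition~\ref{prop:calcul general B ( G , epsilon, delta)}'s analogue in the non-quasi-split setting). So I am in exactly the situation of the proposition with $\e=\mu^\sharp$ and $\delta=\mu^\diamond$. The hypothesis $\e=\mu^\sharp$ of that proposition is automatic, so all three bullet conditions apply verbatim.

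Next I would simply substitute $\delta = \mu^\diamond$ into each of the three conditions of the proposition. Condition (1) becomes $\mu^\diamond - v\in \langle\Phic_0\rangle_\Q$; condition (2) becomes $\langle \mu^\diamond-v,\tilde{\omega}_\a\rangle\geq 0$ for all $\a\in\Delta_0$ with $\langle v,\a\rangle\neq 0$; condition (3) is unchanged since it already refers to $\mu^\diamond+\xi^\diamond-v$ and does not involve $\delta$. This reproduces exactly the three conditions in the statement of the corollary.

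There is no real obstacle here; the only thing to verify carefully is the bookkeeping ensuring that $B(G,\mu)$ as defined (using $\mu^\sharp$ and $\mu^\diamond$) is literally $B(G,\mu^\sharp,\mu^\diamond)$ in the notation of the previous subsection, which is immediate from the definitions. Thus the proof is a one-line application: ``Apply the previous proposition to $\e=\mu^\sharp$ and $\delta=\mu^\diamond$.''
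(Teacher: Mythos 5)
Your proposal is correct and matches the paper exactly: the corollary is obtained by specializing the preceding proposition to $\e=\mu^\sharp$ and $\delta=\mu^\diamond$, using the identity $B(G,\mu)=B(G,\mu^\sharp,\mu^\diamond)$, which is precisely what the paper intends when it writes ``This specializes to the two following statements.'' No further argument is needed.
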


\begin{corollary}\label{coro:vrai coro utilise}
Suppose $[b]\in B(G,\mu)$ is the basic element. Then 
\begin{align*}
B (G , 0, \nu_b\mu^{-1}) = \big \{ v\in X_* (\AA)_\Q^+\ \big |\  v \in\,  \langle \Phic_0\rangle _\Q\ \mathrm{and}\ \forall \alpha\in \Delta_0 \text{ s.t. } \langle v,\alpha\rangle \neq 0, \\  \langle \nu_b-w_0\mu^\diamond -v  , \tilde{\omega}_\alpha\rangle \geq 0 
 \text{ and } \langle  v - \xi^\diamond ,\tilde{\omega}_{\a}\rangle \in \Z \big \}.
\end{align*}
\end{corollary}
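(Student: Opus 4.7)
The plan is to obtain the corollary as a direct specialization of the preceding proposition. I would apply that proposition with the reference cocharacter taken to be the \emph{zero} cocharacter of $G$ (so that $\e = \mu^\sharp = 0$ and the $\mu^\diamond$ appearing in its condition (3) vanishes) and with $\delta = \nu_b - w_0\mu^\diamond$, where $\mu$ refers now to the minuscule cocharacter from the statement of the corollary. The integrality condition of item (3) then specializes to $\langle \xi^\diamond - v, \tilde{\omega}_\alpha\rangle \in \Z$, which is equivalent to $\langle v - \xi^\diamond, \tilde{\omega}_\alpha\rangle \in \Z$ and matches condition (3) of the corollary. The positivity condition of item (2) reads verbatim as in the corollary.

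It remains to rewrite item (1), namely $\nu_b - w_0\mu^\diamond - v \in \langle \Phic_0\rangle_\Q$, as the condition $v \in \langle \Phic_0\rangle_\Q$. For this I would verify that $\nu_b - w_0\mu^\diamond$ already lies in $\langle \Phic_0\rangle_\Q$, using two facts about the basic element $[b] \in B(G,\mu)$ with $\kappa(b) = \mu^\sharp$. First, the slope $\nu_b$ is central, and its image in $\pi_1(G)_\Gamma \otimes \Q$ coincides with that of $\mu^\diamond$; since the kernel of the projection $X_*(\AA)_\Q \twoheadrightarrow \pi_1(G)_\Gamma \otimes \Q$ is exactly $\langle \Phic_0\rangle_\Q$, this gives $\nu_b - \mu^\diamond \in \langle \Phic_0\rangle_\Q$. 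Second, $\mu^\diamond$ and $w_0\mu^\diamond$ differ by an element of $\langle \Phic_0\rangle_\Q$, as do any two Weyl-conjugate elements of $X_*(\AA)_\Q$. Summing these two observations yields the desired containment.

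The main subtlety is to justify that the preceding proposition legitimately applies with $\e = 0$, since it is formally stated under the hypothesis $\e = \mu^\sharp$. Inspection of its proof shows, however, that $\mu$ intervenes only through a chosen integral lift of $\e$ to $X_*(\TT)$; taking the zero lift when $\e = 0$, the argument via basic elements of standard Levi subgroups and the congruence $\kappa_M(\mu') \equiv \e \pmod{\langle \Phic\rangle + I_\Gamma X_*(\TT)}$ goes through unchanged, producing integrality against $\xi^\diamond$ in place of $\mu^\diamond + \xi^\diamond$. Once this point is registered, the derivation requires no further calculation, and the corollary follows from the dictionary above.
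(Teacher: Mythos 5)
Your proposal is correct and follows exactly the route the paper intends: the corollary is stated as a specialization of the preceding proposition with $\e=0$ (zero lift) and $\delta=\nu_b-w_0\mu^\diamond$, and your verification that $\nu_b-w_0\mu^\diamond\in\langle\Phic_0\rangle_\Q$ (via $\kappa(b)=\mu^\sharp$ and Weyl-conjugacy) is precisely the point needed to rewrite condition (1) as $v\in\langle\Phic_0\rangle_\Q$. Your remark that the hypothesis $\e=\mu^\sharp$ merely fixes an integral lift, so that $\e=0$ is admissible with lift $0$, correctly resolves the only subtlety.
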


\begin{remark}
When $G$ is unramified the root system we use is identical to the one used by He and the statement of the preceding proposition is identical to He's one. This is not the case anymore in general, even if $G$ is quasi-split. One can compare He's result with ours using theorem 6.1 of \cite{HainesDuality} to obtain that the $(\tilde{\omega}_\a)_{\a\in \Delta_0}$ are exactly the $\omega_{\mathcal{O}}$ of \cite{HN}  when $\Ol$ goes through the set of $\s_0$-orbits of simple roots in the Bruhat-Tits \'echelonnage root system attached to $G_{\Fb}$. An analysis of the construction of this root systems shows that we can take $\xi=\s(0)$ with the notations of \cite{HN}. 
\end{remark}


\subsection{HN decomposability}
\label{sec:sub sec HN decomposability}

Let us recall the following definition. Here $G$ is not necessarily quasi-split. 
\begin{definition}\label{def:HN decomposable}
The set $B(G,\mu)$ is fully HN decomposable if for any non-basic $[b]\in B(G,\mu)$ there exists a standard strict Levi subgroup $M$ of the quasi-split inner form $H$ such that:
\begin{enumerate}
\item the centralizer of $[\nu_b]$ is contained in $M$,
\item 
 $\mu^\diamond-[\nu_b] \in \langle \Phic_{0,M}\rangle _\Q$.
\end{enumerate}
\end{definition}

In the quasi-split case we have the following equivalent definition. As before we suppose $\mu \in X_*(\TT)^+$ which defines a cocharater with values in $M$ for any standard Levi subgroup $M$ (the conjugacy class $\{\mu\}$ does not define a unique conjugacy class in such an $M$, it is important to fix this).

\begin{lemma}\label{lemma:def eq fully HN dec cas quasi deploye}
For $G$ quasi-split the following are equivalent:
\begin{enumerate}
\item $B(G,\mu)$ is fully HN decomposable,
\item for any non basic $[b]\in B(G,\mu)$ there exists a strict standard Levi subgroup $M$  containing $M_b$ the centralizer of $[\nu_b]$ such that  $[b_M]\in B (M,\mu)$,
\item for any non basic $[b]\in B(G,\mu)$ there exists a strict standard Levi subgroup $M$  containing $M_b$ the centralizer of $[\nu_b]$ such that $\kappa_M (b_M)= \mu^{\sharp}\in \pi_1(M)_\Gamma  $.
\end{enumerate}
where $b_M$ is the reduction of $b$ to $M$ deduced from its canonical reduction to $M_b$ and the inclusion $M_b\subset M$.
\end{lemma}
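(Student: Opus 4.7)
My plan is to prove the equivalences by running the cycle $(2)\Rightarrow (3)\Rightarrow (1)\Rightarrow (2)$, always with the same strict standard Levi $M\supset M_b$. The implication $(2)\Rightarrow (3)$ is immediate, since the equality $\kappa_M(b_M)=\mu^\sharp$ in $\pi_1(M)_\Gamma$ is part of the defining data of $B(M,\mu)$.

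For $(3)\Rightarrow (1)$, I would tensor the equality $\kappa_M(b_M)=\mu^\sharp_M$ with $\Q$. Under the identification $\pi_1(M)_\Gamma\otimes\Q=X_*(Z_M)_\Q^\Gamma$ recalled in section~\ref{Newton and Kottwitz map}, together with the decomposition $X_*(\TT)_\Q=\langle \Phic_M\rangle_\Q\oplus X_*(Z_M)_\Q$, the rational Kottwitz invariants $\kappa_M(b_M)\otimes 1$ and $\mu^\sharp_M\otimes 1$ are exactly the projections onto $X_*(Z_M)_\Q^\Gamma$ of $\nu_{b_M}=\nu_b$ and of $\mu^\diamond$ respectively. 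Their equality forces $\mu^\diamond-\nu_b\in \langle \Phic_M\rangle_\Q$, and intersecting with $X_*(\TT)_\Q^\Gamma$ yields $\mu^\diamond-\nu_b\in \langle \Phic_M\rangle_\Q^\Gamma=\langle \Phic_{0,M}\rangle_\Q$, which is condition~(1).

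For $(1)\Rightarrow (2)$, I would verify the two defining conditions of $B(M,\mu)$ for the Newton point $\nu_{b_M}=\nu_b$. The slope inequality $\mu^\diamond-\nu_b\in \Q_{\geq 0}\Delta_{0,M}^\vee$ follows from $(1)$ combined with $\mu^\diamond-\nu_b\in \Q_{\geq 0}\Delta_0^\vee$ (given by $[b]\in B(G,\mu)$) and the fact that $\{\alpha^\vee\}_{\alpha\in \Delta_0}$ is a $\Q$-basis of $\langle \Phic_0\rangle_\Q$: the coefficients of $\mu^\diamond-\nu_b$ indexed by $\Delta_0\setminus \Delta_{0,M}$ must vanish. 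For the Kottwitz identity in $\pi_1(M)_\Gamma$, I would invoke the explicit construction from the proof of proposition~\ref{prop:calcul general B ( G , epsilon, delta)}, which realizes $[b]$ as the image in $B(G)$ of the basic class of $B(M_b)$ coming from
\[\mu'=\mu-\sum_{\alpha\in \Delta_0\setminus \Delta_{0,M_b}}\langle \mu^\diamond-\nu_b,\tilde\omega_\alpha\rangle\gamma_\alpha^\vee\in X_*(\TT),\]
for any choice of $\gamma_\alpha\in \Delta$ with $\gamma_{\alpha|\AA}=\alpha$. Then condition $(1)$, together with the duality $\langle (\gamma^\vee)^\diamond,\tilde\omega_\alpha\rangle=0$ when $\gamma_{|\AA}\neq \alpha$, forces $\langle \mu^\diamond-\nu_b,\tilde\omega_\alpha\rangle=0$ for every $\alpha\in \Delta_0\setminus \Delta_{0,M}$; the sum therefore collapses to $\alpha\in \Delta_{0,M}\setminus \Delta_{0,M_b}$, and by choosing each such $\gamma_\alpha\in \Delta_M$ we obtain $\mu-\mu'\in \langle \Phic_M\rangle$, hence $\kappa_M(b_M)=\mu^\sharp_M$ in $\pi_1(M)_\Gamma$.

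The central technical point, and the main obstacle, is precisely the Kottwitz half of $(1)\Rightarrow (2)$: condition $(1)$ is a purely rational statement whereas the sought identity lives in the possibly torsion group $\pi_1(M)_\Gamma$. This is why I cannot content myself with tensoring with $\Q$ but must instead exhibit an explicit integral cocharacter $\mu'$ witnessing the class of $b$, and use $(1)$ to guarantee that the correcting sum $\mu-\mu'$ has support only in $\Delta_M$ so as to lie in $\langle \Phic_M\rangle$.
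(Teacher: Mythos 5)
Your proof is correct in outline but takes a genuinely different route from the paper's. The paper disposes of the whole lemma in one line: the only non-trivial point is the integral Kottwitz identity in $(1)\Rightarrow(2)/(3)$, and there the difference $\kappa_M(b_M)-\mu^\sharp$ is torsion in $\pi_1(M)_\Gamma$ (by the rational computation you also perform) and dies in $\pi_1(G)_\Gamma$, so it vanishes because $\pi_1(M)_{\Gamma,tor}\hookrightarrow\pi_1(G)_{\Gamma,tor}$ --- an arithmetic input identified with the injectivity of $H^1(F,M)\to H^1(F,G)$. You replace this cohomological input by the explicit combinatorics of proposition \ref{prop:calcul general B ( G , epsilon, delta)}, which is more self-contained but longer; your treatment of $(2)\Rightarrow(3)$, of $(3)\Rightarrow(1)$, and of the slope inequality in $(1)\Rightarrow(2)$ is exactly right.

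One step in your $(1)\Rightarrow(2)$ needs more care. You assert that the canonical reduction $b_{M_b}$ has Kottwitz point $(\mu')^\sharp$, with $\mu'$ the explicit cocharacter from the ``reciprocal'' half of the proof of proposition \ref{prop:calcul general B ( G , epsilon, delta)}. That construction produces \emph{some} class in $B(M_b)_{basic}$ with $M_b$-central $G$-dominant slope $\nu_b$ mapping to $[b]$; to conclude that it is \emph{the} canonical reduction you need uniqueness of such a class, i.e.\ that $B(M_b)_{basic}\to B(G)$ is injective on classes with a fixed $G$-dominant slope whose centralizer is $M_b$ --- and the standard proof of that uniqueness is again the torsion injectivity $\pi_1(M_b)_{\Gamma,tor}\hookrightarrow\pi_1(G)_{\Gamma,tor}$, so the paper's input reappears in disguise. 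You can either cite this uniqueness (it is standard over a $p$-adic field), or avoid it altogether by running the \emph{first} half of the proof of proposition \ref{prop:calcul general B ( G , epsilon, delta)} instead: write $\kappa_{M_b}(b_{M_b})$ as the class of an arbitrary $\mu''\in X_*(\TT)$, use $\kappa_G(b)=\mu^\sharp$ to write $\mu-\mu''=\sum_{\gamma\in\Delta}\lambda_\gamma\gamma^\vee+z$ with $z\in I_\Gamma.X_*(\TT)$, observe that $\sum_{\gamma_{|\AA}=\alpha}\lambda_\gamma=\langle\mu^\diamond-\nu_b,\tilde{\omega}_\alpha\rangle=0$ for every $\alpha\in\Delta_0\setminus\Delta_{0,M}$ by hypothesis (1), and conclude directly that $\mu-\mu''\in\langle\Phic_M\rangle+I_\Gamma.X_*(\TT)$, i.e.\ $\kappa_M(b_M)=\mu^\sharp$ in $\pi_1(M)_\Gamma$.
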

\begin{proof}
This is easily deduced from the fact that $\pi_1 (M)_{\Gamma, tor}\hookrightarrow \pi_1 (G)_{\Gamma,tor}$ since this is identified with the injective map $H^1 (F,M)\rightarrow H^1 (F,G)$
(\cite{SerreCohoGal} ex. 1 p. 136).
\end{proof}


Before going further, 
let us remark that for $\a\in \Delta_0$, $\tilde{\omega}_\a\in \Q_{\geq 0} \Phi^+_0$ since for $\beta\in \Delta $, $\omega_\beta\in \Q_{\geq 0}\Phi ^+$. In particular, $$\langle\mu^{\diamond}, \tilde{\omega}_\a\rangle\geq 0.$$
 We remark too that if $\xi\in \langle \Phi \rangle^\vee$ is as in the preceding section, then one can form $\langle \xi^\diamond , \tilde{\omega}_\a\rangle \in \Q$. The reduction modulo $\Z$ of this quantity depends only on the class of $\xi$ in $H^1(F,H_{ad})=\pi_1 (H_{ad})_\Gamma$ and this defines a character
$$
\langle (-)^\diamond, \tilde{\omega}_\alpha\rangle : H^1 (F,H_{ad}) \longrightarrow \Q/\Z.
$$
We note $\{.\}: \Q/\Z \rightarrow [0,1[$ the fractional part lift.
The proof of the following proposition is then strictly identical to the one of the equivalence between (1) and (2) in theorem 2.3 of \cite{GoHeNi} (cf. the proof of \ref{prop:minute pour B G 0 nub mu moins 1} for this type of proof with our notations).

\begin{proposition}[Minute criterion]
The set $B(G,\mu)$ is fully HN decomposable if and only if for all $\a\in \Delta_0$, $\langle \mu^\diamond,\tilde{\omega}_\a\rangle + \{\langle \xi^\diamond, \tilde{\omega}_\a\rangle\} \, \leq 1$.
\end{proposition}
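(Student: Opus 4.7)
My plan is to imitate the proof of the equivalence $(1)\Leftrightarrow(2)$ of theorem 2.3 of \cite{GoHeNi}, translating everything into arithmetic conditions via the description of $B(G,\mu)$ in the preceding corollary. For $v\in X_\ast(\AA)_\Q^+$ and $\a\in \Delta_0$, set
\[
d_\a(v):=\langle \mu^\diamond-v,\tilde{\omega}_\a\rangle,\qquad t_\a:=\{\langle \xi^\diamond,\tilde{\omega}_\a\rangle\}\in [0,1).
\]
Membership $v\in B(G,\mu)$ then translates to $\mu^\diamond-v\in \langle \Phic_0\rangle_\Q$ together with $d_\a(v)\geq 0$ and $d_\a(v)+\langle \xi^\diamond,\tilde{\omega}_\a\rangle\in \Z$ for every $\a\in \Delta_0$ with $\langle v,\a\rangle\neq 0$. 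The minimal strictly positive admissible value of $d_\a(v)$ is therefore $1-t_\a$ when $t_\a>0$, and $1$ when $t_\a=0$. Moreover, by Lemma \ref{lemma:def eq fully HN dec cas quasi deploye} (and the inner-twisting translation of section \ref{sec:description B G mu non quasi split case} in the non-quasi-split case), a non-basic $[b]\in B(G,\mu)$ is HN decomposable if and only if $d_\a([\nu_b])=0$ for some $\a\in \Delta_0$ with $\langle [\nu_b],\a\rangle\neq 0$.

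For $(\Leftarrow)$: assuming minute, I argue by contradiction. If $[b]\in B(G,\mu)$ is non-basic and not HN decomposable, set $v:=[\nu_b]$ and $S:=\{\a\in \Delta_0\,:\,\langle v,\a\rangle\neq 0\}\neq\emptyset$; then $d_\a(v)\geq 1-t_\a$ for every $\a\in S$. Dominance of $v$ gives $\langle v,\tilde{\omega}_\a\rangle\geq 0$ (the inverse Cartan matrix has nonnegative entries, so the coefficient of $\a^\vee$ in the coroot expansion of a dominant vector is $\geq 0$), hence $\langle \mu^\diamond,\tilde{\omega}_\a\rangle\geq 1-t_\a$; minute forces equality, so $\langle v,\tilde{\omega}_\a\rangle=0$ for every $\a\in S$. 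Thus the coroot-basis expansion of $v$, modulo its central part, is supported in $\Delta_{0,M_v}=\Delta_0\setminus S$. Combined with $\a(v)=0$ for every $\a\in \Delta_{0,M_v}$ and the invertibility of the Cartan matrix of the semisimple part of $M_v$, this forces $v$ central, contradicting non-basicness of $[b]$. For $(\Rightarrow)$: if minute fails at some $\a_0\in \Delta_0$, take $M$ the maximal strict standard Levi with $\Delta_{0,M}=\Delta_0\setminus\{\a_0\}$ and construct $v\in X_\ast(Z(M))_\Q^+\cap B(G,\mu)$ with $\mu^\diamond-v$ a positive multiple of the relative coroot $\a_0^\vee$ such that $d_{\a_0}(v)$ realizes the minimal admissible positive value ($1-t_{\a_0}$ or $1$). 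The strict failure of minute supplies enough ``budget'' for $v$ to be dominant with $\langle v,\a_0\rangle>0$. By construction $M_v=M$, $S=\{\a_0\}$, and $\mu^\diamond-v\notin \langle \Phic_{0,M}\rangle_\Q$, so the only strict Levi containing $M_v$ is $M$ itself and $v$ is not HN decomposable.

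The main obstacle I anticipate is the concluding step of $(\Leftarrow)$: carefully verifying that the equality regime forces $v$ to be central. This requires combining the vanishing of $\langle v,\tilde{\omega}_\a\rangle$ for $\a\in S$ (from the equality in the minute inequality), the defining vanishing of $\a(v)$ for $\a\in \Delta_{0,M_v}$, and the invertibility of the Cartan matrix of the semisimple part of $M_v$. A secondary point is checking, in $(\Rightarrow)$, that the handcrafted witness satisfies the $\xi$-twisted integrality condition; this is automatic once the coefficient of $\a_0^\vee$ subtracted from $\mu^\diamond$ is chosen to realize the minimal admissible positive $d_{\a_0}$.
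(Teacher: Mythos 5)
Your $(\Leftarrow)$ direction is correct and is essentially the argument the paper intends (it defers to the proof of proposition \ref{prop:minute pour B G 0 nub mu moins 1}): the key decomposition of $\langle \mu^\diamond,\tilde\omega_\a\rangle+t_\a$ into the nonnegative rational $d_\a(v)$ plus a nonnegative integer is exactly the displayed computation there. The only difference is cosmetic: the paper derives a contradiction separately for each $\a$ with $\langle v,\a\rangle\neq 0$ (a dominant $v$ with $\langle v,\a\rangle\neq 0$ cannot have $\langle v,\tilde\omega_\a\rangle=0$, since its non-central part has nonnegative simple-coroot coefficients and the $\a^\vee$-coefficient would have to vanish), whereas you aggregate over all of $S$ and invoke nondegeneracy of the Cartan matrix of $M_v$. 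Both are valid and rest on the same facts.

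The gap is in $(\Rightarrow)$: your witness is overdetermined and in general does not exist. You require simultaneously $v\in X_*(Z(M))_\Q$, i.e.\ $\langle v,\beta\rangle=0$ for all $\beta\in\Delta_0\setminus\{\a_0\}$, and $\mu^\diamond-v\in\Q_{>0}\,\a_0^\vee$. Writing $v=\mu^\diamond-c\a_0^\vee$, the first condition forces $\langle\mu^\diamond,\beta\rangle=c\langle\a_0^\vee,\beta\rangle\leq 0$ for every $\beta\neq\a_0$, which fails whenever $\mu^\diamond$ is not orthogonal to some such $\beta$. Concretely, for $G=\GL_5$ split, $\mu=(1,1,0,0,0)$ and $\a_0=e_2-e_3$ (where minute fails: $\langle\mu,\tilde\omega_{\a_0}\rangle=6/5$), the witness produced by the paper's recipe is $v=(\tfrac12,\tfrac12,\tfrac13,\tfrac13,\tfrac13)$, and $\mu-v=(\tfrac12,\tfrac12,-\tfrac13,-\tfrac13,-\tfrac13)$ is not a multiple of $\a_0^\vee$. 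Worse, if you insist on $\mu^\diamond-v=c\a_0^\vee$ with $c>0$ and drop the centrality requirement instead, then $d_\beta(v)=c\langle\a_0^\vee,\tilde\omega_\beta\rangle=0$ for every $\beta\neq\a_0$ while $\langle v,\beta\rangle$ is typically nonzero for some such $\beta$, so by your own criterion this $v$ \emph{is} HN decomposable. The repair is the construction of proposition \ref{prop:minute pour B G 0 nub mu moins 1} transposed to $B(G,\mu)$: impose only $v\in X_*(Z_M)_\Q^+$, $\mu^\diamond-v\in\langle\Phic_0\rangle_\Q$, and $d_{\a_0}(v)=1-t_{\a_0}$ (resp.\ $1$ if $t_{\a_0}=0$); since then $\{\a:\langle v,\a\rangle\neq0\}=\{\a_0\}$, membership in $B(G,\mu)$ and non-decomposability need only be checked at $\a_0$, and the components of $\mu^\diamond-v$ along the other simple coroots are irrelevant. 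Your remaining observations — that the prescribed $d_{\a_0}$ makes the $\xi$-twisted integrality automatic, and that strict failure of minute gives $\langle v,\tilde\omega_{\a_0}\rangle>0$, hence dominance and $\langle v,\a_0\rangle>0$ — then do complete the argument.
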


In particular if $G$ is quasi-split this is reduced to the condition 
$$
\langle \mu^{\diamond},\tilde{\omega}_\alpha\rangle \leq 1.
$$
Of course, in this case,  since $\langle \mu^{\diamond}, \tilde{\omega}_\a\rangle = \langle \mu, \tilde{\omega}_\alpha \rangle$, this can be rephrased in the following way:
$$
\forall\, \Ol,\text{ a }\Gamma\text{-orbit in }\Delta, \ \ \sum_{\beta\in \Ol}     \langle \mu, \omega_\beta \rangle  \leq 1.
$$
%
%
%

We will need the following later. 
Let $[b]\in B(G,\mu)$ be the basic element.
The full HN decomposability notion extends immediately to the set $B(G,0, \nu_b\mu^{-1})$. 

\begin{proposition}\label{prop:minute pour B G 0 nub mu moins 1}
The set $B(G,0,\nu_b \mu^{-1})$ is fully HN decomposable if and only if for all $\a\in \Delta_0$ one has $\langle \mu^\diamond ,\tilde{\omega}_\a\rangle + \{ \langle \xi^\diamond, \tilde{\omega}_\a \rangle \} \leq 1$.
\end{proposition}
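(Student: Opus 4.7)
The plan is to mimic the proof of the preceding Minute Criterion for $B(G,\mu)$, substituting the description of $B(G, 0, \nu_b \mu^{-1})$ from Corollary \ref{coro:vrai coro utilise} in place of that of $B(G,\mu)$. Setting $\delta := \nu_b - w_0 \mu^\diamond$, this set consists of the dominant $v \in \langle \Phic_0 \rangle_\Q$ such that for every $\alpha$ in the support $S_v := \{\alpha \in \Delta_0 \,|\, \langle v, \alpha \rangle \neq 0\}$ one has $\langle \delta - v, \tilde{\omega}_\alpha \rangle \geq 0$ and $\langle v - \xi^\diamond, \tilde{\omega}_\alpha \rangle \in \Z$. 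Viewing the $\tilde{\omega}_\alpha$'s as coordinates dual to the relative simple coroots, HN decomposability of a non-basic $v$ amounts to the existence of some $\alpha_0 \in S_v$ with $\langle \delta - v, \tilde{\omega}_{\alpha_0} \rangle = 0$: the standard Levi $M$ determined by $\Delta_{0, M} = \Delta_0 \setminus \{\alpha_0\}$ then contains the centralizer of $v$ and satisfies $\delta - v \in \langle \Phic_{0,M} \rangle_\Q$. A preliminary computation, using $w_0 \tilde{\omega}_\alpha = -\tilde{\omega}_{\alpha^*}$ with $\alpha^* := -w_0 \alpha$ together with $\nu_b \in X_*(Z_H)_\Q^\Gamma$ and $\tilde{\omega}_\alpha \in \langle \Phi \rangle_\Q$ (whence $\langle \nu_b, \tilde{\omega}_\alpha \rangle = 0$), yields $\langle \delta, \tilde{\omega}_\alpha \rangle = \langle \mu^\diamond, \tilde{\omega}_{\alpha^*} \rangle$, so the decomposability criterion becomes $\langle v, \tilde{\omega}_{\alpha_0} \rangle = \langle \mu^\diamond, \tilde{\omega}_{\alpha_0^*} \rangle$.

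For the implication $(\Leftarrow)$, assume the minute inequality and take a non-basic non-HN-decomposable $v$. Since the inverse relative Cartan matrix has non-negative entries with strictly positive diagonal, dominance of $v$ forces $\langle v, \tilde{\omega}_\alpha \rangle > 0$ for every $\alpha \in S_v$; combined with the lattice condition $\langle v, \tilde{\omega}_\alpha \rangle \equiv \langle \xi^\diamond, \tilde{\omega}_\alpha \rangle \pmod{\Z}$ and the strict inequality $\langle v, \tilde{\omega}_\alpha \rangle < \langle \mu^\diamond, \tilde{\omega}_{\alpha^*} \rangle$, this forces $\langle \mu^\diamond, \tilde{\omega}_{\alpha^*} \rangle + \{\langle \xi^\diamond, \tilde{\omega}_{\alpha^*} \rangle\} > 1$ at the index $\alpha^* \in \Delta_0$ (via a short case-split according to whether $\langle \xi^\diamond, \tilde{\omega}_\alpha \rangle$ is an integer), contradicting the minute hypothesis. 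For the implication $(\Rightarrow)$, argue contrapositively: if the minute inequality fails at some $\alpha_0 \in \Delta_0$, exhibit an explicit non-basic $v \in B(G, 0, \nu_b \mu^{-1})$ of the form $\delta - c \cdot ((\alpha_0^*)^\vee)^\diamond$, with $c > 0$ rational chosen to land in the appropriate lattice coset, witnessing non-decomposability at $\alpha_0^*$.

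The main obstacle will be the bookkeeping in the non-quasi-split case: the lattice condition in Corollary \ref{coro:vrai coro utilise} pairs $\xi^\diamond$ against $\tilde{\omega}_\alpha$, while the opposition involution introduced by $-w_0$ in $\delta$ pairs $\mu^\diamond$ against $\tilde{\omega}_{\alpha^*}$, so the two shifts by $\alpha \leftrightarrow \alpha^*$ must be tracked carefully. Only upon invoking the involutive nature of $\alpha \mapsto \alpha^*$ and the universal quantifier in the minute condition does one see that the resulting criterion is exactly the same minute inequality as for $B(G, \mu)$, and not a variant involving $\alpha^*$ or $\mu^{-1}$; verifying this collapse is the delicate point that ensures Corollary \ref{coro:fully HN dec ssi idem} really follows.
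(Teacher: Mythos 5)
Your overall strategy coincides with the paper's, and your first half (minute inequality implies full HN decomposability) is essentially the paper's argument: the reduction of decomposability to the vanishing of some $\langle \delta - v, \tilde{\omega}_{\alpha}\rangle$ with $\alpha$ in the support of $v$, the identity $\langle \delta, \tilde{\omega}_\alpha\rangle = \langle \mu^\diamond, \tilde{\omega}_{\alpha^*}\rangle$, the strict positivity of $\langle v,\tilde{\omega}_\alpha\rangle$ for $\alpha$ in the support, and the congruence $\langle \xi^\diamond,\tilde{\omega}_{\alpha^*}\rangle \equiv -\langle \xi^\diamond,\tilde{\omega}_\alpha\rangle \pmod{\Z}$ coming from $w_0\xi-\xi\in\langle\Phi^\vee\rangle$ are exactly the ingredients used there. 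That direction is correct.

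The converse direction has a genuine gap: the proposed witness $v=\delta-c\,((\alpha_0^*)^\vee)^\diamond$ does not do the job. With this choice $\delta-v=c\,((\alpha_0^*)^\vee)^\diamond$, so $\langle\delta-v,\tilde{\omega}_\alpha\rangle=0$ for \emph{every} $\alpha\neq\alpha_0^*$. Since $\langle v,\alpha\rangle=\langle\delta,\alpha\rangle-c\,\langle((\alpha_0^*)^\vee)^\diamond,\alpha\rangle$ has no reason to vanish for $\alpha\neq\alpha_0^*$, the support $S_v$ will in general contain such an $\alpha$, and then the maximal Levi $M$ with $\Delta_{0,M}=\Delta_0\setminus\{\alpha\}$ contains the centralizer of $v$ and satisfies $\delta-v\in\langle\Phi^\vee_{0,M}\rangle_\Q$: your $v$ is HN \emph{decomposable}, so it witnesses nothing. (Already for $\GL_n$, $\mu=(1,0,\dots,0)$ and $v=\mu^\diamond-c\,\alpha_1^\vee$ with $0<c<1/2$ has support $\{\alpha_1,\alpha_2\}$ and is decomposed by $\alpha_2$.) Moreover $v$ need not even be dominant when $\langle\delta,\alpha_0^*\rangle=0$. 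The correct witness is built the other way around, as in the paper: take $v$ with $\langle v,\alpha\rangle=0$ for all $\alpha\neq\alpha_0^*$, i.e.\ a positive multiple of the fundamental coweight at $\alpha_0^*$ projected into $\langle\Phi_0^\vee\rangle_\Q$, normalized by $\langle v,\tilde{\omega}_{\alpha_0^*}\rangle=1-\{-\langle\xi^\diamond,\tilde{\omega}_{\alpha_0^*}\rangle\}$ so that the integrality constraint holds. Then $v$ is dominant, the failure of the minute inequality at $\alpha_0$ gives $\langle\delta-v,\tilde{\omega}_{\alpha_0^*}\rangle>0$, and because the support is exactly $\{\alpha_0^*\}$ this single nonvanishing rules out every admissible Levi, so $v$ is genuinely non-decomposable.
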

\begin{proof}
We use  corollary \ref{coro:vrai coro utilise}. We can suppose $G$ is adjoint and thus $\nu_b=1$.
 \\
 
First, let us notice that the condition of the statement is equivalent to
$$
\forall \a\in \Delta_0, \ \ \langle -w_0\mu^\diamond , \tilde{\omega}_\a\rangle + \{ - \langle \xi^\diamond,\tilde{\omega}_\a \rangle \} \leq 1.
$$
In fact, if $*$ is the involution of Dynkin diagram induced by $-w_0$ then this last condition is equivalent to 
$\langle \mu^\diamond ,\tilde{\omega}_{\a^*}\rangle + \{ \langle w_0\xi^\diamond ,\tilde{\omega}_{\a^*}\rangle \}\leq 1$. But $\{ \langle w_0 \xi^\diamond , \tilde{\omega}_{\a^*}\rangle \} = \{ \langle \xi^\diamond , \tilde{\omega}_{\a^*}\rangle \}$ since $w_0\xi -\xi \in \langle\Phi^\vee\rangle$.
 
Suppose thus that $\forall \a\in \Delta_0, \langle -w_0 \mu^\diamond ,\tilde{\omega}_\a \rangle + \{- \langle \xi^\diamond,\tilde{\omega}_\a\rangle \}\leq 1$ and let $v\in B(G,0,\mu^{-1})$ be non basic. If $\langle v, \a\rangle \neq 0$ then 
\begin{align*}
&
\langle -w_0 \mu^\diamond , \tilde{\omega}_{\a}\rangle + \{ - \langle \xi^\diamond ,\tilde{\omega}_\a \rangle\} \\
=& 
\underbrace{\langle -w_0\mu^\diamond  -v ,\tilde{\omega}_\a\rangle }_{\geq 0}  + \underbrace{\underbrace{\langle \xi^\diamond ,\tilde{\omega}_\a\rangle  + \{ - \langle \xi^\diamond ,\tilde{\omega}_\a\rangle  \} }_{\in \Z}  + \underbrace{\langle v - \xi^\diamond ,\tilde{\omega}_\a\rangle}_{\in \Z}}_{\geq 0\text{ thus }\in \mathbb{N}} \leq 1.
\end{align*}
Thus, if $\langle -w_0 \mu^\diamond -v , \tilde{\omega}_\a\rangle \neq 0$ then $\langle v,\tilde{\omega}_\a \rangle + \{ - \langle \xi^\diamond,\tilde{\omega}_\a\rangle \} =0$ 
and thus $\langle v,\tilde{\omega}_\a\rangle=0$
which is impossible since $v$ is dominant and $\langle v,\alpha\rangle\neq 0$. We thus have
$$
-w_0 \mu^\diamond -v  \in \langle \Phi_{0,M}^\vee \rangle_\Q
$$
where $\Delta_{0,M}= \Delta_0\setminus \{ \a\}$ for any $\a$ such that $\langle v,\alpha \rangle\neq 0$.

Reciprocally, suppose $\langle -w_0 \mu^\diamond,\tilde{\omega}_{\a_0} \rangle + \{-\langle \xi^\diamond ,\tilde{\omega}_{\a_0} \rangle \}
 >1$ for some $\a_0\in \Delta_0$. Let $v$ be such that $\langle v, \a\rangle=0$ if $\a\neq \a_0$  and 
 $$
 \langle v, \tilde{\omega}_{\a_0}\rangle =1 -   \{ - \langle \xi^\diamond ,\tilde{\omega}_{\a_0}\rangle \}.
 $$
 Since $\tilde{\omega}_{\a_0}\in \Q_{\geq 0} \Delta_0$, $v$ is dominant. Moreover, 
 $$
 \langle -w_0\mu^\diamond - v , \tilde{\omega}_{\a_0}\rangle >0
 $$
 and 
 $$
 \langle v -\xi^\diamond , \tilde{\omega}_{\a_0}\rangle = 1 - \langle \xi^\diamond ,\tilde{\omega}_\a \rangle  - \{ - \langle \xi^\diamond ,\tilde{\omega}_{\a_0}\rangle \} \in \Z.
 $$
 Thus, $v\in B ( G,0,\mu^{-1})$ and is not HN decomposable since its centralizer is the maximal Levi subgroup $M$ with $\Delta_{0,M}= \Delta_0\setminus \{\a_0\}$ and $\langle -w_0\mu^\diamond - v , \tilde{\omega}_{\a_0}\rangle \neq 0$.
 \end{proof}

\begin{remark}
An analysis of the proof of the preceding proposition shows that if $B(G,0,\nu_b\mu^{-1})$ is fully HN decomposable, then for all non basic $[b']$ in this set $\nu_b-w_0 \mu^\diamond - [\nu_{b'}]\in \langle \Phi_{0,M}^\vee \rangle_\Q$, where $M$ is the centralizer of $[\nu_{b'}]$. The same holds for $B(G,\mu)$. 
\end{remark}

Here is the corollary we will use. 
This is a key point in the proof of our main theorem.
We don't know a direct proof of this in the sense that there is \`a priori no direct relation between $B(G,\mu)$ and $B(G,0,\nu_b\mu^{-1})$ (or $B(J_b, \mu^{-1})$ ).

\begin{corollary}\label{coro:fully HN dec ssi idem}
The following are equivalent:
\begin{enumerate}
	\item  the set $B(G,\mu)$ is fully HN decomposable,
	\item the set $B(G,0,\nu_b \mu^{-1})$ is fully HN decomposable,
	\item the set $B(J_b, \mu^{-1})$ is fully HN decomposable.
\end{enumerate}
\end{corollary}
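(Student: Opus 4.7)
My plan is to establish the chain of equivalences $(1)\Leftrightarrow(2)\Leftrightarrow(3)$.

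The equivalence $(1)\Leftrightarrow(2)$ is immediate from the two minute criteria already in hand. The Minute criterion proposition stated just before Proposition~\ref{prop:minute pour B G 0 nub mu moins 1} characterizes full HN decomposability of $B(G,\mu)$ by the inequalities
\[
\langle \mu^\diamond, \tilde{\omega}_\alpha\rangle + \{\langle \xi^\diamond,\tilde{\omega}_\alpha\rangle\} \leq 1, \qquad \alpha\in\Delta_0,
\]
and Proposition~\ref{prop:minute pour B G 0 nub mu moins 1} characterizes full HN decomposability of $B(G,0,\nu_b\mu^{-1})$ by literally the same inequalities. So no further work is required for $(1)\Leftrightarrow(2)$; this is in fact the whole reason for interposing the auxiliary set $B(G,0,\nu_b\mu^{-1})$.

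For $(2)\Leftrightarrow(3)$ I would appeal to the classical Kottwitz identification $B(G)\iso B(J_b)$, valid because $[b]$ is basic and so $J_b$ is an inner form of $G$. Concretely, the map $[b']\mapsto [b'b^{-1}]$ is a bijection of pointed sets which shifts Newton points by $-\nu_b$ and Kottwitz classes by $-\kappa(b)=-\mu^\sharp$. The conditions $\kappa_G(b')=0$ and $\nu_{b'}\leq \nu_b - w_0\mu^\diamond$ defining $B(G,0,\nu_b\mu^{-1})$ thus translate into $\kappa_{J_b}(\cdot)=-\mu^\sharp=(\mu^{-1})^\sharp$ and $\nu_{\cdot}\leq -w_0\mu^\diamond=(\mu^{-1})^\diamond$, yielding a bijection $B(G,0,\nu_b\mu^{-1})\iso B(J_b,\mu^{-1})$. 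It then remains to check that this bijection respects full HN decomposability, the key point being that $G$ and $J_b$ share the same quasi-split inner form $H$, so the standard Levi subgroups of $H$ used in Definition~\ref{def:HN decomposable} are the same on both sides. For a correspondence $[b']\leftrightarrow[x]=[b'b^{-1}]$: (a) basic elements match basic elements; (b) the centralizer in $H$ of $[\nu_{b'}]$ equals that of $[\nu_{b'}-\nu_b]=[\nu_x]$ since $\nu_b$ is central; (c) the gap vector $\nu_b-w_0\mu^\diamond-\nu_{b'}$ featuring in the HN decomposability condition for $B(G,0,\nu_b\mu^{-1})$ coincides with $(\mu^{-1})^\diamond-\nu_x$, i.e.\ with the corresponding gap for $B(J_b,\mu^{-1})$. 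Hence HN decomposability transports term by term.

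The main obstacle I anticipate is the bookkeeping in the Kottwitz bijection itself: pinning down the precise compatibility with Newton and $\kappa$ (signs and normalizations) and, in the non-quasi-split case, the compatibility with the inner-twisting classes $\xi$ of $G/H$ and $\xi_{J_b}=\xi+\mu^\sharp_{ad}\in H^1(F,H_{ad})$ of $J_b/H$. Once this combinatorial dictionary between $B(G)$, $B(J_b)$, and $H$-Levi data is unambiguously fixed, the equivalences boil down to direct substitutions in $X_*(\AA)_\Q$, all legitimized by the centrality of $\nu_b$. Conceptually, one notes that going through the auxiliary set $B(G,0,\nu_b\mu^{-1})$ is essential: there is no obvious direct relation between $B(G,\mu)$ and $B(J_b,\mu^{-1})$, which is exactly what makes the statement of this corollary nontrivial and useful for the main theorem.
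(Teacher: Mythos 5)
Your proof is correct and follows the route the paper intends: the equivalence $(1)\Leftrightarrow(2)$ is exactly the juxtaposition of the Minute criterion with Proposition \ref{prop:minute pour B G 0 nub mu moins 1} (which is why that proposition is proved immediately beforehand), and $(2)\Leftrightarrow(3)$ is the Kottwitz/twin-towers bijection $B(J_b)\iso B(G)$ shifting $(\nu,\kappa)$ by $(\nu_b,\kappa(b))$, which identifies $B(J_b,\mu^{-1})$ with $B(G,0,\nu_b\mu^{-1})$ and transports the HN-decomposability data term by term since $\nu_b$ is central and both groups share the quasi-split form $H$. No gaps.
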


\section{Harder-Narasimhan stratification of the flag variety}

\label{sec:HN stratification of the flag variety}

\subsection{The twin towers principle (\cite{FalTwin}, \cite{Fal}, \cite{FTwin}) }

Let $[b]\in B(G)$ be a basic element. What we call the "twin towers principle" is the identification
$$
\Bun_{J_b} = \Bun_G,
$$
that is to say there is an equivalence of groupoids between $G$-bundles and $J_b$-bundles on the curve.
Here and in the following  we give sometimes statements that are true at the level of perfectoid $v$-stacks of bundles like in \cite{F} and \cite{FS} or objects like the diamond $\Fc (G,\mu)^\diamond$. Nevertheless the reader not familiar with those notions should not be frightened; at the end, for the proof of our main theorem, we only need the evaluation on $C$-points of those objects and he can work in this context.
\\

In fact, $J_b\times X$ is the twisted pure inner form of $G\times X$ obtained by twisting by the $G$-torsor $\E_b$,
$$
J_b \times X = \underline{\mathrm{Aut}} (\E_b)
$$
as a group over the curve.
 If $\E$ is a $G$-bundle on $X$ one associates to it the $J_b$-bundle
$$
\underline{\text{Isom}} (\E_b,\E).
$$
This is what Serre calls "torsion au moyen d'un cocyle" in sec. 5.3 of \cite{SerreCohoGal}. 
At the level of points of the preceding perfectoid v-stacks this gives the well known bijection
$$
B(J_b)\iso B(G)
$$
that sends $[1]$ to $[b]$.

\begin{example}
For $\lambda\in \Q$ the functor $\E\mapsto \underline{\Hom} ( \Ol (\lambda), \E)$ induces an equivalence
between semi-stable vector bundles of slope $\lambda$ and  vector bundles  equipped with an action of $D_\lambda^{\mathrm{op}}=\mathrm{End} (\Ol(\lambda))^{\mathrm{op}}$, where $D_\lambda$ is the division algebra with invariant $\lambda$. 
\end{example}

The identification $\Bun_{J_b}=\Bun_G$ respects modifications of a given type $\mu$ that is to say it identifies the corresponding Hecke stacks of modifications. 
Suppose $[b]\in B(G,\mu)$ is the basic element. Let $$[b'']\in B(J_b,\mu^{-1})$$ be the basic element, $[b'']\mapsto [1]$ via $B(J_b)\xrightarrow{\sim} B(G)$. One thus has $$J_{b''}=G.$$
The preceding considerations give an isomorphism of moduli spaces over $\mathrm{Spa} (\breve{E})^\diamond$:
$$
\begin{tikzcd}[column sep=small]
\text{modifications of type }\mu \text{ between }\E^G_b\text{ and } \E^G_1 \ar[d,"\sim"] \\ \text{modifications of type }\mu \text{ between } \E^{J_b}_1 \text{ and }\E^{J_b}_{b''} \ar[d,equal] \\
 \text{modifications of type }\mu^{-1} \text{ between } \E^{J_b}_{b''} \text{ and }\E^{J_b}_{1}.
\end{tikzcd}
$$
At the end this induces a $J_b(F)\times G(F)$-isomorphism of local Shimura varieties with infinite level 
$$
\M (G,\mu,b)_\infty \iso \M (J_b,\mu^{-1},b'')_\infty
$$
as pro-\'etale sheaves on $\Spa (\breve{E})$ (that are representable by diamonds). This fits into a twin towers diagram 
using the de Rham and Hodge-Tate period morphisms that allow us to collapse each tower on its base 
$$
\begin{tikzcd}[row sep=large,column sep=large]
\M (G,\mu,b)_\infty \ar[d,"\pi_{dR}", two heads]\ar[r,"\sim"]  \ar[rd,"\pi_{HT}" description]
\ar[d, dash, dotted , bend right=40, start anchor={[xshift=-12mm]}, end anchor={[xshift=-12mm]}, start anchor={[yshift=3mm]}, end anchor={[yshift=-3mm]},"G(F)"']
& \M (J_b,\mu^{-1},b'')_\infty \ar[d,"\pi_{dR}", two heads] 
\ar[ld,"\pi_{HT}" description]    \ar[d, dash, dotted, bend left=40, start anchor={[xshift=12mm]}, end anchor={[xshift=12mm]}, start anchor={[yshift=3mm]}, end anchor={[yshift=-3mm]},"J_b(F)"]
  \\
\Fc(G,\mu,b)^a & \Fc (J_b,\mu^{-1},b'')^a  
\end{tikzcd}
$$
where:
\begin{itemize}
\item $\M (G,\mu,b)_\infty$ classifies modifications of type $\mu$ between $\E_b^G$ and $\E_1^G$.
\item For such a modification its image by $\pi_{dR}$ is $x$ if $\E_1^G=\E_{b,x}^G$. Its image by $\pi_{HT}$ is $y$ if $\E_b^G=\E_{1,y}^G$. 
\item $\M (J_b,\mu^{-1},b'')_\infty$   classifies modifications of type $\mu^{-1}$ between $\E^{J_b}_{b''}$ and $\E^{J_b}_1$. 
\item For such a modification its image by $\pi_{dR}$ is $x$ if $\E_1^{J_b} = \E^{J_b}_{b'',x}$. Its image by $\pi_{HT}$ is $y$ is $\E^{J_b}_{b''} =\E_{1,y}^{J_b}$. 
\end{itemize}

We will extend this type of diagram outside the admissible locus in section \ref{sec:the HN stratification}.

\subsection{Computation of the modifications of $\E_b$}

Let $[b]\in B(G)$ be any basic element. 

 \begin{proposition}[ \cite{Ra2} A.10]\label{prop:description modification basique}
 As a subset of $B(G)$ 
 there is an equality
\[\{ \E_{b,x}|\; x\in  \Fc(G,\mu)(C)\} / \sim \ = B(G, \kappa (b)- \mu^\sharp, \nu_b\mu^{-1}). \]
\end{proposition}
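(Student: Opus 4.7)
I will prove the two inclusions separately. For $\subseteq$, fix $x\in\Fc(G,\mu)(C)$ and let $[b']\in B(G)$ be the unique class with $\E_{b,x}\cong\E_{b'}$. The Kottwitz equality $\kappa(b')=\kappa(b)-\mu^\sharp$ is immediate: the formula $c_1^G(\E_{b,x})=\mu^\sharp-\kappa(b)$ recalled in Section \ref{sec:weak ad locus} combined with the identity $c_1^G(\E_{b'})=-\kappa(b')$ from Theorem \ref{T:NK} gives the result.

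For the Newton polygon bound $[\nu_{b'}]\leq\nu_b\mu^{-1}$, I reduce to the quasi-split case using the description of $\nu_\E$ for general $G$ in Subsection \ref{sec:HN polygon in general}. By Schieder's theorem \ref{theoSchi}, $\nu_{\E_{b,x}}$ is the supremum over all standard parabolic reductions $Q$ of the vectors $v_Q:\chi\mapsto\deg\chi_*(\E_{b,x})_Q$. I apply this to reductions coming from $b$ itself: for every standard parabolic $P$ with Levi $M$ and every reduction $b_M$ of $b$ to $M$, Lemma \ref{P:redction modfication} produces a reduction $(\E_{b,x})_P$ of $\E_{b,x}$, and Lemma \ref{lemma:reduction to P modification} identifies its associated $M$-bundle with $\E_{b_M,\mathrm{pr}_w(x)}$, where $w$ indexes the Schubert cell containing $x$. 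The first Chern class of this $M$-bundle equals $(\mu^w)^\sharp-\kappa_M(b_M)\in\pi_1(M)_\Gamma$, so $\deg\chi_*(\E_{b,x})_P$ is determined for every $\chi\in X^*(P/Z_G)$. Aggregating these bounds over $P$, $M$, $b_M$ and $w$ yields a lower bound for $\nu_{\E_{b,x}}$ which, via $\nu_{\E_{b'}}=w_0(-[\nu_{b'}])$, rearranges into exactly $[\nu_{b'}]\leq\nu_b\mu^{-1}$.

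For the reverse inclusion $\supseteq$, I invoke the twin towers principle. Since $[b]$ is basic, $\Bun_G\simeq\Bun_{J_b}$ carries $\E_b^G$ to the trivial $J_b$-bundle $\E_1^{J_b}$ and matches Hecke modifications of type $\mu$ between $\E_b^G$ and some $\E_{b'}^G$ with Hecke modifications of type $\mu$ between $\E_1^{J_b}$ and the $J_b$-bundle corresponding to $[b']$ under $B(G)\iso B(J_b)$. A direct verification shows that the image of $B(G,\kappa(b)-\mu^\sharp,\nu_b\mu^{-1})$ under this bijection, after applying the involution implementing the reversal of modification direction in the twin towers diagram $\M(G,\mu,b)_\infty\simeq\M(J_b,\mu^{-1},b'')_\infty$, equals the standard Kottwitz set $B(J_b,\mu^{-1})$. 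The claim is thereby reduced to surjectivity of the Newton stratum map $y\mapsto[\E_{b'',y}^{J_b}]$ from $\Fc(J_b,\mu^{-1})(C)$ onto $B(J_b,\mu^{-1})$.

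The main obstacle is this surjectivity, a Mazur--Rapoport--Richartz-type statement. I plan to proceed by induction on the Newton polygon within $B(J_b,\mu^{-1})$: the basic element is realized by any point of the admissible locus, non-empty by Propositions \ref{P:wa non empty} and \ref{prop: lieu admissible}. For a non-basic class in $B(J_b,\mu^{-1})$, the analysis of the Kottwitz set in Section \ref{sec:HN decomposability} furnishes a strict standard Levi subgroup $M'\subset J_b$ containing the centralizer of the Newton point together with a conjugate of $\mu^{-1}$ factoring through $M'$ such that the induced class in $B(M')$ is basic; applying the basic case inside $M'$ and pushing the resulting modification out to $J_b$ via Lemma \ref{P:redction modfication} produces the required point $y$.
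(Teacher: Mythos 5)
Your computation of the Kottwitz invariant is correct, but each of the two remaining steps has a genuine gap, and in both cases the paper's actual argument goes through a different (and much shorter) mechanism, namely the equivalence $\Fc(J_b,\mu^{-1},c)^a\neq\emptyset\Leftrightarrow [c]\in A(J_b,\mu^{-1})$ supplied by Propositions \ref{prop: lieu admissible} and \ref{P:wa non empty}.

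For $\subseteq$, your Newton bound points the wrong way. Theorem \ref{theoSchi} realizes $\nu_{\E_{b,x}}$ as the supremum of the degree vectors of \emph{all} parabolic reductions of $\E_{b,x}$; evaluating only on the reductions induced by reductions of $b$ gives, as you say yourself, a \emph{lower} bound for $\nu_{\E_{b,x}}$. Since $-w_0$ preserves the dominance order and $\nu_{\E_{b,x}}=\nu_{\E_{b'}}=-w_0[\nu_{b'}]$ by Theorem \ref{T:NK}, a lower bound for $\nu_{\E_{b,x}}$ yields a lower bound for $[\nu_{b'}]$, whereas membership in $B(G,\kappa(b)-\mu^\sharp,\nu_b\mu^{-1})$ requires the upper bound $[\nu_{b'}]\leq\nu_b\mu^{-1}$, equivalently $\nu_{\E_{b,x}}\leq \mu^\diamond-\nu_b$. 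To prove an upper bound you would have to control \emph{every} reduction of $\E_{b,x}$; by Lemma \ref{P:redction modfication} these correspond to arbitrary reductions of the bundle $\E_b$, most of which do \emph{not} come from reductions of $b$ --- the paper emphasizes precisely this point, since it is the source of the discrepancy between admissibility and weak admissibility. A Mazur-type degree estimate for all such reductions is possible but is not in your proposal; the paper avoids it entirely by the twin-towers reduction to nonemptiness of a weakly admissible locus.

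For $\supseteq$, the induction step fails. Given a non-basic class in $B(J_b,\mu^{-1})$ with Newton centralizer $M'$, its reduction to $M'$ is indeed basic in $B(M')$, but your argument needs it to lie in $B(M',\mu_1^{-1})$ for some $W$-conjugate $\mu_1$ of $\mu$, i.e.\ the Kottwitz invariant must equal $(\mu_1^{-1})^\sharp$ in $\pi_1(M')_\Gamma$ and not merely in $\pi_1(J_b)_\Gamma$. That is exactly the Hodge--Newton decomposability condition of Definition \ref{def:HN decomposable} and Lemma \ref{lemma:def eq fully HN dec cas quasi deploye}, which fails in general: the cocharacter produced in the proof of Proposition \ref{prop:calcul general B ( G , epsilon, delta)} differs from $\mu$ by coroots \emph{outside} $M'$, so there is no reason for it to be conjugate to $\mu$. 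In other words, your induction works precisely in the fully HN decomposable case, which is the situation the main theorem is trying to characterize, not one you may assume. The paper instead observes that realizing $[c]\in B(J_b,\mu^{-1})$ amounts to $\Fc(J_b,\mu^{-1},c)^a\neq\emptyset$, which follows from the nonemptiness of the weakly admissible locus for $[c]\in A(J_b,\mu^{-1})$ together with the coincidence of classical points of the admissible and weakly admissible loci (Colmez--Fontaine); no induction on Newton strata is needed.
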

\begin{proof}
Let $f:B(G)\xrightarrow{\sim} B(J_b)$.
According to the twin towers principle 
$$\{\E_{b,x}\ |\ x\in \Fc (G,\mu ) (C)\}/\sim \, =f^{-1}\big ( \{\E_{1,x}\ | \ x \in \Fc ( J_b,\mu)(C)\}/\sim\big ).$$
 Now, 
$$
\{ \E_{1,x}\ |\ x\in \Fc (J_b,\mu) (C)\}/\sim\  = \{ [b']\in B(J_b)\ |\ \exists y\in \Fc (J_b,\mu^{-1}) (C),\  \E_{b',y}\simeq \E_1\}.
$$
The condition on the right hand side means $y\in \Fc (J_b,\mu^{-1},b')^a$ and $\kappa (b') =  (\mu^{-1})^{\sharp}$.
Now,  $$\Fc ( J_b,\mu^{-1},b')^{a}\neq \emptyset \Leftrightarrow [b']\in A (J_b,\mu^{-1}) $$
(see \ref{prop: lieu admissible}). Thus, 
$$
\{ \E_{1,x}\ |\ x\in \Fc (J_b,\mu) (C)\}/\sim\ = B(J_b,\mu^{-1}).
$$ 
Moreover, via the identifications $\mathcal{N}(G)=\mathcal{N}(J_b)$ and $\pi_1 (G)=\pi_1 (J_b)$, we have
\begin{align*}
\nu \circ f^{-1} &= \nu_b + \nu \\
\kappa\circ f^{-1} &= \kappa (b) + \kappa.
\end{align*}
The result follows immediately.
\end{proof}

\subsection{The Harder-Narasimhan stratification}
\label{sec:the HN stratification}

Suppose now that $[b]\in B(G,\mu)$ is the basic element.
According to proposition \ref{prop:description modification basique} there is a stratification
$$
\Fc (G,\mu ) = \coprod_{[b']\in B ( G,0,\nu_b\mu^{-1})} \Fc (G,\mu,b)^{[b']},
$$
where $ \Fc (G,\mu,b)^{[b']}$ is a locally closed generalizing subset of the adic space $\Fc(G,\mu)$ that defines a locally spatial sub diamond of $\Fc(G,\mu)^\diamond$. Here the fact that each stratum is locally closed can be deduced from Kedlaya-Liu's semi-continuity theorem of the Harder-Narasimhan polygon (\cite{KL}). The open stratum is 
$$
\Fc (G,\mu,b)^{[1]} = \Fc (G,\mu,b)^{a},
$$
the admissible locus. 
\\

One can describe each stratum in the following way. Fix $[b']\in B(G,0,\nu_b \mu^{-1})$ and let $[b'']\in B(J_b, \mu^{-1})$ be the corresponding element. We note $\widetilde{J}_{b'}=\underline{\mathrm{Aut}} (\E_{b'})$ the pro-\'etale sheaf of automorphisms of $\E_{b'}$ on $\mathrm{Perf}_{\overline{\F}_q}$ that is to say $S\mapsto \mathrm{Aut} (\E_{b' |X_S})$. One has 
$$
\widetilde{J}_{b'} =   \widetilde{J}_{b'}^0\rtimes \underline{J_{b'} (F)} 
$$
where $ \widetilde{J}_{b'}^0$ is a connected unipotent diamond that is a succesive extension of effective Banach-Colmez spaces (see \cite{F} and  \cite{FS}). This is identified with the same object for $b''$
$$
\widetilde{J}_{b''}= \widetilde{J}_{b'}.
$$
Now let 
$$
\mathcal{T}\longrightarrow \Fc (G,\mu,b)^{[b'],\diamond}
$$
be the pro-\'etale sheaf of isomorphisms between $\E_{b'}$ and $\E_{b,x}$, $x\in  \Fc (G,\mu,b)^{[b'],\diamond}$. Using again a result of Kedlaya-Liu (\cite{KL}) one can check this is a $\widetilde{J}_{b'}$-torsor. Now, using the twin towers principle, $\mathcal{T}$ is identified with 
$$
\M ( J_b, \mu^{-1},b'')_\infty
$$
and the morphism to $\Fc(G,\mu)$ with the Hodge-Tate period morphism $\pi_{HT}$.

\begin{proposition}
If $[b'']\in B(J_b,\mu^{-1})$ corresponds to $[b']\in B(G,0,\nu_b\mu^{-1})$ then 
$$
\Fc (G,\mu,b)^{[b']} = \mathrm{Im} \big ( \M ( J_{b},\mu^{-1},b'')_\infty \xrightarrow{\ \pi_{HT} \ } 
\Fc(J_b,\mu) =\Fc (G,\mu)\big ).
$$
The morphism $\pi_{HT}$ is a $\widetilde{J}_{b''}$-torsor and thus
$$
\Fc (G,\mu,b)^{[b'],\diamond} \simeq   \M ( J_{b},\mu^{-1},b'')_\infty\, /\,  \widetilde{J}_{b''}.
$$
One has 
$$
\dim \Fc ( G,\mu,b)^{[b']} = \dim \Fc (G,\mu) - \langle [\nu_{b'}], 2\rho \rangle.
$$
where the dimension of $\Fc (G,\mu,b)^{[b']}$  is the maximal lenght of a chain of specializations in the locally spectral space  (\cite{Sch1} sec. 21).
\end{proposition}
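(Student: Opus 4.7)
The plan is to realize each stratum $\Fc(G,\mu,b)^{[b']}$ as the image of the Hodge--Tate period morphism of an auxiliary infinite-level local Shimura variety attached to the inner twist $J_b$, and then to obtain its dimension by subtracting the dimension of the relevant automorphism torsor.

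First, I would translate the problem via the twin towers principle. Sending a $G$-bundle $\E$ to $\underline{\mathrm{Isom}}(\E_b,\E)$ gives the equivalence $\Bun_G \simeq \Bun_{J_b}$ that is compatible with Hecke modifications of any given type. A point $x \in \Fc(G,\mu,b)^{[b']}(C)$ is by definition a type-$\mu$ modification of $\E_b^G$ whose target $\E_{b,x}^G$ is isomorphic to $\E_{b'}^G$. Reversing the direction, this is a type-$\mu^{-1}$ modification of $\E_{b'}^G$ landing on $\E_b^G$; applying the twin towers equivalence converts it into a type-$\mu^{-1}$ modification of $\E_{b''}^{J_b}$ landing on the trivial $J_b$-bundle $\E_1^{J_b}$. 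Together with a trivialization of $\E_1^{J_b}$ this is exactly the datum parametrized by $\M(J_b,\mu^{-1},b'')_\infty$, and under $\pi_{HT}:\M(J_b,\mu^{-1},b'')_\infty \to \Fc(J_b,\mu) = \Fc(G,\mu)$ the underlying point is precisely $x$. This gives both inclusions and hence the desired equality of images.

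Second, to upgrade this set-theoretic identification to the torsor statement, I would use that over $\Fc(G,\mu,b)^{[b'],\diamond}$ the family $\{\E_{b,x}\}$ is, after applying twin towers, isomorphic fiberwise to the constant $J_b$-bundle $\E_{b''}^{J_b}$. By Kedlaya--Liu's theorem on relative classification of bundles on the relative Fargues--Fontaine curve, the pro-\'etale sheaf of isomorphisms between the constant family $\E_{b''}^{J_b}$ and $\{\E_{1,x}^{J_b}\}$ over the stratum is a pro-\'etale $\widetilde J_{b''}$-torsor. One then checks that this torsor is canonically $\pi_{HT}$, and the quotient presentation $\Fc(G,\mu,b)^{[b'],\diamond} \simeq \M(J_b,\mu^{-1},b'')_\infty / \widetilde J_{b''}$ follows formally.

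Finally, for the dimension, I would write
\[
\dim \Fc(G,\mu,b)^{[b']} \;=\; \dim \M(J_b,\mu^{-1},b'')_\infty \;-\; \dim \widetilde J_{b''}.
\]
The infinite-level space is pro-\'etale of relative dimension zero over the open $\Fc(J_b,\mu^{-1},b'')^a \subset \Fc(J_b,\mu^{-1})$, which has the same dimension as $\Fc(G,\mu)$ since $J_b$ is an inner form of $G$ and $\mu$ is minuscule (so $\langle \mu, 2\rho\rangle = \langle \mu^{-1}, 2\rho\rangle$). Only the connected unipotent part $\widetilde J_{b''}^0$ contributes to $\dim \widetilde J_{b''}$, and by reduction to the $\GL_n$-case via the adjoint representation and the dimension formula $\dim_{\mathrm{BC}} H^0(X,\Ol(\lambda)) = \lambda$ for $\lambda > 0$, this equals $\langle \nu_{\E_{b''}^{J_b}}, 2\rho\rangle$. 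Theorem \ref{T:NK} gives $\nu_{\E_{b''}^{J_b}} = w_0(-[\nu_{b''}])$, and since $w_0$ acts on $2\rho$ by $-1$ the pairing is $\langle [\nu_{b''}], 2\rho\rangle$. The twin towers translation (as used in proposition \ref{prop:description modification basique}) gives $[\nu_{b''}] = [\nu_{b'}] - [\nu_b]$, and basicness of $b$ forces $[\nu_b]$ to be central, hence $\langle [\nu_b], 2\rho\rangle = 0$. This yields $\dim \widetilde J_{b''} = \langle [\nu_{b'}], 2\rho\rangle$, as required.

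The main obstacles I foresee are making the Kedlaya--Liu torsor argument precise in the non-semistable setting (to ensure $\pi_{HT}$ is genuinely a $\widetilde J_{b''}$-torsor and not merely surjective on $C$-points), and verifying the dimension formula $\dim \widetilde J_{b''}^0 = \langle [\nu_{b''}], 2\rho\rangle$ uniformly for arbitrary reductive $G$; everything else is essentially bookkeeping with the twin towers identification.
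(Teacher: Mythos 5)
Your proposal follows essentially the same route as the paper: the twin towers identification of the modification data, the Kedlaya--Liu argument showing the sheaf of isomorphisms between $\E_{b''}$ (equivalently $\E_{b'}$) and the family of modifications is a $\widetilde{J}_{b''}$-torsor identified with $\pi_{HT}$, and the dimension count via $\dim \widetilde{J}_{b''}^0=\langle [\nu_{b''}],2\rho\rangle$ together with $[\nu_{b''}]=[\nu_{b'}]-\nu_b$ and centrality of $\nu_b$. The paper only sketches this in the surrounding discussion (and omits the dimension computation entirely), but your reconstruction is the intended argument and is correct.
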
 

We can thus again collapse the tower on two different bases  
$$
\begin{tikzcd}[row sep=large,column sep=small]
& \M (J_b,\mu^{-1},b'')_\infty \ar[ld,"\pi_{dR}"']  
\ar[dl, dash, dotted , bend right=30, start anchor={[xshift=0mm]}, end anchor={[xshift=0mm]}, start anchor={[yshift=-1mm]}, end anchor={[yshift=0mm]},"J_b(F)"']
\ar[rd,"\pi_{HT}"]
\ar[rd, dash, dotted , bend left=30, start anchor={[xshift=-1mm]}, end anchor={[xshift=0mm]}, start anchor={[yshift=0mm]}, end anchor={[yshift=0mm]},"\widetilde{J}_{b''}=\widetilde{J}_{b'}"] \\
\Fc (J_b,\mu^{-1},b'')^a & & \Fc (G,\mu,b)^{[b'],\diamond}
\end{tikzcd}
$$

%
%

\section{Proof of the main theorem}
\label{sec:proof of the main theorem}

As before, we consider a triple $(G, \{\mu\}, [b])$ with $\{\mu\}$ minuscule. In sections \ref{sec:weak ad locus} and \ref{sec:ad locus} we have introduced two open subspaces $\Fc (G,\mu, b)^{a}\subset \Fc (G,\mu, b)^{wa}$ of $\Fc (G,\mu)$. In general, the inclusion $\Fc(G,\mu, b)^{a}\subset\Fc(G,\mu, b)^{wa}$ is strict, see \cite{Har1} Example 3.6, \cite{Har} Example 6.7. In \cite{Har} section 9 and \cite{Ra2} A.20,  Hartl and Rapoport asked when do we have
\[ \Fc(G,\mu, b)^{a}=\Fc(G,\mu, b)^{wa} \ ? \ \]
 For $G=\GL_n$, Hartl gave a complete solution of this question in Theorem 9.3 of \cite{Har}. We give a complete solution to this problem for any $G$ when $[b]$ is basic.

 Here is the main theorem of this article.

\begin{theorem}\label{theo:main theorem}
If  $\mu$ minuscule and $[b]\in B(G,\mu)$ is basic then the following are equivalent
\begin{enumerate}
\item $B(G,\mu)$ is fully HN decomposable
\item $\Fc(G,\mu,b)^{a}= \Fc(G,\mu,b)^{wa}$.
\end{enumerate}
\end{theorem}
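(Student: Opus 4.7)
My plan is to prove the two implications separately, using the semistability characterization of admissibility (Definition \ref{D:admissible qs}), the numerical criterion for weak admissibility in terms of $G$-bundles given by Proposition \ref{P:wa}, and the equivalence of full HN decomposability between $B(G,\mu)$, $B(G,0,\nu_b\mu^{-1})$ and $B(J_b,\mu^{-1})$ established in Corollary \ref{coro:fully HN dec ssi idem}. After passing to the quasi-split inner form $H$, the comparison between weak admissibility and admissibility translates into the question of whether the canonical Harder--Narasimhan reduction of $\E_{b,x}$ always descends, via Lemma \ref{P:redction modfication}, to a reduction of $\E_b$ arising from a reduction of $b$ itself.

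For the implication $(1)\Rightarrow(2)$, I would fix a weakly admissible point $x\in \Fc(G,\mu,b)^{wa}(C)$ and, arguing by contradiction, assume that $\E_{b,x}$ is not semistable. Its isomorphism class is then a non-basic $[b']\in B(G,0,\nu_b\mu^{-1})$. Let $(\E_{b,x})_P$ be the canonical HN reduction to a standard parabolic $P$ with Levi $M$ equal to the centralizer of $[\nu_{b'}]$. The full HN decomposability hypothesis, combined with Corollary \ref{coro:fully HN dec ssi idem} and the equivalent formulation of Lemma \ref{lemma:def eq fully HN dec cas quasi deploye} applied to $B(J_b,\mu^{-1})$, should force the corresponding reduction of $\E_b$ to $P$ to come from a genuine reduction $b_M$ of $b$ to $M$. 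With this compatibility in hand, Proposition \ref{P:wa} yields $\deg\,\chi_*(\E_{b,x})_P\leq 0$ for every $\chi\in X^*(P/Z_G)^+$, contradicting the defining positivity of the canonical HN reduction in Theorem \ref{T:HN reduction}(2).

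For the reverse implication I would argue by contrapositive. If $B(G,\mu)$ fails to be fully HN decomposable, Corollary \ref{coro:fully HN dec ssi idem} supplies a non-HN-decomposable non-basic $[b']\in B(G,0,\nu_b\mu^{-1})$, and the aim is to produce $x\in \Fc(G,\mu,b)^{[b']}\cap \Fc(G,\mu,b)^{wa}$; any such $x$ is automatically non-admissible since $\E_{b,x}\simeq \E_{b'}$ is not trivial. I would build this point by transport through the Hodge--Tate period morphism $\pi_{HT}:\M(J_b,\mu^{-1},b'')_\infty\twoheadrightarrow \Fc(G,\mu,b)^{[b']}$ of the twin towers diagram, starting from a suitable point of the non-empty admissible locus $\Fc(J_b,\mu^{-1},b'')^a$ (Proposition \ref{prop: lieu admissible}(2)). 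Checking weak admissibility then amounts, by Proposition \ref{P:wa}, to excluding a destabilizing inequality $\deg\,\chi_*(\E_{b,x})_P>0$ for every reduction $b_M$ of $b$ to every standard Levi $M$. The non-HN-decomposability of $[b']$ should be translated, via the numerical descriptions in Corollary \ref{prop:description de B G mu cas quasideploye} and Corollary \ref{coro:vrai coro utilise} together with the maximality statement of Theorem \ref{theoSchi}, into the statement that every parabolic reduction of $\E_{b,x}$ coming from a reduction of $b$ lies strictly below the canonical HN polygon at the chosen point, so that no numerical violation arises.

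The main obstacle I foresee is in this second direction: making precise how non-HN-decomposability of $[b']$ obstructs every reduction of $b$ from producing a destabilizing inequality at the constructed $x$. This requires a careful simultaneous control of the Kottwitz invariants in $\pi_1(M)_\Gamma$ and of the Newton vectors attached to reductions of $b$, matched against the canonical HN parabolic of $\E_{b,x}$ at the chosen stratum point. I expect the explicit minute criterion of Proposition \ref{prop:minute pour B G 0 nub mu moins 1} and the characterization of $B(G,0,\nu_b\mu^{-1})$ from Corollary \ref{coro:vrai coro utilise} to be the precise inputs needed to turn the abstract non-decomposability into the concrete positivity violation required to separate $\Fc(G,\mu,b)^{wa}$ from $\Fc(G,\mu,b)^a$.
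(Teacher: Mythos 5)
Your plan follows the same route as the paper (HN stratification, Corollary \ref{coro:fully HN dec ssi idem}, Proposition \ref{P:wa}, the twin towers), but in both directions the decisive step is left at the level of ``should force'' / ``should be translated'', and these are exactly the points where the real work lies. For (1)$\Rightarrow$(2): full HN decomposability does \emph{not} by itself force the transported reduction of $\E_b$ to come from a reduction of $b$. What it gives is only an identity of Kottwitz invariants in $\pi_1(M)_\Gamma\otimes\Q$ for a standard Levi $M$ \emph{containing} (possibly strictly) the centralizer of $[\nu_{b'}]$ --- so the parabolic one must work with is in general larger than the canonical HN parabolic of $\E_{b,x}$, contrary to your setup. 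Even establishing that $\kappa_M$ of the Levi pushout of $\E_{b,P}$ equals $\nu_b$ requires pinning down the relative position $\mu_1=-w_0(-\mu)$ of the modification on the Levi, which uses the semistability of $\E_b$ via Lemma \ref{lemma:permet de montrer que mu1 et mu idem}. The essential missing ingredient is then Lemma \ref{lemma:lemme clef}: a semistable $G$-bundle equipped with a $P$-reduction whose degree vector agrees with $\nu_\E$ in $\pi_1(M_P)_\Gamma\otimes\Q$ is in fact reduced to the Levi $M_P$. Only this converts the $P$-reduction of $\E_b$ into an $M$-reduction, hence into a reduction $b_M$ of $b$ to which Proposition \ref{P:wa} applies; without it the argument does not close.

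For (2)$\Rightarrow$(1), producing the required point by ``transport through $\pi_{HT}$ from a suitable point of $\Fc(J_b,\mu^{-1},b'')^a$'' and then checking weak admissibility pointwise is not a workable plan as stated: \emph{a priori} every point of the stratum $\Fc(G,\mu,b)^{[b']}$ could carry a destabilizing reduction coming from some reduction of $b$, and nothing in the numerical descriptions selects a good point directly. The paper's mechanism is a contradiction with openness: assuming every $x$ in the stratum fails weak admissibility, Theorem \ref{theoSchi} forces the destabilizing reduction to be the canonical HN one with $Q=P$ maximal; the Kottwitz computation, using precisely that $[b']$ is \emph{not} HN decomposable (i.e.\ $\langle -w_0.\mu-\beta^\vee,\tilde{\omega}_\alpha\rangle>0$), then shows that $\pi_{dR}\bigl(\pi_{HT}^{-1}(x)\bigr)$ lands in a non-open Schubert cell for $P$ acting on $\Fc(J_b,\mu^{-1})$; hence the open set $\Fc(J_b,\mu^{-1},b'')^a$ would be contained in a profinite union of non-open Schubert cells, which is absurd. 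You correctly identify this direction as the main obstacle, but the missing idea is exactly this use of the openness of the admissible locus on the twin ($J_b$) side, rather than any direct verification at a chosen point.
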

\begin{proof}
We  first treat the case when  $G$ is quasi-split.
\\
\noindent
{\bf (1)$\boldsymbol{\Rightarrow}$(2) [quasi-split case].}\   Let $x\in\Fc (G,\mu) (C)\setminus \Fc (G,\mu,b)^{a} (C)$. We want to prove that $x\notin \Fc (G,\mu,b)^{wa} (C)$. Let $[b']\in B(G,0,\nu_b\mu^{-1})$ be such that 
$$
\E_{b,x} \simeq \E_{b'}
$$
(proposition \ref{prop:description modification basique}). Since $x$ is not admissible, and thus $b'$ non basic, according to corollary  \ref{coro:fully HN dec ssi idem} and lemma \ref{lemma:def eq fully HN dec cas quasi deploye}, there exists a strict standard Levi subgroup $M$ of $G$ containing $M_{b'}$ such that 
\begin{equation}\label{eq:bla bla bla}
\kappa_M (b'_M)\otimes 1 = \nu_b + [w_0.(-\mu) ]^\sharp\otimes 1 \in \pi_1 (M)_\Gamma\otimes \Q
\end{equation}
where $b'_M$ is the image in $M$ of the canonical reduction of $b'$ to the centralizer $M_{b'}$ of $[\nu_{b'}]$. Here we assume $\mu \in X_* (\TT)^+$ as usual and $w_0.\mu^{-1}$ is seen as a cocharater of $M$.  Let $P$ be the standard parabolic subgroup associated to $M$. By lemma \ref{P:redction modfication} the reduction $\E_{b'_P}$ of $\E_{b'}$ induces a reduction
$$
\E_{b, P}
$$
of $\E_b$ to $P$. Let $[\tilde{b}]\in B(M)$ be such that 
$$
\E_{\tilde{b}}\simeq \E_{b, P} \times_P M. 
$$
Let us note $\tilde{b}^G$ for the image of $\tilde{b}$ in $G$. We are going to prove that $[\tilde{b}^G]=[b]$ in $B(G)$.
\\ 

According to lemma \ref{lemma:reduction to P modification} there exists $\mu_1 \in W.\mu$ and $y\in \Fc ( M,\mu_1^{-1})$ such that 
$$
\E_{\tilde{b}}\simeq \E_{b'_M,y}.
$$
We can suppose $\mu_1$ is in the {\it negative} Weyl chamber associated to $M$.
In particular we have 
$$
\kappa_M (\tilde{b} ) =\kappa_M (b'_M)  + \mu_1^\sharp \in \pi_1 (M)_\Gamma. 
$$
Using equation (\ref{eq:bla bla bla}) this implies
\begin{equation}\label{eq: ega ega}
\kappa_M (\tilde{b} )\otimes 1 =   \nu_b+ [ w_0. (-\mu) ]^\sharp \otimes 1 + \mu_1^\sharp \otimes 1\in \pi_1 (M)_\Gamma\otimes \Q.
\end{equation}
One can identify 
$$
\Hom (P, \G_m)\otimes \Q = (\pi_1 (M)_\Gamma\otimes \Q)^\vee.
$$
Via this equality (\ref{eq: ega ega}) gives that for any $\chi : P/Z_G \rightarrow \G_m$, 
$$
\deg \, \chi_* \E_{b,P} = \langle w_0. (-\mu),\chi\rangle + \langle \mu_1,\chi\rangle.
$$
Since $\E_{b}$ is semi-stable one obtains
$$
\forall \chi \in X^* (P/Z_G)^{+,\Gamma}, \  \ \ \langle w_0. (-\mu),\chi\rangle + \langle \mu_1,\chi\rangle\leq 0.
$$
Lemma \ref{lemma:permet de montrer que mu1 et mu idem} that follows then shows that
$$
\mu_1=-
w_0. (-\mu). 
$$
Inserting this in equation (\ref{eq: ega ega}) we obtain 
$$
\kappa_M (\tilde{b})\otimes 1=\nu_b \in \pi_1 (M)_\Gamma\otimes \Q.
$$
We can now conclude that $[\tilde{b}^G] = [b]$ using lemma \ref{lemma:lemme clef} that follows.
\\

Now, since $M_{b'}\subset M$, if we choose $\chi \in X^*(P/Z_G)^{+,\Gamma} \cap \mathbb{N}.\Delta_0$, then 
$$
\deg \, \chi_* \E_{b'_M} >0.
$$
Hence by proposition \ref{P:wa} $x$ is not weakly admissible.
\\

\noindent
{\bf (2)$\boldsymbol{\Rightarrow}$(1) [quasi-split case].} 
According to corollary \ref{coro:fully HN dec ssi idem} $B(G,0,\nu_b \mu^{-1})$ is not fully HN decomposable. We now use the construction at the end of the proof of proposition \ref{prop:minute pour B G 0 nub mu moins 1}. Let $\a\in \Delta_0$ be such that $\langle \mu^\diamond ,\tilde{\omega}_{\a^*} \rangle >1$. Let $M$ and $P$ be the associated standard maximal Levi and parabolic subgroups, $\Delta_{0,M}= \Delta_0\setminus \{\a\}$. Let $b'_M\in B(M)_{basic}$ be such that $\kappa_M (b'_M)= (\beta^\vee)^\sharp$ with $\beta\in \Delta$ and $\beta_{|\AA}=\a$.
Let $b'$ be the image of $b'_M$ in $G$. Then $[b']\in B(G,0,\nu_b\mu^{-1})$ is not HN decomposable and the centralizer of $[\nu_{b'}]$ is $M$. Let us note 
$$
Z=\{ x\in \Fc (G,\mu) (C) \ |\ \E_{b,x}\simeq \E_{b'}\}.
$$
For $x\in Z$ suppose $\E_{b,x}$ is not weakly admissible. Then there exists a standard maximal parabolic subgroup $Q$, a reduction $b_{M_Q}$ of $b$ to $M_Q$ and $\chi \in X^* (Q/Z_G)^+$ such that 
$$
\deg \, \chi_* (\E_{b,x})_Q >0.
$$
According to theorem \ref{theoSchi} the vector 
\begin{align*}
v: X^* (P/Z_G)&\longrightarrow \Z \\
\chi &\longmapsto \deg\, \chi_*  (\E_{b,x})_Q 
\end{align*}
seen as an element of $\mathcal{N}(G)$ satisfies $v\leq \nu_{\E_{b,x}}$. One deduces that $Q=P$ and  $(\E_{b,x})_Q$ is the Harder-Narasimhan canonical reduction of $\E_{b,x}$. 
Let $\mu_1\in W.\mu$ be such that 
$$
\E_{b'_M} \simeq (\E_{b,x} )_P\times_P M \simeq \E_{b_M,y}
$$
with $y\in \Fc ( M,\mu_1)$. One then has
$$
\kappa_M ( b'_M) = \kappa_M(b_M) - \mu_1^\sharp \in \pi_1 (M)_\Gamma.
$$
Pushing forward this equality in $\pi_1 (M)_\Gamma\otimes \Q= X_*(Z_M)_\Q^\Gamma$ one obtains 
$$
[\nu_{b'}] = \nu_b - \mu_1^\sharp \otimes 1.
$$
This gives 
$$
 \mu_1^\sharp \otimes 1 = \nu_b - (\beta^\vee)^\sharp\otimes 1
\in \pi_1 (M)_\Gamma\otimes \Q.
$$
We now use the diagram at the end of section \ref{sec:the HN stratification}. 
Let $[b'']\in B (J_b)$ corresponding to $[b'] \in B(G)$. 
Let us look at 
$$
\begin{tikzcd}[column sep=1mm]
 & \M ( J_b, \mu^{-1}, b'')_\infty (C) \ar[ld, two heads, "\pi_{dR}"'] \ar[rd, two heads , "\pi_{HT}"] \\
 \Fc (J_b, \mu^{-1},b'')^a (C) & & \Fc (G,\mu,b)^{[b']} (C)=Z
\end{tikzcd}
$$
The twin towers principle extends to $P$ and $M$-torsors. More precisely, the reductions $b_M$ and $b_P$ define a Levi and parabolic  subgroup of $J_b$ i.e. $M$ and $P$ transfer to the inner form $J_b$. We still denote them $M$ and $P$.
Moreover $b''$ admits a reduction $b''_M$ to $M$. 
 Then 
$$
\pi_{dR} \big  ( \pi_{HT}^{-1} (x) \big )
$$
lies in the locus of points $z\in  \Fc (J_b, \mu^{-1},b'')^a (C)$ where $(\E_{b'',z})_P\times_P M \simeq \E_{b''_M,s}\simeq \E_1$ with $s\in \Fc ( M,\mu_1^{-1})$. The open Schubert cell in $\Fc ( J_b,\mu^{-1})$ with respect to the action of $P$ is $P w_0 P_{\mu^{-1}}/P_{\mu^{-1}}$. The point $z$ is thus in this Schubert cell if and only if 
$$
-\mu_1\otimes 1 \equiv - w_0.\mu \otimes 1 \in \pi_1 (M) \otimes \Q.
$$
Projected to $\pi_1(M)_\Gamma\otimes \Q$ this is equivalent to 
$$
w_0.\mu \otimes 1 +  \beta^\vee \otimes 1  \equiv \nu_b.
$$
But this is impossible since 
$$
\langle -w_0.\mu -\beta^\vee, \tilde{\omega}_\a\rangle >0
$$
and is thus non zero.
\\

From this analysis we deduce that if $Z\subset \Fc ( G,\mu) \setminus \Fc (G,\mu,b)^{wa}$ then 
$$
\mathrm{Im} ( \pi_{dR}) =
\Fc ( J_b,\mu^{-1},b'')^a
$$
is contained in a profinite (index by $J_b (F)/P(F)$) union of non-open Schubert cells in $\Fc ( J_b,\mu^{-1})$. This is in contradiction with the openness of the admissible locus.
\\

Let us now explain how to treat the case of a general $G$ non necessarily quasi-split. One can suppose $G$ is adjoint. In fact, $\Fc ( G,\mu,b)^{wa}=\Fc ( G_{ad},\mu_{ad},b_{ad})^{wa}$ and 
$\Fc ( G,\mu,b)^{a}=\Fc ( G_{ad},\mu_{ad},b_{ad})^{a}$. Since $H$ is adjoint, $H^1(F,H)= B(H)_{basic}$ and $G$ is an extended pure inner form of $H$, $G=J_{b^*}$ with $[b^*]\in B(H)$ basic. Via $B(G)\xrightarrow{\sim} B(H)$ let $[b]\mapsto [b^H]$. Then 
\begin{align*}
\Fc ( G,\mu,b)^{wa} &= \Fc ( H,\mu, b^H)^{wa}, \\
\Fc ( G,\mu,b)^a & = \Fc (H,\mu, b^H)^a.
\end{align*}
Here $[b^H]\in B ( H, \mu^\sharp + \kappa (b^*) , \mu^{\diamond})$ is the basic element.  Via $B(G)\xrightarrow{\sim} B(H)$ the set $B(G,0,w_0\mu^{-1, \diamond})$ is sent to $B(H,\kappa (b^*),w_0\mu^{-1,\diamond})$. We deduce from corollary \ref{coro:fully HN dec ssi idem} that $B(G,\mu)$ is fully HN decomposable if and only if $B(G,0,w_0\mu^{-1,\diamond})\xrightarrow{\sim} B(H,\kappa (b^*),w_0\mu^{-1,\diamond})$ is. Then one checks that all the preceding arguments are valid in this context, working with $B ( H, \mu^\sharp + \kappa (b^*) , \mu^{\diamond})$ and  $B(H,\kappa (b^*),w_0\mu^{-1,\diamond})$ instead of $B(H,\mu)$ and $B(H,0,w_0\mu^{-1,\diamond})$.
\end{proof}

In the following lemmas $G$ is quasi-split.

\begin{lemma}\label{lemma:permet de montrer que mu1 et mu idem}
Consider $\mu \in X_*(\TT)^+$, $M$ a standard Levi subgroup of $G$ with associated standard parabolic subgroup $P$ and $\mu_1\in W.\mu$ that is in the positive Weyl chamber associated to $M$. If for all $\chi\in X^* (P/Z_G)^{+,\Gamma}$ one has $\langle \mu - \mu_1, \chi\rangle \leq 0$ then $\mu_1=\mu$.
\end{lemma}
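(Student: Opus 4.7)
The plan is to squeeze $v := \mu - \mu_1$ into the small cone $\Q_{\geq 0}\Phi_M^{\vee,+}$ using the hypothesis, and then use a $W$-invariant inner product argument to force $v = 0$.

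First, since $\mu \in X_*(\TT)^+$ and $\mu_1 \in W\mu$, the standard orbit-under-a-dominant-weight fact gives $v = \sum_{\beta \in \Delta} c_\beta \beta^\vee$ with every $c_\beta \geq 0$. Next I would identify the hypothesis cone: $X^*(P/Z_G)^+$ is the $\Q_{\geq 0}$-span of the fundamental weights $\omega_\beta$ with $\beta \in \Delta \setminus \Delta_M$, so its $\Gamma$-invariant subcone $X^*(P/Z_G)^{+,\Gamma}$ is generated by the $\Gamma$-orbit sums $\tilde{\omega}_\alpha$ for $\alpha \in \Delta_0 \setminus \Delta_{0,M}$. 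Using $\langle \omega_\beta, \gamma^\vee\rangle = \delta_{\beta,\gamma}$, the hypothesis applied to $\chi = \tilde{\omega}_\alpha$ reads
\[
\sum_{\substack{\beta \in \Delta \\ \beta|_{\AA} = \alpha}} c_\beta \;=\; \langle \tilde{\omega}_\alpha, v \rangle \;\leq\; 0,
\]
which, combined with $c_\beta \geq 0$, forces $c_\beta = 0$ whenever $\beta \in \Delta \setminus \Delta_M$. Hence $v \in \Q_{\geq 0}\Phi_M^{\vee,+}$.

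Then I would fix a $W$-invariant inner product $(\cdot,\cdot)$ on $X_*(\TT)_\Q$. Because $\mu$ and $\mu_1 = \mu - v$ are $W$-conjugate, $\|\mu\|^2 = \|\mu_1\|^2$, which expands to $2(\mu_1,v) + \|v\|^2 = 0$; in particular $(\mu_1,v) \leq 0$. Conversely, for each root $\beta$ the reflection $s_\beta$ is an $(\cdot,\cdot)$-orthogonal reflection fixing the hyperplane $\ker\langle\beta,\cdot\rangle$, so $(\cdot,\beta^\vee)$ is a positive scalar multiple of $\langle\beta,\cdot\rangle$. The $M$-dominance of $\mu_1$ therefore yields $(\mu_1,\beta^\vee) \geq 0$ for all $\beta \in \Phi_M^+$, and since $v$ is a non-negative $\Q$-combination of such $\beta^\vee$, we deduce $(\mu_1,v) \geq 0$. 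The two bounds force $(\mu_1,v) = 0$ and hence $\|v\|^2 = 0$, i.e.\ $v = 0$ and $\mu_1 = \mu$.

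The only spot where any real bookkeeping is needed is the identification of $X^*(P/Z_G)^{+,\Gamma}$ with the cone generated by the $\tilde{\omega}_\alpha$ and the accompanying pairing computation; once $v$ is known to lie in $\Q_{\geq 0}\Phi_M^{\vee,+}$, the $W$-invariant norm argument is purely formal, so I do not expect a genuine obstacle in the remainder.
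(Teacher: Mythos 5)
Your proof is correct and follows essentially the same route as the paper's: the pairing against the $\Gamma$-invariant generators of $X^*(P/Z_G)^{+}$ (together with $\mu-\mu_1\in\Q_{\geq 0}\Delta^\vee$, which the paper leaves implicit) forces $\mu-\mu_1\in\Q_{\geq 0}\Phi_M^{\vee,+}$, and the $M$-dominance of $\mu$ and $\mu_1$ then kills this difference. The paper merely asserts these two steps without the $W$-invariant-norm computation, which you correctly supply.
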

\begin{proof} The condition $\langle \mu - \mu_1, \chi\rangle \leq 0$ for all $\chi\in X^* (P/Z_G)^{+,\Gamma}$ implies that $\mu-\mu_1$ is a linear combination of positive simple coroots in $M$ with positive coefficients. The result follows since $\mu_1$ and $\mu$ are both $M$-dominant.
\end{proof}

\begin{lemma}\label{lemma:lemme clef}
If $\E$ be a semi-stable $G$-bundle on $X$ equipped with a reduction $\E_P$ to the standard parabolic subgroup $P$ with standard Levi subgroup  $M_P$ such that $c_1^{M_P} ( \E_P\times_P M_P)\equiv \nu_{\E}$ in $\pi_1 (M_P)_\Gamma\otimes \Q$, then $\E_{P}\times_P M_P$ is in fact a reduction of $\E$ to $M_P$. 
\end{lemma}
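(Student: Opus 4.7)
The plan is to show that $\E_M := \E_P \times_P M_P$ is itself a semistable $M_P$-bundle, and then apply the Kottwitz classification of basic bundles to conclude $\E \cong \E_M \times^{M_P} G$, which gives the desired $M_P$-reduction of $\E$. The hypothesis $c_1^{M_P}(\E_M)\equiv \nu_\E$ in $\pi_1(M_P)_\Gamma\otimes\Q$ will be used both to constrain the central component of the Newton polygon $\nu_{\E_M}$ and to match first Chern classes over $G$.

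The crucial step is the semistability of $\E_M$. I will argue by contradiction: if $\E_M$ admits a non-trivial HN reduction to some strict standard parabolic $Q'\subsetneq M_P$, then by Theorem \ref{theoSchi} applied inside $M_P$ the associated vector in $X_*(\AA)_\Q$ equals $\nu_{\E_M}$. The preimage $Q'':= Q'\cdot U_P\subset P\subset G$ is a standard parabolic of $G$ with the same Levi $M_{Q''}=M_{Q'}$, and the identification $\E_P/Q'' = (\E_P/U_P)/Q' = \E_M/Q'$ lifts $\E_{Q'}$ canonically to a reduction $\E_{Q''}$ of $\E$ to $Q''$. Because every $\chi \in X^*(M_{Q''})^\Gamma=X^*(M_{Q'})^\Gamma$ factors through the common Levi quotient $Q''\twoheadrightarrow M_{Q'}$, a direct computation of pushforwards gives $\chi_*\E_{Q''}=\chi_*\E_{Q'}$, so the vector in $X_*(\AA)_\Q$ attached to $\E_{Q''}$ equals $\nu_{\E_M}$. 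Theorem \ref{theoSchi} applied to the semistable $G$-bundle $\E$ now yields $\nu_{\E_M}\leq\nu_\E$. Conversely, the hypothesis forces the $Z_{M_P}^\circ$-component of $\nu_{\E_M}$ to be $\nu_\E$, so $\nu_{\E_M}-\nu_\E\in\langle\Phi_M^\vee\rangle_\Q^\Gamma$ and is $M_P$-dominant; since the fundamental $M_P$-coweights are non-negative rational combinations of the simple coroots of $M_P$ (the inverse of a finite-type Cartan matrix has non-negative entries), one obtains $\nu_{\E_M}-\nu_\E\in \Q_{\geq 0}(\Phi_{M,0}^\vee)^+\subset\Q_{\geq 0}(\Phi_0^\vee)^+$, i.e.\ $\nu_{\E_M}\geq\nu_\E$. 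Taken together, both inequalities place $\nu_\E-\nu_{\E_M}$ in the intersection of $\Q_{\geq 0}(\Phi_0^\vee)^+$ with its negative, which is $\{0\}$ by strict convexity of the positive coroot cone; hence $\nu_{\E_M}=\nu_\E$, contradicting the non-triviality of the HN reduction. Therefore $\E_M$ is semistable.

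The conclusion then follows from the Kottwitz classification. Since $\nu_{\E_M}=\nu_\E$ is central already in $G$, the basic class $[b_M]\in B(M_P)_{basic}$ corresponding to $\E_M$ is also basic in $B(G)$, and $\E' := \E_M\times^{M_P} G$ is a semistable $G$-bundle. Using $\pi_1(P)=\pi_1(M_P)$ (because $U_P$ is unipotent connected) and functoriality of $c_1$, both $c_1^G(\E)=c_1^G(\E_P\times^P G)$ and $c_1^G(\E')$ are equal to the image of $c_1^{M_P}(\E_M)$ under $\pi_1(M_P)_\Gamma\to\pi_1(G)_\Gamma$, so they coincide in $\pi_1(G)_\Gamma$ (not merely rationally). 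By Theorem \ref{T: G-bundles} together with the Kottwitz bijection $B(G)_{basic}\xrightarrow{\sim}\pi_1(G)_\Gamma$, the two semistable bundles are isomorphic, so $\E_M$ is indeed a reduction of $\E$ to $M_P$. The main obstacle is the construction of the lifted reduction $\E_{Q''}$ and the verification of the compatibility $v(\E_{Q''})=\nu_{\E_M}$; once this is established, combining Theorem \ref{theoSchi} with the $M_P$-dominance argument cleanly forces equality of Newton polygons and closes the proof.
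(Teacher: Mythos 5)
Your proof is correct, and its overall architecture coincides with the paper's: show that $\E_M=\E_P\times_P M_P$ is semi-stable, then conclude via the injectivity of $(c_1^G,\nu)$ on isomorphism classes of $G$-bundles (equivalently, the Kottwitz bijection on basic classes). The preliminary step --- lifting a reduction of $\E_M$ to a standard parabolic $Q'$ of $M_P$ to a reduction of $\E$ to the parabolic $Q''=Q'U_P\subset P$ of $G$, with matching pushforward degrees --- is exactly the ``generalities about reductions to parabolic subgroups'' with which the paper's proof opens. Where you genuinely diverge is in how semi-stability of $\E_M$ is extracted from this lifting. The paper argues character by character: for $\chi\in X^*(M_P\cap Q/Z_{M_P})^+$ it writes $\chi=\chi_1+\chi_2$ with $\chi_1\in X^*(Q/Z_G)^+$ and $\chi_2\in X^*(P/Z_G)$, bounds $\deg\,\chi_{1*}\E_Q\leq 0$ by semi-stability of $\E$, and computes $\deg\,\chi_{2*}\E_P=\langle\nu_\E,\chi_2\rangle=0$ from the hypothesis. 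You instead apply theorem \ref{theoSchi} to the lifted Harder-Narasimhan reduction to obtain $\nu_{\E_M}\leq\nu_\E$, and play this against the reverse inequality $\nu_{\E_M}\geq\nu_\E$, which you deduce from the hypothesis (it pins the $X_*(Z_{M_P}^\circ)_\Q$-component of $\nu_{\E_M}$ to $\nu_\E$) together with the $M_P$-dominance of $\nu_{\E_M}-\nu_\E$ and the non-negativity of the inverse Cartan matrix; the strict convexity of the positive coroot cone then forces $\nu_{\E_M}=\nu_\E$, contradicting non-triviality of the canonical reduction. Both routes are sound; yours trades the elementary character decomposition for Schieder's comparison theorem plus a standard root-system fact, which makes the Newton-point bookkeeping more conceptual at the cost of the (correct, and worth spelling out) verification that an $M_P$-dominant element of $\langle\Phi_{0,M}^\vee\rangle_\Q$ lies in $\Q_{\geq 0}(\Phi_0^\vee)^+$. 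The concluding step, including the equality $c_1^G(\E)=c_1^G(\E_M\times_{M_P}G)$ in $\pi_1(G)_\Gamma$ via functoriality through $\pi_1(P)=\pi_1(M_P)$, matches the paper's.
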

\begin{proof}
Let us begin with some generalities about reductions to parabolic subgroups.  
Let us forget momentarily the hypothesis of the statement. 
Let $\E$ be a $G$-torsor on $X$.
Let $Q$ be a standard parabolic subgroup of $G$ such that $Q\subset P$ with associated standard Levi subgroup  $M_Q$. Then $Q\cap M_P$ is a standard parabolic subgroup of $M_P$ whose standard Levi subgroup is $M_Q$ and all of them are of this type. 
The morphism
$$
Q\backslash \E \longrightarrow P\backslash \E
$$
is a locally trivial fibration with fiber $Q\backslash P=M_P\cap Q\backslash M_P$. If $\E_P$ is a reduction of $\E$ to $P$ corresponding to the section $s$ of $P\backslash \E\rightarrow X$ then 
the pullback by $s$ of the preceding fibration is 
$$
M_P\cap Q \backslash (\E_P\times_P M_P) \longrightarrow X.
$$
As a consequence there is a bijection between 
\begin{itemize}
\item reductions $\E_Q$ of $\E$ to $Q$
\item reductions $\E_P$ of $\E$ to $P$ together with a reduction $(\E_P\times_P M_P)_{M_P\cap Q}$ of $\E_P\times_P M_P$  to $M_P\cap Q$.
\end{itemize}
Let us come back to our statement. We first prove that $\E_P\times_P M_P$ is semi-stable. Thus, let $Q\subset P$ be as before and $(\E_P\times_P M_P)_{M_P\cap Q}$ be a reduction corresponding to the reduction $\E_Q$. One has 
$$
X^* (Q) \iso X^* (M_P\cap Q) \iso X^* (M_Q).
$$
For $\chi \in X^*(Q)$ one has 
$$
\chi_* \E_Q =  \chi_{| M_P\cap Q * } (\E_P\times_P M_P)_{M_P\cap Q}.
$$
Now, suppose $\chi_{|M_P\cap Q} \in X^* ( M_P\cap Q/Z_{M_P})^+$. Then one can write 
$\chi=\chi_1+\chi_2$ with 
$$
\chi_1 \in X^* (Q/Z_G)^+\ \text{ and }\ \chi_2 \in X^*(P/Z_{G}).
$$ 
Then,
$$
\deg\, \chi_{| M_P\cap Q * } (\E_P\times_P M_P)_{M_P\cap Q} = \underbrace{\deg\, \chi_{1*} \E_Q}_{\leq 0\text{ by s.s. of }\E} + \underbrace{\deg\, \chi_{2*} \E_P}_{\langle \nu_\E ,\chi\rangle=0} \leq 0.
$$
Thus, $\E_P\times_P M_P$ is semi-stable with slope $\nu_\E$. Now, as
$$
c_1^G \big (  (\E_P\times_P M_P) \times_{M_P} G\big ) = c_1^G (\E),
$$
one concludes that $ (\E_P\times_P M_P) \times_{M_P} G\simeq \E$ using the injectivity of
$$
(c_1^G,\nu): H^1_{\textrm{\'et}} (X,G)\longrightarrow \pi_1(G)_\Gamma \times \mathcal{N}(G).
$$
\end{proof}

\begin{remark}
For $\GL_n$ the preceding lemma says that if $\E$ is  a semi-stable vector bundle equipped with a  finite filtration $(\Fil^i \E)_{i\in \Z}$ whose graded pieces  satisfy  $\forall i \ \mu ( \mathrm{Gr}^i \E)=\mu (\E)$, then $\E \simeq \bigoplus_{i\in \Z} \mathrm{Gr}^i \E$. In fact, the category of slope $\mu (\E)$ semi-stable vector bundles is abelian. From this one deduce by induction on $i$ that the $\Fil^i \E$ and the $\mathrm{Gr}^i \E$, $i\in \Z$, are semi-stable of slope $\mu (\E)$. On concludes using that if $\Fc_1$ and $\Fc_2$ are semi-stable with $\mu ( \Fc_1)=\mu (\Fc_2)$ then $\mathrm{Ext}^1 (\Fc_1,\Fc_2)=0$.
\end{remark}

\section{Asymptotic geometry of the admissible locus}
\label{sec:asymptotic}

In this section we sketch some ideas about fully HN decomposable period spaces. 
We suppose $\mu$ minuscule, $[b]\in B(G,\mu)$ is the basic element and $B(G,\mu)$ is fully HN decomposable.
Let us set 
$$
\partial \Fc (G,\mu,b)^a = \Fc (G,\mu,b) \setminus \Fc (G,\mu,b)^a.
$$
One has the stratification
$$
\partial \Fc (G,\mu,b)^a = \coprod_{[b']\in B( G,0,\nu_b \mu^{-1})\setminus \{[1]\}} \Fc ( G,\mu,b )^{[b']}.
$$
Suppose $G$ is quasi-split.
There is then a bijection 
\begin{align*}
&\{
 \text{parabolic subgroups of } G \text{ which admit a reduction of } b\}/\sim \\
 \iso & \{\text{parabolic subgroups of }J_b\}.
\end{align*}
A parabolic subgroup of $G$ transfer to the inner form $J_b$ if and only if $b$ has a reduction to this parabolic subgroup. Now for each $x\in \partial \Fc (G,\mu,b)^a (C)$ the canonical reduction $(\E_{b,x})_P$ 
defines a reduction $(\E_b)_P$. According to the first part of the proof of theorem \ref{theo:main theorem} this corresponds to a reduction of $b_P$, $(\E_b)_P=\E_{b_P}$. 

The same type of analysis can be lead if $G$ is non-quasi-split, writing $G_{ad} = J_{b^*}$ with $[b^*]\in B (H_{ad})_{basic}$ as at the end of the proof of theorem \ref{theo:main theorem}. 
At the end one obtains the following result.

\begin{proposition}
Let $\mathcal{P}$ be a set of representatives of the conjugacy classes of proper parabolic subgroups of $J_b$. 
There is a $J_b(F)$-invariant stratification by locally closed generalizing subsets 
$$
\partial \Fc (G,\mu,b)^a = \bigcup_{P\in \mathcal{P}} \partial_P \Fc (G,\mu,b)^a 
$$
together with a $J_b(F)$-equivariant 
continuous map $\partial_P \Fc (G,\mu,b)^a \rightarrow J_b (F)/P(F)$. 
\end{proposition}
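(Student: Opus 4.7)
The plan is to refine the stratification $\partial \Fc(G,\mu,b)^a = \coprod_{[b'] \neq [1]} \Fc(G,\mu,b)^{[b']}$ of section \ref{sec:the HN stratification} by grouping the strata according to the $J_b(F)$-conjugacy class of an $F$-rational parabolic of $J_b$ attached canonically to each point. Concretely, for $x \in \partial \Fc(G,\mu,b)^a(C)$ with $\E_{b,x} \simeq \E_{b'}$, I would let $Q$ be the canonical HN parabolic of $\E_{b,x}$ (whose standard Levi is the centralizer $M_{b'}$ of $[\nu_{b'}]$). By lemma \ref{P:redction modfication}, the canonical reduction $(\E_{b,x})_Q$ induces a reduction $(\E_b)_Q$ of $\E_b$ to $Q$. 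Using the fully HN decomposable hypothesis and exactly the argument in the $(1)\Rightarrow(2)$ direction of theorem \ref{theo:main theorem}, this reduction of $\E_b$ comes from an honest reduction $b_Q$ of $b \in G(\Fb)$ to $Q$; equivalently, $b$ has a reduction to the Levi of $Q$. Invoking the bijection recalled at the start of the section, the parabolic $Q$ of $H$ then transfers to an $F$-rational parabolic $Q_x$ of $J_b$, well-defined up to the equivalence of the reduction $b_Q$. The non-quasi-split case is handled exactly as at the end of the proof of theorem \ref{theo:main theorem}, by writing $G_{ad}=J_{b^*}$.

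For each $P\in \mathcal{P}$, I would then set
$$
\partial_P \Fc(G,\mu,b)^a := \{\, x \in \partial \Fc(G,\mu,b)^a \ |\ Q_x \text{ is } J_b(F)\text{-conjugate to } P \,\}.
$$
Each such stratum is a union of those $\Fc(G,\mu,b)^{[b']}$ for which the associated conjugacy class of Levi transfers to the Levi of $P$, so it is locally closed generalizing by the Kedlaya--Liu semi-continuity used in the $[b']$-stratification. The action of $g \in J_b(F) = \underline{\mathrm{Aut}}(\E_b)(F)$ transports the canonical HN reduction of $\E_{b,x}$ to that of $\E_{b,gx}$ by conjugation, hence $Q_{gx} = g Q_x g^{-1}$; in particular each $\partial_P$ is $J_b(F)$-invariant. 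The map to $J_b(F)/P(F)$ is then defined using that $P$ is self-normalizing in $J_b$: the map $g \mapsto gPg^{-1}$ identifies $J_b(F)/P(F)$ with the set of $F$-rational parabolics of $J_b$ in the conjugacy class of $P$, and I would send $x$ to the coset corresponding to $Q_x$. Equivariance is immediate from $Q_{gx}=gQ_xg^{-1}$.

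The main obstacle is checking continuity of the map to $J_b(F)/P(F)$ (which inherits the topology of the $F$-points of the flag variety $J_b/P$). I would establish this by working stratum by stratum: on each finer piece $\Fc(G,\mu,b)^{[b']}$, the HN polygon of $\E_{b,x}$ is constant, so the description of section \ref{sec:the HN stratification} realizes this piece as the image under $\pi_{HT}$ of the corresponding $\M(J_b,\mu^{-1},b'')_\infty$, modulo $\widetilde{J}_{b''}$. Under this identification the assignment $x \mapsto Q_x$ factors through the reduction of $\E_{b''}$ encoded in the Hodge--Tate period datum, which is manifestly analytic. Matching the analytic structures on different $[b']$-strata within the same $\partial_P$ then yields the global continuity, giving the claimed $J_b(F)$-equivariant continuous map.
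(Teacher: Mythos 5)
Your proposal follows essentially the same route as the paper's sketch: take the canonical Harder--Narasimhan reduction of $\E_{b,x}$, transfer it to a reduction of $\E_b$ via lemma \ref{P:redction modfication}, use the full HN decomposability together with the argument from the first part of the proof of theorem \ref{theo:main theorem} (and lemma \ref{lemma:lemme clef}) to see that it comes from a genuine reduction of $b$, and hence from an $F$-rational parabolic of $J_b$, the map to $J_b(F)/P(F)$ being given by self-normalization of parabolics. The extra details you supply on equivariance and on continuity via the $\pi_{HT}$-description of the strata are consistent with what the paper leaves implicit.
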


Thus, {\it $\Fc (G,\mu,b)^a$ shares a lot of similarities with hermitian symmetric spaces}: its boundary is parabolically induced. Another way to see this is via the {\it generalized Boyer's trick} aka Hodge-Newton decomposition (\cite{Boyer} for the original trick, \cite{HT} for its Shimura varieties variant, \cite{Mantovan1}, \cite{Mantovan2},  \cite{Sh2} for generalizations in the PEL case, \cite{GoHeNi} for the special fiber  in general, \cite{Hansen} and \cite{GaisinImai} for ``modern" versions in the context of local Shtuka moduli spaces). 
In fact for $[b']\in B(G,0,\nu_b\mu^{-1})\setminus \{[1]\}$ we have the $\tilde{J}_{b''}$-torsor (see sec. \ref{sec:the HN stratification})
$$
\M (J_b,\mu^{-1},b'')_\infty \xrightarrow{\ \pi_{HT}\ } \Fc (G,\mu,b)^{[b']}.
$$
Now $[b'']\in B(J_b,\mu^{-1})$ is HN decomposable (cf. corollary \ref{coro:fully HN dec ssi idem}) and this local Shtuka moduli space is parabolically induced.
\\

Let us now state the following conjecture about the existence of analogs of {\it Siegel domains} in hermitian symmetric spaces.

\begin{conjecture}\label{conj:ad egal was ssi domaine fondamental compact}
For $[b]\in B(G,\mu)$ basic with $\mu$ minuscule the following are equivalent:
\begin{enumerate}
\item $\Fc ( G,\mu, b)^a=\Fc (G,\mu,b)^{wa}$.
\item The exists a quasi-compact open subset $U\subset \Fc ( G,\mu,b)^a$ such that 
$ J_b (F).U =
\Fc (G,\mu,b)^a
$.
\end{enumerate}
\end{conjecture}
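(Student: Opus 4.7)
The approach is to leverage Theorem~\ref{theo:main theorem} to replace (1) by the equivalent combinatorial condition that $B(G,\mu)$, and hence by Corollary~\ref{coro:fully HN dec ssi idem} also $B(J_b,\mu^{-1})$, is fully HN decomposable, and then to interpret this geometrically via the HN stratification of $\partial\Fc(G,\mu,b)^a$ from Section~\ref{sec:the HN stratification}. The asymptotic picture already developed in this section is the right framework: $\partial\Fc(G,\mu,b)^a$ decomposes into strata $\partial_P\Fc(G,\mu,b)^a$, each fibering $J_b(F)$-equivariantly over $J_b(F)/P(F)$. A quasicompact fundamental domain should then be built in Siegel-domain style: a quasicompact core together with one collar neighborhood per conjugacy class of proper parabolic $P\subset J_b$.

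For (1)$\Rightarrow$(2), I would fix a representative $P$ of each conjugacy class of proper parabolic subgroups of $J_b$ for which $\partial_P\Fc(G,\mu,b)^a$ is nonempty. By the twin towers identification $\Fc(G,\mu,b)^{[b']}\simeq \M(J_b,\mu^{-1},b'')_\infty/\widetilde{J}_{b''}$ and Corollary~\ref{coro:fully HN dec ssi idem}, the element $[b'']\in B(J_b,\mu^{-1})$ corresponding to the stratum is itself HN decomposable through a Levi of $P$, so the local Shimura variety $\M(J_b,\mu^{-1},b'')_\infty$ is parabolically induced from data attached to this Levi. Induction on the semisimple rank would then furnish a fundamental domain at the Levi level, which one pushes down through $\widetilde{J}_{b''}$ and thickens into a quasicompact collar $U_P\subset \Fc(G,\mu,b)^a$ whose $P(F)$-translates cover a neighborhood of $\partial_P\Fc(G,\mu,b)^a$. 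Taking the union of a fixed quasicompact open set with the various $U_P$ should give the desired $U$ satisfying $J_b(F)\cdot U=\Fc(G,\mu,b)^a$.

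For (2)$\Rightarrow$(1), I would argue by contrapositive. Suppose $B(G,\mu)$ is not fully HN decomposable; by Corollary~\ref{coro:fully HN dec ssi idem} there is a non-basic $[b']\in B(G,0,\nu_b\mu^{-1})$ whose Newton centralizer $M_{b'}$ is not contained in any strict Levi through which $b'$ factors compatibly. The stratum $\Fc(G,\mu,b)^{[b']}$ therefore does not admit a $J_b(F)$-equivariant continuous map to any discrete quotient $J_b(F)/Q(F)$ with $Q\subsetneq J_b$, unlike the HN-decomposable strata. One would then produce a sequence of $C$-valued points approaching a generic point of this stratum whose $J_b(F)$-orbit closures fail to be relatively compact in $\Fc(G,\mu,b)^a$, contradicting the existence of a quasicompact $U$ with $J_b(F)\cdot U=\Fc(G,\mu,b)^a$. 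The concrete input should be the presentation of $\Fc(G,\mu,b)^{[b']}$ as a $\widetilde{J}_{b'}$-orbit through $\pi_{HT}$ together with a non-cocompactness statement for the $J_b(F)$-action on a transverse slice.

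The hardest step is almost certainly the quantitative one: translating the HN stratification into an honest distance-to-boundary function on $\Fc(G,\mu,b)^a$ that is $J_b(F)$-equivariant and proper modulo the parabolic action at infinity. In the Drinfeld case and its PEL generalizations this is provided by a retraction to the Bruhat-Tits building $\B(J_b,F)$, and the Siegel domains come out for free. For a general fully HN decomposable triple $(G,\mu,b)$ one would want an analogous retraction defined in terms of the HN slopes of $\E_{b,x}$ for $x$ near the boundary, together with a verification that it is compatible with the stratification $\partial\Fc(G,\mu,b)^a=\bigcup_P\partial_P\Fc(G,\mu,b)^a$. Once such a retraction is in place, both implications should reduce to classical reduction theory for $J_b(F)$ acting on its building; without it, the outline above remains schematic.
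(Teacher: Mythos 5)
This statement is stated in the paper as Conjecture \ref{conj:ad egal was ssi domaine fondamental compact} and is \emph{not proved there}: the authors only motivate it by the analogy with Siegel domains in hermitian symmetric spaces, observe that (2) can be reformulated at infinite level, and record that it is known in special cases (the PEL setting via integral semi-stability of finite flat group schemes, and $U(1,n-1)$ in \cite{Sh3}). Your proposal correctly identifies the intended picture --- replace (1) by full HN decomposability via Theorem \ref{theo:main theorem} and Corollary \ref{coro:fully HN dec ssi idem}, and exploit the parabolic induction of the boundary strata $\partial_P\Fc(G,\mu,b)^a$ --- but it does not close either implication, and you acknowledge as much when you write that the outline ``remains schematic'' without the retraction.

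Concretely, the gaps are these. For (1)$\Rightarrow$(2), the steps ``induction on the semisimple rank \ldots furnish a fundamental domain at the Levi level, which one pushes down through $\widetilde{J}_{b''}$ and thickens into a quasicompact collar'' are precisely the content that is missing in general: there is no known analogue, for an arbitrary fully HN decomposable triple, of the reduction theory via stability conditions that produces the Laffaille/Gross--Hopkins fundamental domain in the Lubin--Tate case. The HN stratification of $\partial\Fc(G,\mu,b)^a$ lives on the boundary, not on $\Fc(G,\mu,b)^a$ itself, and converting it into a $J_b(F)$-equivariant exhaustion of the open admissible locus by quasicompacts is exactly the unproved ``distance-to-boundary'' function you postulate. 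For (2)$\Rightarrow$(1), the assertion that a non-HN-decomposable stratum forces the $J_b(F)$-action on $\Fc(G,\mu,b)^a$ to have no quasicompact fundamental domain is not substantiated: you would need an actual mechanism producing, from a non-basic $[b']$ with $\mu^\diamond-[\nu_{b'}]\notin\langle\Phi^\vee_{0,M}\rangle_\Q$ for every admissible $M$, a sequence in the admissible locus escaping every set of the form $J_b(F).U$ with $U$ quasicompact, and no such construction is given. So the proposal is a reasonable strategy statement consistent with the paper's discussion, but it is not a proof, and the statement remains open.
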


Condition (2) is equivalent to the fact that there exists a quasi-compact open subset $V\subset \M (G,\mu,b)_\infty$ such that $J_b (F)\times G(F).V=\M(G,\mu,b)_\infty$. 
 Reduction theory and the construction of Siegel domains can be done via stability conditions in the Arakelov setting (\cite{Grayson}). In our setting this can be done in some particular cases, in the PEL case, using integral semi-stability conditions for finite flat group schemes (\cite{F5}, see in particular corollary 11 where the link is made with the Lafaille/Gross Hopkins fundamental domain (\cite{Laffaille}, \cite{GH}). For example, the preceding conjecture is solved in \cite{Sh3} for $U(1,n-1)$ using this technique and methods developed in \cite{F6}.
\\

Finally let us point out that the fact that the boundary of those spaces is parabolically induced has some cohomological consequences. In fact one can prove the following.

\begin{theorem}\label{theo: pas demontre}
If $\pi$ is a smooth representation with $\overline{\Q}_\ell$ coefficients of $G(F)$, $\mathcal{G}_\pi= \M(G,\mu,b)_\infty \times_{G(F)} \pi $ the corresponding pro-\'etale 
$J_b(F)$-equivariant 
 local system on $\Fc ( G,\mu,b)^{a}$ then
one has an isomorphism 
$$
R\Gamma_c \big ( \Fc (G,\mu,b)^a_{\C_p}/J_b (F), \mathcal{G}_\pi\big )_{cusp} \iso R\Gamma \big ( \Fc (G,\mu)_{\C_p} / J_b(F), \mathcal{G}_\pi )_{cusp}
$$
in the derived category of smooth representations of $J_b (F)$ with $\overline{\Q}_\ell$ coefficients.
\end{theorem}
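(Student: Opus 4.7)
The plan is to reduce the theorem via excision to a vanishing statement on the boundary and then apply a Hodge-Newton (Boyer) type decomposition stratum by stratum. Write $j:\Fc(G,\mu,b)^a\hookrightarrow\Fc(G,\mu)$ for the open immersion and $i$ for the complementary closed embedding. Interpreting $\mathcal{G}_\pi$ on the right-hand side as $Rj_*\mathcal{G}_\pi$, the natural map $j_!\mathcal{G}_\pi\to Rj_*\mathcal{G}_\pi$ sits in a $J_b(F)$-equivariant triangle whose cofiber $i_*i^*Rj_*\mathcal{G}_\pi$ is supported on the boundary. Taking $R\Gamma$ on the proper space $\Fc(G,\mu)$ and passing to smooth derived $J_b(F)$-coinvariants yields a distinguished triangle
\[
R\Gamma_c(\Fc(G,\mu,b)^a_{\C_p}/J_b(F),\mathcal{G}_\pi)\lra R\Gamma(\Fc(G,\mu)_{\C_p}/J_b(F),\mathcal{G}_\pi)\lra R\Gamma(\partial\Fc(G,\mu,b)^a_{\C_p}/J_b(F), i^*Rj_*\mathcal{G}_\pi)\xrightarrow{+1}
\]
of smooth $\overline{\Q}_\ell$-representations of $J_b(F)$, so it suffices to show that the third term is cuspidally zero.

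Next I would exploit the $J_b(F)$-invariant stratification $\partial\Fc(G,\mu,b)^a=\bigsqcup_{P\in\mathcal{P}}\partial_P\Fc(G,\mu,b)^a$ of the preceding proposition, where each stratum admits a $J_b(F)$-equivariant continuous map to $J_b(F)/P(F)$ for a proper parabolic $P\subset J_b$. A standard d\'evissage, via the spectral sequence attached to the induced closed filtration of the boundary, reduces the claim to showing, for each fixed $P\in\mathcal{P}$, that
\[
R\Gamma\bigl(\partial_P\Fc(G,\mu,b)^a_{\C_p}/J_b(F),\, i_P^*Rj_*\mathcal{G}_\pi\bigr)_{cusp}=0,
\]
where $i_P$ denotes the locally closed inclusion of the stratum.

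The heart of the argument is computing this single-stratum contribution via a generalized Hodge-Newton decomposition. Because $B(G,\mu)$ is fully HN decomposable, corollary \ref{coro:fully HN dec ssi idem} tells us that the element $[b'']\in B(J_b,\mu^{-1})$ attached to the stratum is itself HN decomposable along $P$ with Levi $M$; in particular $b$ admits a reduction $b_M$ to $M$, and the twin-towers diagram of Section \ref{sec:the HN stratification} identifies $\partial_P\Fc(G,\mu,b)^a$ with $\widetilde{J}_{b''}\backslash\M(J_b,\mu^{-1},b'')_\infty$, which fibers over $J_b(F)/P(F)$. Combining the Beauville-Laszlo description of $\E_{b,x}$ with the generalized Boyer trick in its local-Shtuka incarnation (following Mantovan, Shen, Hansen, Gaisin-Imai) identifies the stratum cohomology, as a smooth $J_b(F)$-module, with a parabolic induction $\mathrm{Ind}_{P(F)}^{J_b(F)}$ applied to a complex built from the Jacquet module $\pi_N$ of $\pi$ along the unipotent radical $N$ of $P$ together with the cohomology of the analogous, smaller admissible locus attached to $(M,\mu_M,b_M)$.

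For $\pi$ cuspidal one has $\pi_N=0$ for every proper parabolic, so the induced complex vanishes identically; reassembling over $P\in\mathcal{P}$ gives the cuspidal triviality of the boundary term, and the excision triangle delivers the required isomorphism. The main obstacle lies precisely in the third step: establishing a clean parabolic-induction formula for the cohomology of each HN stratum at infinite level in the minuscule setting for a general (possibly non-PEL) reductive $G$, while keeping track of the correct modulus twists. This should follow from the geometric content of Section \ref{sec:HN stratification of the flag variety} (twin-towers diagram, the $\widetilde{J}_{b''}$-torsor structure of $\pi_{HT}$, and the description of strata as quotients of local Shtuka moduli spaces) combined with the already available HN-decomposability results for such spaces.
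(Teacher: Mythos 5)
First, a caveat on the target: the paper itself does not prove this statement (its label is literally \emph{pas d\'emontr\'e}); it only observes that the isomorphism is immediate when $j_!\Fc_\pi=Rj_*\Fc_\pi$ on $\Bun_G$ (the ``clean'' case) and asserts that the general case follows from the boundary being parabolically induced. Your overall strategy --- excision for $j:\Fc(G,\mu,b)^a\hookrightarrow\Fc(G,\mu)$, d\'evissage over the stratification $\partial\Fc(G,\mu,b)^a=\bigcup_{P\in\mathcal{P}}\partial_P\Fc(G,\mu,b)^a$, and killing each stratum's contribution by a parabolic--induction structure --- is exactly the intended one, just carried out on the flag variety rather than on $\Bun_G$ via the Hecke transform of $j_!\Fc_\pi\to Rj_*\Fc_\pi$ as the paper packages it.

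There is, however, a genuine confusion in your final step. The theorem is for an \emph{arbitrary} smooth $\pi$, and the subscript ``cusp'' is the projection onto the cuspidal Bernstein blocks of the resulting complex \emph{as a representation of $J_b(F)$}. Your concluding paragraph instead assumes $\pi$ cuspidal as a $G(F)$-representation and argues via the vanishing of the Jacquet modules $\pi_N$; this proves nothing for non-cuspidal $\pi$ (where $\pi_N\neq 0$), and it conflates the $G(F)$- and $J_b(F)$-sides. The correct (and simpler) conclusion is: by the proposition of section \ref{sec:asymptotic}, each stratum $\partial_P\Fc(G,\mu,b)^a$ carries a $J_b(F)$-equivariant continuous map to the compact space $J_b(F)/P(F)$ with $P\subsetneq J_b$ a proper parabolic, so its cohomology is of the form $\mathrm{Ind}_{P(F)}^{J_b(F)}(-)$; every subquotient of such an induced representation has cuspidal support in a proper Levi of $J_b$, hence its cuspidal part vanishes \emph{whatever is being induced}. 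In particular the refined Boyer-trick formula you single out as the ``main obstacle'' --- expressing the stratum cohomology through $\pi_N$ and a smaller period domain for $(M,\mu_M,b_M)$ --- is not needed for this theorem; the equivariant fibration over $J_b(F)/P(F)$, which the paper has already established, suffices. With that substitution your excision argument goes through for all smooth $\pi$.
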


Let us give more details on the meaning of the preceding conjecture in the context of \cite{FS}. The representation $\pi$ defines a local system $\Fc_{\pi}$  on the smooth perfectoid $v$-stack
$$
\big [\, \Spa (\overline{\F}_q)/ \underline{G(F)}\, \big ] =\Bun_G^{0,ss},
$$
the semi-stable locus of the component $\{ c_1^G=0\}$  
 in the perfectoid $v$-stack $\Bun_G$. Let 
 $$
 j: \Bun_{G}^{ss,0}\hookrightarrow \Bun_G,
 $$
 an open immersion. Then according to \cite{FS}, $j_! \Fc_\pi$ is a reflexive sheaf. Consider the Hecke correspondence
 $$
 \begin{tikzcd}
  & \mathrm{Hecke}_\mu \ar[rd,"\overset{\rightarrow}{h}"] \ar[ld,"\overset{\leftarrow}{h}"'] \\
  \Bun_G & & \Bun_G\times \mathrm{Div}^1
 \end{tikzcd}
 $$
 where $\mathrm{Div}^1= \Spa (\breve{E})^\diamond /\ph^\Z$. 
Then, again according to \cite{FS}, the Hecke transform 
$$
 R\overset{\rightarrow}{h}_* \overset{\leftarrow}{h}^* j_!\Fc_\pi
$$
is again reflexive. Let 
$$
x_b: [\, \Spa (\overline{\F}_q ) / \underline{J_b (F)}\, \big ] = \Bun_G^{c_1^G= -\kappa (b),ss} \hookrightarrow \Bun_G.
$$
Then 
$$
x_b^* \big ( R\overset{\rightarrow}{h}_* \overset{\leftarrow}{h}^* j_! \Fc_\pi \big )= R\Gamma_c \big ( \Fc (G,\mu,b)^a_{\C_p}/J_b (F), \mathcal{G}_\pi\big )
$$
as an admissible representation of $J_b (F)$. Moreover  
$\mathbb{D} (j_! \Fc_\pi) = Rj_{*} \Fc_{\pi}$ (Verdier dual) and 
$$
x_b^* \big ( R\overset{\rightarrow}{h}_* \overset{\leftarrow}{h}^* Rj_* \Fc_\pi \big )= R\Gamma \big ( \Fc (G,\mu,b)^a_{\C_p}/J_b (F), \mathcal{G}_\pi\big )
$$
as an admissible representation. Now, if $\ph_\pi: W_E\longrightarrow \,^L G$ is the L-parameter of $\pi$, by definition {\it $\ph_\pi$ is cuspidal} if it is discrete ($S_{\ph_{\pi}}/ Z (\widehat{G})^\Gamma)$ is finite) and the image of the intertia by $\ph_\pi$ in $\widehat{G}$ is finite (i.e. the monodromy operator is trivial). 
When $G$, resp. $J_b$, is quasi-split, conjecturally, $\ph_{\pi}$ is cuspidal if and only if {\it all} elements of the L-packet of $\pi$, resp. the packet or representations of $J_b (F)$ associated to $\pi$ via generalized Jacquet-Langlands, are supercuspidal. Under this hypothesis {\it $\Fc_{\pi}$ is clean}
$$
j_! \Fc_{\pi} = Rj_* \Fc_{\pi}.
$$
In this case theorem \ref{theo: pas demontre} is thus immediate. 
Nevertheless it may happen that one element of the L-packet is supercuspidal and not the other one in which case $\Fc_{\pi}$ may not be clean anymore. {\it 
Theorem \ref{theo: pas demontre} then says that  even if this cleanliness hypothesis is not satisfied, the cohomological consequence for the associated basic RZ spaces is satisfied if $B(G,\mu)$ is fully HN decomposable.}

%
%

\end{document}